\newtheorem{assumption}{Assumption}
\newtheorem{thm}{Theorem}
\newtheorem{prop}{Proposition}
\newtheorem{rmk}{Remark} 
\newcommand{\mG}{\mathcal{G}}
\newcommand{\mF}{\mathcal{F}}
\newcommand{\ve}{\varepsilon}
\newcommand{\ep}{\epsilon}
\newcommand{\sg}{\sigma}
\newcommand{\RR}{\mathbb{R}}
\newcommand{\mO}{\mathcal{O}}
\newcommand{\mL}{\mathcal{L}}
\newcommand{\mT}{\mathcal{T}}
\newcommand{\PP}{\mathbb{P}}
\newcommand{\EE}{\mathbb{E}}
\newcommand{\tp}{^{\top}}
\newcommand{\ptp}{^{\prime \top}}
\newcommand{\rd}{\,\mathrm{d}}
\DeclareMathOperator{\Tr}{Tr}
\begin{document}

\title{Single Timescale Actor-Critic Method to Solve the Linear Quadratic Regulator with Convergence Guarantees}

\author{\name Mo Zhou \email mo.zhou366@duke.edu \\
       \addr Department of Mathematics\\
       Duke University\\
       Durham, NC 27708, USA
       \AND
       \name Jianfeng Lu \email jianfeng@math.duke.edu \\
       \addr Department of Mathematics, Department of Physics, and Department of Chemistry\\
       Duke University\\
       Durham, NC 27708, USA}
\editor{Kevin Murphy and Bernhard Sch{\"o}lkopf}

\maketitle

\begin{abstract}
We propose a single timescale actor-critic algorithm to solve the linear quadratic regulator (LQR) problem. A least squares temporal difference (LSTD) method is applied to the critic and a natural policy gradient method is used for the actor. We give a proof of convergence with sample complexity $\mO(\ve^{-1} \log(\ve^{-1})^2)$. The method in the proof is applicable to general single timescale bilevel optimization problems. We also numerically validate our theoretical results on the convergence.
\end{abstract}

\begin{keywords}
  linear quadratic regulator, actor-critic, reinforcement learning, single timescale
\end{keywords}

\section{Introduction}
Reinforcement learning (RL) is a semi-supervised learning model that learns to take actions and interact with the environment in order to maximize the expected reward \citep{sutton2018reinforcement}. It has a wide range of applications, including robotics \citep{kober2013reinforcement}, traditional games \citep{silver2016mastering}, and traffic light control \citep{wiering2000multi}. RL is closely related to the optimal control problem \citep{bertsekas2019reinforcement}, where one usually minimizes the expected cost instead of maximizing the reward. Among all the control problems, the LQR \citep{anderson2007optimal} is the cleanest setup to analyze theoretically and has many applications \citep{hashim2019optimal, ebrahim2010application}.
Many research has been devoted to LQR. Early research mostly focused on model-based methods, such as deriving the explicit solution of the LQR with known dynamics. This research showed that the optimal control is a linear function of the state and the coefficient can be obtained by solving the Riccati equation \citep{anderson2007optimal}. Recent research focuses more on the model-free setting in the context of RL, where the algorithm does not know the dynamic and has only observations of states and rewards \citep{tu2018least, mohammadi2021convergence}.

The actor-critic method \citep{konda2000actor} is a class of algorithms that solve the RL or optimal control problems through alternately updating the actor and the critic. In this framework, we solve for both the control and the value function, which is the expected cost w.r.t. the initial state (and action). The control is known as the actor, so in the actor update, we improve the control in order to minimize the cost; i.e., policy improvement.  The value function is known as the critic. Hence, in the critic update, we evaluate a fixed control through computing the value function; i.e., policy evaluation. 

On a broader scale, the actor-critic method belongs to the bilevel optimization problem \citep{sinha2017review, bard2013practical}, as it is an optimization problem (higher-level problem) whose constraint is another optimization problem (lower-level problem). In the actor-critic method, the higher-level problem is to minimize the cost (the actor) and the lower-level problem is to let the critic be equal to value function corresponding to the control, which is equivalent to minimizing the expected squared Bellman residual \citep{bradtke1996linear}. The major difficulty of a bilevel optimization problem is that when the lower-level problem is not solved exactly, the error could propagate to the higher-level problem and accumulate in the algorithm. One approach to overcome this problem is the two timescale method \citep{konda2000actor, wu2020finite, zeng2021two}, where the update of lower-level problem is in a time scale that is much faster than the higher-level one. This method suffers from high computational costs because of the lower-level optimization. Another method is to modify the update direction to improve accuracy \citep{kakade2001natural}, which also introduces extra cost. In order to reduce the cost, we seek an efficient single timescale method to solve LQR.

\subsection{Our contributions}
In this paper, we consider a single timescale actor-critic algorithm to solve the LQR problem. We apply an LSTD method \citep{bradtke1996linear} for the critic and a natural policy gradient method \citep{kakade2001natural} for the actor. For the critic, we derive an explicit expression for the gradient and design a sample method with the desired accuracy. For the actor, we apply a natural policy gradient method borrowed from \cite{fazel2018global}. We give a proof of convergence with sample complexity $\mO(\ve^{-1} \log(\ve^{-1})^2)$ to achieve an $\ve$-optimal solution. To the best of our knowledge, our work is the first single timescale actor-critic method to solve the LQR problem with provable guarantees.

Our work not only solves the specific LQR problem but also advances the study of convergence for single timescale bilevel optimization. In our proof of convergence, we construct a Lyapunov function that involves both the critic error and the actor loss. We show that there is a contraction of the Lyapunov function in the algorithm. If we consider the actor and the critic separately, the critic error becomes an issue when we want to show an improvement of the actor and vice versa. Therefore, the higher and lower level problems have to be analyzed simultaneously for a single timescale algorithm. 

\subsection{Related works}
Let us compare our work with related ones in the literature.
Perhaps the most closely related work to ours is by \cite{fu2020single}. They consider a single timescale actor-critic method to solve the optimal control problem with discrete state and action spaces, while we solve the LQR problem with continuous state and action spaces. They add an entropy regularization in the loss function and achieve a sample complexity of $\mO(\ve^{-2})$ with linear parameterization. 

For two timescale approaches, \cite{yang2019global} study a two timescale actor-critic algorithm to solve the LQR problem in continuous space. They also use a natural policy gradient method for the actor \citep{fazel2018global}. For the critic, they reformulate policy evaluation into a minimax optimization problem using Fenchel's duality. Several critic steps are performed between two actor steps and their final sample complexity is $\mO(\ve^{-5})$. \cite{zeng2021two} study a bilevel optimization problem that is applied to a two timescale actor-critic algorithm on LQR. They obtain a complexity of $\mO(\ve^{-3/2})$. They have assumed strong convexity of the higher-level loss function (actor) while our analysis does not require such assumptions.

Besides model-free approaches, another way to solve the LQR problem is to first learn the model through the system identification approach and then solve the model-based LQR. For example, \cite{dean2020sample} use a least square system identification approach to learn the model parameter and then solve the LQR, with sample complexity $\mO(\ve^{-2})$.

As can be seen from the above discussions, our single timescale algorithm achieves a lower sample complexity $\mO(\ve^{-1} \log(\ve^{-1})^2)$, which is an improvement over previously proposed algorithms.

For the general bilevel optimization problem, we refer the reader to \cite{chen2022single}, where the authors summarize the existing bilevel algorithms and propose a STABLE method with $\mO(\ve^{-1})$ sample complexity under strong convexity assumption.

\smallskip

The rest of this paper is organized as follows. In Section \ref{sec:background}, we introduce the theoretical background of the LQR problem. In Section \ref{sec:algorithm}, we describe the algorithm for the LQR problem and our choice of parameters. In Section \ref{sec:proof}, we give the outline of the convergence proof of the algorithm, with proof details in the appendix. The numerical examples are also deferred to the appendix. 

\section{Theoretical background}\label{sec:background}
First, we clarify some notations.
We use $\|\cdot\|$ to denote the operator norm of a matrix and $\|\cdot\|_F$ to denote the Frobenius norm of a matrix. When we write $M \ge c$ where $M$ is a symmetric matrix and $c$ is a number, we mean $M - c I$ is positive semi-definite. Similarly, $M>c$ means $M - c I$ is positive definite.

We consider a discrete-time Markov process $\{x_s\}$ on a filtered probability space $(\Omega, \mF, \{\mF_s\}, \PP)$:
\begin{equation*}
    x_{s+1} = A x_s + B u_s + \xi_s,
\end{equation*}
where $x_s \in \RR^d$ is an adapted state process, $u_s \in \RR^k$ is the adapted control process, $A \in \RR^{d \times d}$ and $B \in \RR^{d \times k}$ are two fixed matrices. $\xi_s \sim N(0, D_{\xi})$ is independent noise.
The initial state $x_0 \sim \rho_0$, with some initial distribution $\rho_0$. 

The goal is to minimize the infinite horizon cost functional
\begin{equation}\label{eq:cost}
    J(\{u_s\}) = \lim_{S \to \infty} \EE \left[ \dfrac{1}{S} \sum_{s=0}^{S-1} c(x_s, u_s) \right],
\end{equation}
where $c(x, u) = x\tp Q x + u\tp R u$ is the one-step cost, with $Q \in \RR^{d \times d}$ and $R \in \RR^{k \times k}$ being positive definite. Theoretical results guarantee that the optimal control $u^*$ is linear in $x$: $u^*_s = - K^* x_s$.
If the model is known, we can obtain the optimal control parameter by $K^* = (R + B\tp P^*B)^{-1} B\tp P^* A$ where $P^*$ is the solution to the Riccati equation  \citep{anderson2007optimal}
\begin{equation}\label{eq:Riccati}
    P^* = Q + A\tp P^* A - A\tp P^* B (R + B\tp P^* B)^{-1} B\tp P^* A.
\end{equation}
In this work, we consider the model-free setting (i.e., the algorithm does not have access to $A$, $B$, $D_{\xi}$, $Q$, $R$). We will use a stochastic policy parametrized as 
\begin{equation}\label{eq:policy}
    u_s \sim \pi_K := N(-K x_s, \sigma^2 I_k)
\end{equation}
to encourage exploration, where $\sigma > 0$ is a fixed constant. Here, we use $\pi_K$ to denote the distribution while we will not distinguish in notation a probability distribution with its density. We remark that adding exploration does not change the optimal $K^*$ because the optimal policy parameters with or without exploration satisfy the same Riccati equation while adding exploration would help the convergence of the algorithm. Under this policy, the cost functional \eqref{eq:cost} is also denoted by $J(K)$  and the state trajectory can be rewritten as
\begin{equation*}
    x_{s+1} = A x_s + B (-K x_s + \sigma \omega_s) + \xi_s =: (A -BK) x_s + \ep_s
\end{equation*}
where $\omega_s \sim N(0, I_k)$ and $\ep_s \sim N(0, D_{\ep})$ with $D_{\ep} = D_{\xi} + \sg^2 BB\tp$ being positive definite. Let $\rho(\cdot)$ denote the spectral radius of a matrix. When $\rho(A-BK) < 1$, the state process has a stationary distribution $N(0, D_K)$, where $D_K \in \RR^{d \times d}$ satisfies the Lyapunov equation
\begin{equation}\label{eq:DK}
    D_K = D_{\ep} + (A-BK) D_K (A-BK)\tp.
\end{equation}
In order to understand \eqref{eq:DK}, let us assume that $x \sim N(0, D_K)$ follows the stationary distribution. Then, $x' = (A -BK) x + \ep \sim N(0, (A-BK) D_K (A-BK)\tp + D_{\ep})$  also follows the stationary distribution, which leads to \eqref{eq:DK}. $D_K$ can also be expressed in terms of a series: since $\rho(A-BK) < 1$, we can recursively plug in the definition of $D_K$ into the right hand side of \eqref{eq:DK} and obtain
\begin{equation}\label{eq:DK_series}
    D_K = \sum_{s=0}^{\infty} (A-BK)^s D_{\ep} ((A-BK)\tp)^s.
\end{equation}
From here on, the notation $\EE_{K}$ means the expectation with $x$ (or $x_0$) $\sim N(0, D_K)$ if not specified and $u$ (or $u_s$) $\sim \pi_K$.
The state-action value function (Q function) and the state value function with respect to a control $\{u_s\}$ are defined by 
\begin{equation}\label{eq:value}
\begin{aligned}
    Q(x, u) &= \sum_{s=0}^{\infty} \left( \EE \left[ c(x_s, u_s) ~|~ x_0=x, u_0=u \right] - J(\{u_s\}) \right)\\
    V(x) &= \sum_{s=0}^{\infty} \left( \EE \left[ c(x_s, u_s) ~|~ x_0=x \right] - J(\{u_s\}) \right) = \EE_{u} \left[ Q(x,u) \right]
\end{aligned}
\end{equation}
respectively. $V(x)$ is the expected extra cost if we start at $x_0=x$ and follow a given policy. $Q(x,u)$ is the expected extra cost if we start at $x_0=x$, take the first action $u_0=u$, and then follow a given policy. These two functions are crucial in reinforcement learning. If the policy $\pi_K$ follows \eqref{eq:policy}, then the two functions in \eqref{eq:value} are denoted by $Q_K(x,u)$ and $V_K(x)$ respectively. By definition, for any $x$ and $u$, it satisfies the Bellman equation:
\begin{equation}\label{eq:Bellman_eqn}
    Q_K(x,u) = c(x,u) - J(K) + \EE_K\bigl[ Q_K(x',u') \mid x, u \bigr],
\end{equation}
where $(x', u')$ is the next state-action pair starting from $(x, u)$. 

We define $P_K$ as the solution to the following matrix valued equation 
\begin{equation}\label{eq:PK}
    P_K = (Q + K\tp R K) + (A-BK)\tp P_K (A-BK).
\end{equation}
$P_K$ can be interpreted as the second order adjoint state, and $P_K x_t$ is the shadow price for the system (see for example \cite{yong1999stochastic}).
We have the following two properties to illustrate the importance of $P_K$. The proofs are deferred to the appendix.
\begin{prop}\label{prop:cost_expression}
Let the policy $\pi_K$ be defined by \eqref{eq:policy} with $\rho(A - BK) < 1$. Then the cost function and its gradient w.r.t. $K$ have the following explicit expressions:
\begin{align}\label{eq:JK}
    J(K) & = \Tr(D_{\ep} P_K) + \sigma^2 \Tr(R), \\
\label{eq:grad_JK}
    \nabla_K J(K) & = 2 \left[ (R + B\tp P_K B)K - B\tp P_K A \right] D_K.
\end{align}
\end{prop}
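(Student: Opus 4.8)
The plan is to compute both quantities by exploiting stationarity and the algebraic duality between the two discrete Lyapunov equations \eqref{eq:DK} and \eqref{eq:PK}. First I would reduce the infinite-horizon average cost to a single-step expectation. Since $\rho(A-BK)<1$, the chain $\{x_s\}$ converges geometrically to its stationary law $N(0,D_K)$, so the Ces\`aro average in \eqref{eq:cost} equals the stationary one-step cost and $J(K)=\EE_K[c(x,u)]$ with $x\sim N(0,D_K)$, $u\sim\pi_K$. Writing $u=-Kx+\sg\omega$ with $\omega\sim N(0,I_k)$ independent of $x$ and using the Gaussian second moments $\EE[xx\tp]=D_K$ and $\EE[\omega\omega\tp]=I_k$, the quadratic cost gives $\EE_K[x\tp Q x]=\Tr(QD_K)$ and $\EE_K[u\tp R u]=\Tr(K\tp R K D_K)+\sg^2\Tr(R)$, hence
\[
J(K)=\Tr\bigl((Q+K\tp R K)D_K\bigr)+\sg^2\Tr(R).
\]

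To match \eqref{eq:JK} I would then establish the elementary duality lemma: if $P=M+(A-BK)\tp P (A-BK)$ and $D=N+(A-BK)D(A-BK)\tp$, then $\Tr(MD)=\Tr(NP)$. This follows in one line by substituting $M=P-(A-BK)\tp P(A-BK)$ into $\Tr(MD)$ and using cyclicity to recognize the same quantity obtained symmetrically from $N=D-(A-BK)D(A-BK)\tp$. Applying it with $M=Q+K\tp R K$, $N=D_{\ep}$, $P=P_K$, $D=D_K$ yields $\Tr((Q+K\tp R K)D_K)=\Tr(D_{\ep} P_K)$, which converts the expression above into \eqref{eq:JK}.

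For the gradient I would differentiate the form $J(K)=\Tr((Q+K\tp R K)D_K)+\sg^2\Tr(R)$ in $K$, tracking the explicit dependence and the implicit one through $D_K$. Differentiating $K\tp R K$ and using symmetry of $R$ and $D_K$ contributes $2RKD_K$. The implicit part is $\Tr\bigl((Q+K\tp RK)\,\rd D_K\bigr)$; differentiating \eqref{eq:DK} shows $\rd D_K$ solves the \emph{same} Lyapunov recursion with source $N=(\rd(A-BK))D_K(A-BK)\tp+(A-BK)D_K(\rd(A-BK))\tp$ where $\rd(A-BK)=-B\,\rd K$. Invoking the duality lemma once more (with $M=Q+K\tp RK$, $P=P_K$, $D=\rd D_K$) turns this into $\Tr(P_K N)$; the two resulting trace terms are equal by transposition and symmetry of $P_K,D_K$, so $\Tr((Q+K\tp RK)\rd D_K)=-2\,\Tr\bigl(B\tp P_K(A-BK)D_K\,\rd K\bigr)$, contributing $-2B\tp P_K(A-BK)D_K$. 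Collecting both terms gives $\nabla_K J(K)=2RKD_K-2B\tp P_K(A-BK)D_K=2[(R+B\tp P_K B)K-B\tp P_K A]D_K$, which is \eqref{eq:grad_JK}.

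I expect the implicit differentiation through $D_K$ to be the main obstacle: the differential $\rd D_K$ has no convenient closed form, and computing $\Tr((Q+K\tp RK)\rd D_K)$ directly would require solving a Sylvester-type equation. The duality identity is exactly what circumvents this, replacing $\rd D_K$ by the fixed matrix $P_K$; getting the trace bookkeeping and the symmetry argument right—so that the two mirror-image terms collapse into the single factor $-2B\tp P_K(A-BK)D_K$—is the delicate step. A secondary technical point is justifying convergence to stationarity and the interchange of differentiation with the limit and expectation, both of which rest on $\rho(A-BK)<1$ and the geometric decay exhibited in \eqref{eq:DK_series}.
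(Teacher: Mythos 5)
Your proposal is correct, and the first half (the formula \eqref{eq:JK}) follows exactly the paper's route: reduce the Ces\`aro average to the stationary one-step expectation to get $J(K)=\Tr((Q+K\tp RK)D_K)+\sigma^2\Tr(R)$, then swap $(Q+K\tp RK,D_K)$ for $(D_{\ep},P_K)$ via the two Lyapunov equations \eqref{eq:DK} and \eqref{eq:PK} and cyclicity of the trace. Where you genuinely diverge is the gradient \eqref{eq:grad_JK}. The paper handles the implicit dependence of $D_K$ on $K$ by repeatedly substituting the recursion \eqref{eq:DK} into $\nabla_K\Tr[D_KQ_0]$ (see \eqref{eq:recursion}--\eqref{eq:recursion2}), peeling off one term $-2B\tp((A-BK)\tp)^s(Q+K\tp RK)(A-BK)^{s+1}D_K$ at each stage and summing the resulting geometric series to recognize $-2B\tp P_K(A-BK)D_K$ via \eqref{eq:PK_series}. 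You instead differentiate \eqref{eq:DK} implicitly, observe that $\rd D_K$ solves the same Lyapunov recursion with source $N=-B\,\rd K\,D_K(A-BK)\tp-(A-BK)D_K\,\rd K\tp B\tp$, and apply your duality identity a second time to convert $\Tr((Q+K\tp RK)\,\rd D_K)$ into $\Tr(P_KN)$; the two mirror terms then coincide by symmetry of $P_K$ and $D_K$. Both computations are valid under $\rho(A-BK)<1$ (which you correctly flag as what justifies convergence of the series, differentiability of $D_K$, and the vanishing remainder). Your adjoint-state version is arguably cleaner and reuses the same lemma twice rather than introducing a separate series manipulation, at the small cost of having to justify differentiability of $K\mapsto D_K$, which the paper's term-by-term unrolling sidesteps.
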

\begin{rmk}
In the LQR problem, we usually assume that $D_K$ is positive definite and hence invertible. Therefore, the critical point for $J(K)$ (i.e., when $\nabla_K J(K)=0$) satisfies $K = (R + B\tp P_KB)^{-1} B\tp P_K A$. If we substitute this into \eqref{eq:PK}, we recover the Riccati equation \eqref{eq:Riccati}.
\end{rmk}

\begin{prop}\label{prop:value_expression}
Let the policy $\pi_K$ be defined by \eqref{eq:policy} with $\rho(A - BK) < 1$. Then the value functions have the following explicit expressions:
\begin{equation*}
    V_K(x) = x\tp P_K x - \Tr(D_K P_K),
\end{equation*}
\begin{multline}\label{eq:Q_fcn}
    Q_K(x,u) = \begin{bmatrix}  x\tp & u\tp \end{bmatrix} \begin{bmatrix} Q + A\tp P_K A & A\tp P_K B \\ B\tp P_K A & R + B\tp P_K B \end{bmatrix} \begin{bmatrix}  x \\ u \end{bmatrix} \\ - \sigma^2 \Tr(R + P_K BB\tp) - \Tr(D_K P_K).
\end{multline}
\end{prop}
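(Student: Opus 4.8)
The plan is to compute $V_K$ first and then obtain $Q_K$ from it via a single Bellman step. Throughout I write $L := A - BK$ and $M := Q + K\tp R K$, and I use that $\rho(L) < 1$ makes every series below converge geometrically, so both functions in \eqref{eq:value} are finite and well defined.

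For $V_K$ I would work directly from the series. Conditioned on $x_0 = x$, the state is $x_s = L^s x + \sum_{j=0}^{s-1} L^{s-1-j}\ep_j$, so $\EE[x_s \mid x_0 = x] = L^s x$ with covariance $\sum_{i=0}^{s-1} L^i D_\ep (L\tp)^i$. Since $\EE_u[c(x,u)] = x\tp M x + \sg^2\Tr(R)$, taking the conditional expectation and subtracting $J(K) = \Tr(D_\ep P_K) + \sg^2\Tr(R) = \Tr(M D_K) + \sg^2\Tr(R)$ (the last equality from \eqref{eq:JK} together with the series form of $D_K$) cancels the $\sg^2\Tr(R)$ terms and leaves, for each $s$, a quadratic term $x\tp (L\tp)^s M L^s x$ plus a trace term independent of $x$. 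Summing over $s$ and recognizing $P_K = \sum_{s\ge 0}(L\tp)^s M L^s$ (the iterate of \eqref{eq:PK}) shows the $x$-dependent part is exactly $x\tp P_K x$; hence $V_K(x) = x\tp P_K x + c$ for some constant $c$. To fix $c$ without rearranging an awkward double sum, I would use that under the stationary law $x \sim N(0,D_K)$ one has $\EE[c(x_s,u_s)] = J(K)$ for every $s$, so $\EE_{x\sim N(0,D_K)}[V_K(x)] = 0$; since $\EE_{N(0,D_K)}[x\tp P_K x] = \Tr(D_K P_K)$, this forces $c = -\Tr(D_K P_K)$, which is the claimed formula.

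For $Q_K$ I would isolate the $s=0$ term $c(x,u) - J(K)$ in the series and note that, conditioned on $(x_0,u_0) = (x,u)$, the remaining tail is exactly $V_K$ evaluated at the next state $x_1 = Ax + Bu + \xi$ with $\xi \sim N(0,D_\xi)$; this is the value-function form of the Bellman equation \eqref{eq:Bellman_eqn} and gives $Q_K(x,u) = c(x,u) - J(K) + \EE_\xi[V_K(Ax+Bu+\xi)]$. Substituting $V_K(y) = y\tp P_K y - \Tr(D_K P_K)$ and using $\EE[\xi\xi\tp] = D_\xi$ yields $(Ax+Bu)\tp P_K (Ax+Bu) + \Tr(P_K D_\xi) - \Tr(D_K P_K)$ for the expectation. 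Expanding $(Ax+Bu)\tp P_K(Ax+Bu)$ and adding $c(x,u) = x\tp Q x + u\tp R u$ produces exactly the block quadratic form in \eqref{eq:Q_fcn}, while the constant is $-J(K) + \Tr(P_K D_\xi) - \Tr(D_K P_K)$. I would then simplify this constant with $D_\ep = D_\xi + \sg^2 BB\tp$ and $J(K) = \Tr(D_\ep P_K) + \sg^2\Tr(R)$: the $\Tr(D_\xi P_K)$ contributions cancel and what remains is $-\sg^2\Tr(R) - \sg^2\Tr(P_K BB\tp) = -\sg^2\Tr(R + P_K BB\tp)$, matching \eqref{eq:Q_fcn}.

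The only genuine obstacle is the constant-term bookkeeping. Establishing that $V_K$ is quadratic at all is cleanest via the series rather than a guessed ansatz, and its constant is where one is tempted to rearrange the double sum $\sum_s \sum_{j\ge s}$; the stationarity normalization $\EE_{N(0,D_K)}[V_K] = 0$ sidesteps that entirely. For $Q_K$ the corresponding care is in tracking how $\sg^2 BB\tp$ splits off from $D_\ep$ so that the $\xi$-only covariance $D_\xi$ appears in the Bellman step, and getting this split right is precisely what makes the $-\sg^2\Tr(R + P_K BB\tp)$ term come out correctly.
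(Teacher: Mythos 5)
Your proposal is correct, and its overall skeleton matches the paper's proof: both obtain the quadratic part of $V_K$ from the conditional law $x_s \sim N\left((A-BK)^s x,\ \sum_{i=0}^{s-1}(A-BK)^i D_{\ep}((A-BK)\tp)^i\right)$ and the series form of $P_K$, and both derive $Q_K$ via the one-step Bellman identity $Q_K(x,u)=c(x,u)-J(K)+\EE[V_K(Ax+Bu+\xi)]$ with $\xi\sim N(0,D_{\xi})$, using $D_{\ep}=D_{\xi}+\sigma^2BB\tp$ to produce the $-\sigma^2\Tr(R+P_KBB\tp)$ constant. The one genuine difference is how the constant $-\Tr(D_KP_K)$ in $V_K$ is pinned down. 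The paper computes it head-on: it writes the constant as $\sum_{s}\Tr\bigl[(D_K^{(s)}-D_K)(Q+K\tp RK)\bigr]$, exchanges the resulting double sum $\sum_s\sum_{i\ge s}$, and recombines the series for $D_K$ and $P_K$. You instead observe that $V_K$ must integrate to zero against the stationary law $N(0,D_K)$ (since $\EE_K[c(x_s,u_s)]=J(K)$ for every $s$ under stationarity), which forces the constant to equal $-\EE_{N(0,D_K)}[x\tp P_Kx]=-\Tr(D_KP_K)$. Your normalization trick is cleaner and replaces the most tedious computation in the paper's argument by a one-line identity; its only cost is that you must justify interchanging $\EE_{N(0,D_K)}$ with the infinite sum defining $V_K$, but the summands are dominated by $C\rho'^{2s}(1+|x|^2)$ for some $\rho'<1$, so this is no harder than the Fubini step the paper's double-sum rearrangement already requires.
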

If we concatenate $x$ and $u$ in the dynamic equation, the process can be written as 
\begin{equation*}
    \begin{bmatrix}  x_{s+1} \\ u_{s+1} \end{bmatrix} = \begin{bmatrix}  A&B \\ -KA & -KB \end{bmatrix} \begin{bmatrix}  x_s \\ u_s \end{bmatrix} + \begin{bmatrix}  \xi_s \\ -K \xi_s + \sigma \omega_s \end{bmatrix}.
\end{equation*}
We simplify the expression by introducing some new notations: $z_s = [x_s\tp, u_s\tp]\tp$, thus $z_{s+1} = E z_s + \widetilde{\ep}_s$,
where
\begin{equation}\label{eq:def_concat}
    E = \begin{bmatrix}  A&B \\ -KA & -KB \end{bmatrix},~ \text{and} ~~ \widetilde{\ep}_s \sim N(0, \Sigma_{\ep}) := N\left(0,  \begin{bmatrix}  D_{\xi} & -D_{\xi} K\tp \\ -K D_{\xi} & K D_{\xi} K\tp + \sigma^2 I_k \end{bmatrix} \right).
\end{equation}
The ergodicity of the dynamics is guaranteed if $\rho(A - BK) = \rho(E) < 1$, where the identity $\rho(A - BK) = \rho(E)$ can be verified from 
$$\rho(E) = \rho\left( \begin{bmatrix}  I_d \\ -K \end{bmatrix} \begin{bmatrix}  A & B \end{bmatrix} \right) = \rho\left( \begin{bmatrix}  A & B \end{bmatrix} \begin{bmatrix}  I_d \\ -K \end{bmatrix}  \right) = \rho(A - BK).$$
The stationary distribution for $z$ is given by  
\begin{equation}\label{eq:stationary_z}
    z \sim N(0, \Sigma_K) := N\left( 0, \begin{bmatrix}  D_{K} & - D_{K} K\tp \\ - K D_{K} & K  D_{K} K\tp + \sigma^2 I_k \end{bmatrix} \right) 
\end{equation}
and we have $\Sigma_K = \Sigma_{\ep} + E \Sigma_K E\tp$.

\section{The actor-critic algorithm}\label{sec:algorithm}
In this section, we present our specific design of the algorithm under the actor-critic framework. We apply an LSTD method for the policy evaluation (critic), with a detailed description for sampling the gradient of the loss function. We also use a natural policy gradient method for the policy improvement (actor). We will use $\mG_t$ to denote the filtration generated by the training process.  We use $\mO(a)$ to denote a quantity that is is bounded by a constant times $a$, where this constant only depends on the problem setting ($A$, $B$, $D_{\ep}$, $Q$, $R$, $\sigma$) and does not depend on the target accuracy or training trajectory. The dependence of the constants on the dimensions is explained in the proof of our theorem.

\subsection{Policy evaluation for the critic} \label{sec:critic_alg}
In this subsection, we describe the policy evaluation algorithm for a fixed policy $\pi_K$.
We parametrize the state-action value function by $Q^{\theta}_K$ with $\theta$ as a parameter and subscript $K$ indicating that it depends on the given policy $\pi_K$. We define the Bellman residual w.r.t. the critic parameter $\theta$ as
\begin{equation*}
    \textsf{BR}_{\theta}(x, u) = c(x,u) - J(K) + \EE_K\left[ Q^{\theta}_K(x',u') | x, u \right] - Q^{\theta}_K(x,u).
\end{equation*}
Recall the exact $Q$ function is given by \eqref{eq:Q_fcn}, accordingly, we define a feature matrix
\begin{equation}\label{eq:feature}
    \phi(x, u) = \begin{bmatrix}  x \\ u \end{bmatrix} \begin{bmatrix}  x\tp & u\tp \end{bmatrix} \in \RR^{(d+k) \times (d+k)}
\end{equation}
and parametrize the $Q$ function as
\begin{equation}\label{eq:Q_para}
    Q^{\theta}_K (x, u) = \Tr(\phi(x, u) \theta) - \theta',
\end{equation}
where $\theta \in \RR^{(d+k) \times (d+k)}$ and $\theta' \in \RR$. Here, we denote 
\begin{equation}\label{eq:theta_K}
    \theta = \begin{bmatrix}  \theta^{11} & \theta^{12} \\ \theta^{21} & \theta^{22} \end{bmatrix}, ~~ \text{which intends to approximate} ~~ \theta_K = \begin{bmatrix} Q + A\tp P_K A & A\tp P_K B \\ B\tp P_K A & R + B\tp P_K B \end{bmatrix}.
\end{equation}
The scalar parameter $\theta'$ is to approximate $\sigma^2 \Tr(R + P_K BB\tp) + \Tr(D_K P_K)$.
Recall the Bellman equation \eqref{eq:Bellman_eqn}, with parametrization \eqref{eq:Q_para}, the Bellman residual is written as 
\begin{multline*}
    \text{BR}_{\theta}(x, u) = c(x,u) - J(K) +  \langle \EE_K \left[ \phi(x', u') | x, u \right] - \phi(x,u), \theta \rangle \\
    =: c(x,u) - J(K) +  \langle \psi(x,u), \theta \rangle,
\end{multline*}
where $\langle \cdot , \cdot \rangle$ is the trace inner product and we have defined $\psi(x, u) := \EE_K\left[ \phi(x', u') | x, u \right] - \phi(x,u)$ for convenience. It is clear by definition that $\EE_K[\psi(x,u)] = 0$ (recall that $x$ follows the stationary distribution $N(0, D_K)$). The loss function for critic is then defined as the expectation of squared Bellman residual:
\begin{equation}\label{eq:critic_loss}
    L_K(\theta) = \frac12 \EE_K \left[\text{BR}_{\theta}(x, u)^2 \right] = \frac12 \EE_K \left[ \left( c(x,u) - J(K) +  \langle \psi(x,u), \theta \rangle \right)^2 \right].
\end{equation}
We will find that $\theta'$ does not affect the training, so only $\theta$ will be considered as the critic parameter from now on. According to the Bellman equation \eqref{eq:Bellman_eqn}, the unique minimizer of \eqref{eq:critic_loss} is the true parameter for the $Q$ function w.r.t. $\pi_K$. By direct computation, the gradient (as a matrix) and Hessian (as a tensor) of the loss function w.r.t. $\theta$ are
\begin{equation}\label{eq:critic_grad}
\begin{aligned}
    \nabla L_K(\theta) &= \EE_K \left[ \left( c(x,u) - J(K) +  \langle \psi(x,u), \theta \rangle \right) \psi(x, u) \right] \\
    &= \EE_K \left[ \left( c(x,u) +  \langle \psi(x,u), \theta \rangle \right) \psi(x, u) \right]
    \end{aligned}
\end{equation}
and
\begin{equation*}
    \nabla^2 L_K(\theta) =  \EE_K \left[ \psi(x, u) \otimes \psi(x, u) \right],
\end{equation*}
where $\otimes$ denotes the tensor product. The loss function $L_K$ is strongly convex in $\theta$, as will be shown later.

To minimize the loss \eqref{eq:critic_loss}, we use stochastic gradient descent method. Thus, we need an accurate sample estimate of $\nabla L_K(\theta)$ for given $K$ and $\theta$.  For simplicity of notation, we denote
\begin{equation}\label{eq:f}
    f(x,u) := \left( c(x,u) +  \langle \psi(x,u), \theta \rangle \right) \psi(x, u) = c(x,u) \psi(x, u) + (\psi(x, u) \otimes \psi(x, u)) \cdot \theta
\end{equation}
so that $\nabla L_K(\theta) = \EE_K[f(x,u)]$. Note that $f(x,u)$ depends on $\theta$ and $K$, while we suppress that in the notation. We decompose the sampling into two steps: the first step is to obtain $x, u$ that approximately follows the stationary distribution $N(0, \Sigma_K)$ and the second one is to sample $f(x,u)$ for given $x, u$.

For the first step, we use the Markov chain Monte Carlo (MCMC) method \citep{gilks1995markov}. Let $N_0$ and $N$ be two integers that will be determined according to the error tolerance. Starting at $x_0=0$, we sample $N$ independent trajectories of length $N_0+1$ according to the policy $\pi_K$. So, we obtain $N$ samples $\{(x^{(i)}_{N_0}, u^{(i)}_{N_0})\}_{i=1}^{N}$ that follow the distribution of $(x_{N_0}, u_{N_0})$. For each pair $(x^{(i)}_{N_0}, u^{(i)}_{N_0})$, we generate $N_1$ unbiased sample for $\psi(x^{(i)}_{N_0}, u^{(i)}_{N_0})$, given by
\begin{equation*}
    \widehat{\psi}^{(i)}_j = \phi(x^{(i,j)}, u^{(i,j)}) - \phi(x^{(i)}_{N_0}, u^{(i)}_{N_0}) ~~~~~~ j = 1, 2, \cdots, N_1
\end{equation*}
where $x^{(i,j)}, u^{(i,j)}$ are sampled independently and follow the next step distribution conditioned on $(x^{(i)}_{N_0}, u^{(i)}_{N_0})$. Here, $N_1 = \mO(1)$ is another predefined hyperparameter. We denote the mean by $\Bar{\psi}^{(i)} = \frac{1}{N_1} \sum_{j=1}^{N_1} \widehat{\psi}^{(i)}_j$. Therefore, we can obtain an unbiased sample for $f(x^{(i)}_{N_0}, u^{(i)}_{N_0})$ by
\begin{equation}\label{eq:f_hat}
\begin{aligned}
     \widehat{f}(x^{(i)}_{N_0}, u^{(i)}_{N_0}) =&  \frac{1}{N_1} \sum_{j=1}^{N_1} c(x^{(i)}_{N_0}, u^{(i)}_{N_0}) \widehat{\psi}^{(i)}_j\\
     & + \biggl[ \frac{1}{N_1} \sum_{j=1}^{N_1} \widehat{\psi}^{(i)}_j \otimes \widehat{\psi}^{(i)}_j - \frac{1}{N_1-1} \sum_{j=1}^{N_1} (\widehat{\psi}^{(i)}_j - \Bar{\psi}^{(i)}) \otimes (\widehat{\psi}^{(i)}_j - \Bar{\psi}^{(i)}) \biggr] \cdot \theta.
\end{aligned}
\end{equation}
Note that the first and second terms in the square bracket are unbiased samples for $\EE[\widehat{\psi}^{(i)}_j \otimes \widehat{\psi}^{(i)}_j]$ and $\text{Cov}(\widehat{\psi}^{(i)}_j)$ respectively, which implies that the square bracket is an unbiased sample for $\psi(x^{(i)}_{N_0}, u^{(i)}_{N_0}) \otimes \psi(x^{(i)}_{N_0}, u^{(i)}_{N_0})$. Finally, the sample of gradient $\nabla L_K(\theta)$ is given by
\begin{equation}\label{eq:gradient_sample}
    \widehat{\nabla L}_{K}(\theta) = \frac1N \sum_{i=1}^N \widehat{f}(x^{(i)}_{N_0}, u^{(i)}_{N_0}).
\end{equation}
The one-step sample complexity is $\mO(N_0 N_1 N)$.
We remark that our LSTD is similar to a $\mathrm{TD}(0)$ algorithm, except that we have $N$ trajectories and we omit $J(K)$ in \eqref{eq:critic_grad}.
Denote $L_{K_t}(\theta)$ by $L_t(\theta)$ for simplicity. We also denote $\theta_t$ the critic parameter at step $t$.
The gradient sample at step $t$ (in matrix form) is denoted by $\widehat{\nabla L_t}(\theta_t)$ and the critic update is given by
\begin{equation*}
    \theta_{t+1} = \theta_t - \alpha_t \widehat{\nabla L_t}(\theta_t),
\end{equation*}
where $\alpha_t$ is the step size for the critic.

\subsection{Policy improvement for the actor}

For the actor algorithm, we borrow the idea from \cite{fazel2018global} which considered a policy gradient algorithm for the LQR problem. A similar approach is also studied by \cite{yang2019global, zeng2021two}.

Motivated by the form of the gradient \eqref{eq:grad_JK}, we define
\begin{equation}\label{eq:GK}
    G_K := (R + B\tp P_K B)K - B\tp P_K A, 
\end{equation}
so that $\nabla_K J(K) = 2 G_K D_K$.
Therefore, a vanilla policy gradient algorithm looks like
\begin{equation*}
    K_{t+1} = K_t - \beta_t G_{K_t} D_{K_t},
\end{equation*}
where $G_{K_t}$ and $D_{K_t}$ may be replaced by some estimates and $\beta_t$ is the step size for the actor. 

Instead of the vanilla policy gradient, we would consider the commonly used variant known as the natural policy gradient method \citep{kakade2001natural}. The natural policy gradient uses the inverse Fisher information matrix to precondition the gradient so that the gradient is taken w.r.t. the metric induced by the Hessian of the loss function \citep{peters2008natural}. This method has been studied in e.g., \citep{kakade2001natural, peters2008natural, bhatnagar2009natural, liu2020improved}. 
The Fisher information matrix at each state $x$ is given by 
\begin{equation}\label{eq:Fisher_state}
    F_x(K) = \EE_{u\sim \pi_K} \left[ \nabla_K \log(\pi_{K}(u | x)) \otimes  \nabla_K \log(\pi_{K}(u | x)) \right],
\end{equation}
which is a tensor in $\RR^{k \times d} \otimes \RR^{k \times d}$ as $K \in \RR^{k \times d}$ is a matrix. Then, the (average) Fisher information matrix is defined as
\begin{equation*}
    F(K) = \EE_{x \sim N(0, D_K)} \left[ F_x(K) \right] = \EE_{K} \left[ \nabla_K \log(\pi_{K}(u | x)) \otimes  \nabla_K \log(\pi_{K}(u | x)) \right].
\end{equation*}
Under the metric induced by the Hessian, the steepest descent direction of $J(K)$ is given by 
\begin{equation*}
    - \widetilde{\nabla} J(K) = - F(K)^{-1} \cdot \nabla_K J(K) = - 2 F(K)^{-1} \cdot G_K D_K,
\end{equation*}
where for $F(K)^{-1}$, we view the tensor $F(K)$ as a linear operator $\RR^{k \times d} \to \RR^{k \times d}$, so $F(K)^{-1}$ is the inverse operator. 
The following property gives a simple expression of $\widetilde{\nabla} J(K)$. The proof is in the appendix.
\begin{prop}\label{prop:rescaled_gradient} We have 
\begin{equation}\label{eq:rescaled_gradient}
    \widetilde{\nabla} J(K) = 2 \sigma^2 G_K.
\end{equation}
\end{prop}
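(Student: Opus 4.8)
The plan is to compute everything explicitly from the Gaussian form of the policy, since both the score function and the Fisher matrix admit closed forms for $\pi_K = N(-Kx, \sigma^2 I_k)$. First I would compute the score. Its log-density is $\log \pi_K(u|x) = \mathrm{const} - \frac{1}{2\sigma^2}\|u + Kx\|^2$, and differentiating the quadratic with respect to the matrix $K \in \RR^{k\times d}$ gives
$$\nabla_K \log \pi_K(u|x) = -\frac{1}{\sigma^2}(u+Kx)\,x\tp.$$
Using the reparametrization $u = -Kx + \sigma\omega$ with $\omega \sim N(0, I_k)$, the residual $u + Kx$ equals $\sigma\omega$, so the score collapses to the rank-one matrix $-\frac{1}{\sigma}\,\omega\, x\tp$. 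This rank-one structure is what ultimately makes the result clean.

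Second, I would assemble the Fisher information tensor in \eqref{eq:Fisher_state} and, crucially, read it as a linear operator on $\RR^{k\times d}$, acting by $(a\otimes b)\cdot M = \langle b, M\rangle\, a$. Feeding in the rank-one score and using the identity $\langle \omega x\tp, M\rangle = \omega\tp M x$ together with $\EE_\omega[\omega\omega\tp] = I_k$, the state-wise Fisher operator becomes $F_x(K)\cdot M = \frac{1}{\sigma^2} M\, x x\tp$. Averaging over $x \sim N(0, D_K)$ and using $\EE_K[x x\tp] = D_K$ then yields the simple form $F(K)\cdot M = \frac{1}{\sigma^2} M D_K$.

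Third, since $D_K$ is assumed invertible, the operator $M \mapsto \frac{1}{\sigma^2} M D_K$ has inverse $M \mapsto \sigma^2 M D_K^{-1}$. Applying this to $\nabla_K J(K) = 2 G_K D_K$ from Proposition \ref{prop:cost_expression} gives
$$\widetilde{\nabla} J(K) = F(K)^{-1}\cdot (2 G_K D_K) = \sigma^2 (2 G_K D_K) D_K^{-1} = 2\sigma^2 G_K,$$
where the factor $D_K$ cancels exactly; this cancellation is precisely the benefit of preconditioning by the Fisher matrix, and it also explains why the natural gradient avoids the troublesome $D_K$ that appears in the vanilla gradient.

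The only delicate point, and the main place an error could creep in, is the tensor/operator bookkeeping: staying consistent about how $a\otimes b$ acts on a matrix and correctly transcribing $\langle \omega x\tp, M\rangle$ as $\omega\tp M x$. A useful sanity check is that $\langle M, F_x(K)\cdot M\rangle = \frac{1}{\sigma^2}\|Mx\|^2 \ge 0$, confirming the operator is positive semidefinite and that the convention matches $\EE[\langle \nabla_K\log\pi_K, M\rangle^2]$. Once the convention is fixed, every remaining step is an elementary Gaussian moment computation, so no hard analysis is involved; the entire result rests on the rank-one Gaussian score and the invertibility of $D_K$.
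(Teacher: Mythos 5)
Your proof is correct and follows essentially the same route as the paper: compute the Gaussian score $-\frac{1}{\sigma^2}(u+Kx)x\tp$, evaluate the Fisher tensor using the separated Gaussian moments $\EE[\omega\omega\tp]=I_k$ and $\EE_K[xx\tp]=D_K$ to get $F(K)\cdot M = \frac{1}{\sigma^2} M D_K$, and cancel $D_K$ against the vanilla gradient $2G_K D_K$. Your explicit operator bookkeeping $(a\otimes b)\cdot M = \langle b, M\rangle a$ merely spells out what the paper summarizes as "an elementwise analysis," and the identity you derive is exactly the paper's $\sigma^2 F(K)\cdot G_K = G_K D_K$.
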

Recall that $G_K = (R + B\tp P_K B)K - B\tp P_K A$. Hence, $G_K = \theta^{22}_K K - \theta^{21}_K$ where $\theta_K$ is the true parameter w.r.t. policy $\pi_K$, given by \eqref{eq:theta_K}.
Therefore, the actor update is given by 
\begin{equation}\label{eq:actor_update}
    K_{t+1} = K_t - \beta_t (\theta^{22}_{t} K_t - \theta^{21}_{t}) =: K_t - \beta_t \widehat{G}_{K_t},
\end{equation}
where the constant $2\sigma^2$ is absorbed in the step size $\beta_t$ and we have defined $\widehat{G}_{K_t} := \theta^{22}_{t} K_t - \theta^{21}_{t}$. Recall that we use $\mG_t$ to denote the filtration generated by the training process. Since $K_{t+1}$ is deterministic in $\theta_t$ and $K_t$, $K_{t+1}$ is $\mG_t$-measurable. 

\subsection{Assumptions and main result}
Here we state some technical assumptions for our result.
\begin{assumption}\label{assump}
We assume that
\begin{enumerate}
    \item There exists a constant $\rho \in (0,1)$ such that $\rho(A-B K_t) = \rho(E_t) \le \rho$, for all $t$.
    \item There exist constants $c_A, c_E, c_{\theta}, c_K > 0$ such that $\|A-B K_t\| \le c_A$, $\|E_t\| \le c_E$, $\|\theta_t\|_F \le c_{\theta}$, and $\|K^*\|, \|K_t\| \le c_K$ for all $t$.
    \item $D_{\ep}$ is positive definite with minimum eigenvalue $\sigma_{min}(D_{\ep}) > 0$.
\end{enumerate}
\end{assumption}

\begin{rmk}
In the assumption, $E_t$ is defined by \eqref{eq:def_concat} with $K$ replaced by $K_t$. The first assumption is common in the analysis of the LQR problem \citep{fazel2018global, yang2019global}. A theoretical guarantee for this condition is hard to obtain, while we will present some numerical examples to support this assumption.
The second assumption gives upper bounds for several matrices, which is made to avoid technical tedious works to control the probability of the random trajectory hitting unfavorable regions. One potential way to alleviate this assumption is to define a projection map that reduces the size of $\theta_t$ or $K_t$ whenever it is out of range \citep{konda2000actor, bhatnagar2009natural}, which is left for future work.
The third assumption is necessary to make the problem non-degenerate (cf. Lemma \ref{lem:actor_improvement} below). 
\end{rmk}

Next, we specify the choice of parameters in the algorithm. We  initialize $\theta_0=0$, $K_0 = 0$ for simplicity. Fixing the error tolerance $\ve>0$, we set the step sizes $\alpha_t$ and $\beta_t$ to be constant in $t$:
\begin{equation}\label{eq:stepsize}
    \alpha_t = \dfrac{\sigma_{min}(D_{\ep})}{16 c_L^2 c_3 \kappa}\ve ~~~~~~ \beta_t = \dfrac{\sigma_{min}(D_{\ep})}{16 c_L^2 c_3 \kappa^2}\ve
\end{equation}
where
\begin{equation}\label{eq:kappa}
    \kappa = \max\left(\dfrac{3 \sigma_{min}(D_{\ep})}{2 c_3 \mu_{\sigma}}, \dfrac{4c_1^2}{\mu_{\sigma} \sigma_{min}(D_{\ep})}, \dfrac{3 c_D c_K^2}{\mu_{\sigma}}\right).
\end{equation}
Here, every parameter appearing in \eqref{eq:stepsize} and \eqref{eq:kappa}, except $\alpha_t$, $\beta_t$, or $\ve$, are constants of order $\mO(1)$:
\begin{enumerate}[noitemsep,topsep=0pt,parsep=0pt,partopsep=0pt]
    \item $c_L^2$ is the upper bound for $\EE [ \| \widehat{\nabla L_t}(\theta_t) \|^2_F ~|~ \mG_t ]$ that is in Lemma \ref{lem:critic_gradient_accuracy}; 
    \item $c_3$ illustrates the geometry of $J(K)$, with details in Lemma \ref{lem:gradient_dominant}; 
    \item In Lemma \ref{lem:critic_convex}, we will show that the critic loss is $\mu_{\sigma}$-strongly convex;
    \item $c_1$ is a Lipschitz constant for $\theta_K$ w.r.t. $K$ that is specified in Lemma \ref{lem:thetaK};
    \item $c_D$ is an upper bound for $\|D_{K_t}\|$ and $\|D_{K^*}\|$ that is specified in Lemma \ref{lem:bound}.
\end{enumerate}
\smallskip 
It is easy to verify that the step sizes satisfies the following inequalities:
\begin{equation}\label{eq:stepsizes}
    \dfrac{\sigma_{min}(D_{\ep})}{c_3} \beta_t \le \frac23 \mu_{\sigma} \alpha_t, ~ 
    \dfrac{\sigma_{min}(D_{\ep})}{\beta_t} \ge (\dfrac{3}{\alpha_t \mu_{\sigma}} + 2) c_1^2 + (\|R\| + c_P \|B\|^2), ~
    \text{and} ~
    \frac13 \alpha_t \mu_{\sigma} \ge \beta_t c_D c_K^2,
\end{equation}
where we need to assume that $\ve$ is small enough such that $1/ (\mu_{\sigma} \alpha_t) \ge 2 + (\|R\| + c_P \|B\|^2)/c_1^2$ for the second inequality. The total number of iterations is $T = \mO(\frac{1}{\ve} \log(\frac{1}{\ve}))$ such that
$$(1 - \beta_t c_4)^T L_0 < \ve,$$
where $L_0 = \mO(1)$ is the initial Lyapunov function that is specified at the beginning of the proof for Theorem \ref{thm:main1} and $c_4=\mO(1)$ is a positive constant that is also specified in the proof for Theorem \ref{thm:main1}. The number of samples $N$, the length of trajectory $N_0$ each step, and the sub-sample size $N_1$, are set to be $N = \mO(1)$, $N_0 = \mO(\log(\frac{1}{\ve}))$, and $N_1=\mO(1)$, in order to achieve desired accuracy for the sample of critic gradient, with details in Lemma \ref{lem:critic_gradient_accuracy}. 
Here, $\frac{\alpha_t}{\beta_t} = \kappa = \mO(1)$ implies that our algorithm has single timescale. In such algorithm, the actor and the critic are interdependent, which makes the analysis challenging.
We summarize the actor-critic algorithm in Algorithm \ref{alg:lqr}.

\begin{algorithm}
\caption{Single timescale actor-critic algorithm for LQR}\label{alg:lqr}
\begin{algorithmic}
\Require Training steps T, step sizes $\alpha_t$, $\beta_t$, sample size $N$, $N_0$, and $N_1$
\Ensure critic parameter $\theta_T$, actor parameter $K_T$
\State initialization: critic parameter $\theta_{0} = 0$ and actor parameter $K_0 = 0$
\For{$t=0$ \textbf{to} $T-1$}
\State Sample $\widehat{\nabla L_t}(\theta_t)$ according to \eqref{eq:gradient_sample} \Comment{critic steps}
\State $\theta_{t+1} = \theta_{t} - \alpha_t \widehat{\nabla L_t}(\theta_t)$
\medskip
\State $K_{t+1} = K_t - \beta_t (\theta^{22}_{t} K_t - \theta^{21}_{t}) $ \Comment{actor steps}
\EndFor
\end{algorithmic}
\end{algorithm}

The main result of our work is the following convergence theorem. 
\begin{thm}[Main theorem]\label{thm:main1}
Under Assumption \ref{assump}, for any $\ve > 0$ that is sufficiently small, Algorithm~\ref{alg:lqr}, with the choice of parameters discussed above, has sample complexity $\mO(\frac{1}{\ve} \log(\frac{1}{\ve})^2 )$. Moreover, the terminal error satisfies
\begin{equation*}
    \EE [\|\theta_T - \theta_{K_T}\|_F^2] \le \ve ~~~ \text{and} ~~~ \EE [J(K_T) - J(K^*)] \le \ve.
\end{equation*}
\end{thm}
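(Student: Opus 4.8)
The plan is to track a single Lyapunov function that couples the two error sources,
\begin{equation*}
    \mathcal{L}_t = \EE\bigl[\|\theta_t - \theta_{K_t}\|_F^2\bigr] + \kappa\, \EE\bigl[J(K_t) - J(K^*)\bigr],
\end{equation*}
where $\theta_{K_t}$ is the exact critic parameter for the current policy (given by \eqref{eq:theta_K}) and the weight $\kappa = \alpha_t/\beta_t$ is the timescale ratio from \eqref{eq:kappa}. The goal is a one-step contraction $\mathcal{L}_{t+1} \le (1 - \beta_t c_4)\mathcal{L}_t + (\text{bias})$ with $c_4 = \mO(1) > 0$; iterating this over $T = \mO(\frac{1}{\ve}\log(\frac{1}{\ve}))$ steps while controlling the bias through the sampling parameters then yields $\mathcal{L}_T \le \ve$, from which both terminal bounds follow since each summand of $\mathcal{L}_T$ is nonnegative (note $J(K_t) \ge J(K^*)$) and bounded by $\mathcal{L}_T$ up to the factor $\kappa$.

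First I would analyze the critic half of $\mathcal{L}_t$. Expanding $\|\theta_{t+1} - \theta_{K_{t+1}}\|_F^2$ and splitting the moving target as $\theta_{K_{t+1}} = \theta_{K_t} + (\theta_{K_{t+1}} - \theta_{K_t})$, I would take the conditional expectation given $\mG_t$. Because $\theta_{K_t}$ is the unique minimizer of the $\mu_{\sigma}$-strongly convex loss $L_t$ (Lemma \ref{lem:critic_convex}), so that $\nabla L_t(\theta_{K_t}) = 0$, the strong-convexity inequality $\langle \nabla L_t(\theta_t), \theta_t - \theta_{K_t}\rangle \ge \mu_{\sigma}\|\theta_t - \theta_{K_t}\|_F^2$ produces a contraction of order $(1 - 2\alpha_t\mu_{\sigma})$. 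Against this sit three unfavorable terms: the second-moment term $\alpha_t^2 c_L^2$ from the gradient sample (Lemma \ref{lem:critic_gradient_accuracy}), the target drift $\|\theta_{K_{t+1}} - \theta_{K_t}\| \le c_1\|K_{t+1} - K_t\| = c_1\beta_t\|\widehat{G}_{K_t}\|_F$ coming from the Lipschitz continuity of $\theta_K$ in $K$ (Lemma \ref{lem:thetaK}), and a cross term in which $\|\widehat{G}_{K_t}\|_F$ couples back both to the actor suboptimality and to the critic error itself.

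Next I would analyze the actor half. The update \eqref{eq:actor_update} follows the natural-gradient direction $\widehat{G}_{K_t} = \theta_t^{22}K_t - \theta_t^{21}$, which differs from the exact direction $G_{K_t} = \theta_{K_t}^{22}K_t - \theta_{K_t}^{21}$ by at most $(c_K + 1)\|\theta_t - \theta_{K_t}\|_F$. Combining a smoothness/descent estimate for $J$ along this direction with the gradient-dominance (Polyak--\L{}ojasiewicz type) property from Lemma \ref{lem:gradient_dominant}, which bounds $J(K) - J(K^*)$ by a multiple of $\|G_K\|_F^2$, gives a decrease of the form $J(K_{t+1}) - J(K^*) \le (1 - c\,\beta_t)(J(K_t) - J(K^*)) + \mO(\beta_t)\|\theta_t - \theta_{K_t}\|_F^2$, where the last term is the price paid for using the inexact critic. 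Here the nondegeneracy $\sigma_{min}(D_{\ep}) > 0$ (Assumption \ref{assump}, part 3, cf. Lemma \ref{lem:actor_improvement}) is what guarantees a strictly positive contraction rate.

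The core of the argument is to add the two recursions with the weight $\kappa$ and show the cross terms cancel rather than accumulate. The critic error injected into the actor estimate, of size $\mO(\beta_t)\|\theta_t - \theta_{K_t}\|_F^2$, must be absorbed by the spare critic contraction $\alpha_t\mu_{\sigma}\|\theta_t - \theta_{K_t}\|_F^2$, while the actor drift entering the critic, of size $\mO(\beta_t^2)\|\widehat{G}_{K_t}\|_F^2 \le \mO(\beta_t^2)(J(K_t) - J(K^*) + \|\theta_t - \theta_{K_t}\|_F^2)$, must be absorbed by the actor contraction. The three step-size inequalities in \eqref{eq:stepsizes} are engineered to make exactly these two absorptions hold, leaving a net contraction $\mathcal{L}_{t+1} \le (1 - \beta_t c_4)\mathcal{L}_t + (\text{bias})$. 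For the bias, the only non-vanishing part of $\EE[\widehat{\nabla L_t}(\theta_t)\mid\mG_t] - \nabla L_t(\theta_t)$ comes from the MCMC chain being sampled at step $N_0$ rather than at stationarity; by ergodicity this gap decays like $\rho^{N_0}$, so $N_0 = \mO(\log(\frac{1}{\ve}))$ makes the bias $\mO(\ve\beta_t)$, which is the fixed point of the recursion. Iterating with $\beta_t = \mO(\ve)$ gives $\mathcal{L}_T \le \ve$, and the sample complexity is $T \cdot \mO(N_0 N_1 N) = \mO(\frac{1}{\ve}\log(\frac{1}{\ve})) \cdot \mO(\log(\frac{1}{\ve})) = \mO(\frac{1}{\ve}\log(\frac{1}{\ve})^2)$. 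I expect the main obstacle to be precisely this coupling: neither the critic contraction nor the actor descent is self-contained, since the critic chases a target the actor keeps moving while the actor steps in a direction the critic only estimates, so the simultaneous cancellation of both cross terms — and the verification that \eqref{eq:stepsizes} suffices for it — is the crux of the proof.
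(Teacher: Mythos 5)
Your proposal follows essentially the same route as the paper: a Lyapunov function coupling the critic error $\|\theta_t-\theta_{K_t}\|_F^2$ with the actor suboptimality $J(K_t)-J(K^*)$, a one-step recursion in which the two cross terms (the target drift $\|\theta_{K_{t+1}}-\theta_{K_t}\|_F^2$ entering the critic and the critic error $\|G_{K_t}-\widehat{G}_{K_t}\|_F^2$ entering the actor) are absorbed by the opposite recursion's negative terms via the three inequalities in \eqref{eq:stepsizes}, an MCMC bias of order $\rho^{N_0}$ killed by $N_0=\mO(\log\frac{1}{\ve})$, and $T=\mO(\frac{1}{\ve}\log\frac{1}{\ve})$ iterations. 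One concrete caveat: the paper uses the \emph{unweighted} sum $\mL_t=\|\theta_t-\theta_{K_t}\|_F^2 + J(K_t)-J(K^*)$, whereas you weight the actor term by $\kappa=\alpha_t/\beta_t$. With that weight, the critic-error-into-actor term becomes $\kappa\beta_t c_D c_K^2\|\theta_t-\theta_{K_t}\|_F^2=\alpha_t c_D c_K^2\|\theta_t-\theta_{K_t}\|_F^2$, and absorbing it into the critic's contraction $\alpha_t\mu_{\sigma}\|\theta_t-\theta_{K_t}\|_F^2$ would require $c_D c_K^2\lesssim\mu_{\sigma}$ — a condition on problem constants that is not guaranteed (indeed $\mu_{\sigma}=\mO(\sigma^4)$ can be small while $c_Dc_K^2=\mO(1)$). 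The timescale ratio is already built into $\alpha_t/\beta_t=\kappa$ via \eqref{eq:kappa}, so the fix is simply to drop the extra weight; with weight one, the needed condition is exactly the third inequality of \eqref{eq:stepsizes}, and the rest of your argument (including the final case analysis versus your ``geometric recursion plus bias'' bookkeeping, which differ only in how the constant in $\mL_T\le\ve$ is tracked) goes through as in the paper.
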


\begin{rmk}
The number of steps is $T = \mO(\frac{1}{\ve} \log(\frac{1}{\ve}))$ and the one-step complexity is $\mO(\log(\frac{1}{\ve}))$. Therefore, the total complexity is $\mO(\frac{1}{\ve}~ \log(\frac{1}{\ve})^2)$. This theorem tells us that we have small error for both the critic and the actor. If we want error estimate for $\|K_T - K^*\|_F$ or $\|\theta_T - \theta^*\|_F$, we will need extra assumption such as strong convexity of $J(K)$ in $K$.
\end{rmk}

We believe the complexity $\mO(\frac{1}{\ve} \log(\frac{1}{\ve})^2)$ is nearly optimal (up to a log factor). Even for a simple stochastic gradient descent (SGD) algorithm, we need $\mO(\ve^{-1})$ sample to achieve $\ve$-optimal solution \citep{bottou2012stochastic}. The LQR problem is bilevel, with the critic part similar to SGD. Thus, the problem is more complicated than SGD and expects to require higher sample complexity. The convergence rate is also confirmed by the numerical examples below. 

\section{Proof sketch of the main theorem}\label{sec:proof}
In this section, we give a sketch of the proof of Theorem \ref{thm:main1} and postpone the details to the appendix. The lemmas used in the proof are stated in the later part of this section. 
\begin{proof}[Proof Sketch of Theorem \ref{thm:main1}]
First, we show in Lemma \ref{lem:critic_convex} that the critic loss is strongly convex.
Then, we show in Lemma \ref{lem:critic_gradient_accuracy} that we can obtain the sample of gradient with small bias: 
\begin{equation*}
    \left\| \EE \left[ \widehat{\nabla L_t}(\theta_t) - \nabla L_t(\theta_t) | \mG_t \right] \right\|_F \le \delta 
\end{equation*}
With these two lemmas, we show in Lemma \ref{lem:critic_improvement} that there is an improvement of critic error in each step:
\begin{multline}\label{eq:critic_imporvement_sketch}
     \quad \EE \left[ \| \theta_{t+1} - \theta_{K_{t+1}} \|^2_F | \mG_t \right] - \| \theta_t - \theta_{K_t} \|_F^2 \\
     \le - \frac43 \alpha_t \mu_{\sigma} \| \theta_t - \theta_{K_t} \|_F^2 + \frac14 \dfrac{\sigma_{min}(D_{\ep})}{c_3} \beta_t \ve + \bigl(\dfrac{3}{\alpha_t \mu_{\sigma}} + 2\bigr) \| \theta_{K_t} - \theta_{K_{t+1}} \|_F^2.
\end{multline}
Here, the term $\frac14 \frac{\sigma_{min}(D_{\ep})}{c_3} \beta_t \ve$ comes from the sample error in Lemma \ref{lem:critic_gradient_accuracy} and $(\frac{3}{\alpha_t \mu_{\sigma}} + 2) \| \theta_{K_t} - \theta_{K_{t+1}} \|_F^2$ is due to the actor update. Intuitively, we expect $\| \theta_{t+1} - \theta_{K_t} \|_F$ to be smaller than $\| \theta_t - \theta_{K_t} \|_F$, recall that $\| \theta_t - \theta_{K_t} \|_F$ measures the error of $\theta_t$ w.r.t. the current policy parameter $K_t$, while the last term in \eqref{eq:critic_imporvement_sketch} takes into account the update of $K_t$ to $K_{t+1}$ in the actor step. 
 
Furthermore, we establish the improvement of the actor in Lemma \ref{lem:actor_improvement}:
\begin{equation}\label{eq:actor_improvement_sktech}
\begin{aligned}
    & J(K_{t+1}) - J(K_t) \le -\beta_t \dfrac{\sigma_{min}(D_{\ep})}{c_3} (J(K_t) - J(K^*))\\
    & \quad - \beta_t  \left[\sigma_{min}(D_{\ep}) - \beta_t c_D (\|R\| + c_P \|B\|^2) \right] \|\widehat{G}_{K_t}\|_F^2 + \beta_t c_D \|G_{K_t} - \widehat{G}_{K_t}\|_F^2
\end{aligned}
\end{equation}
where the last term comes from the critic error.

To establish the convergence, we define a Lyapunov function
\begin{equation*}
    \mL_t = \mL(\theta_t, K_t) := \|\theta_t - \theta_{K_t}\|_F^2 + J(K_t) - J(K^*),
\end{equation*}
which is the sum of critic and actor errors. 
Direct computation shows that the last term in \eqref{eq:critic_imporvement_sketch} can be bounded by the second term in \eqref{eq:actor_improvement_sktech} and the last term in \eqref{eq:actor_improvement_sktech} can be bounded by $\frac14$ of the first term in \eqref{eq:critic_imporvement_sketch}. Therefore, combining \eqref{eq:critic_imporvement_sketch} and \eqref{eq:actor_improvement_sktech}, we obtain the decay estimate of the Lyapunov function 
\begin{equation}\label{eq:contraction_sketch}
\EE[\mL_{t+1} - \mL_t] \le - \EE \left[ \alpha_t \mu_{\sigma} \| \theta_t - \theta_{K_t} \|_F^2 + \beta_t \dfrac{\sigma_{min}(D_{\ep})}{c_3} (J(K_t) - J(K^*)) \right] + \frac14 \dfrac{\sigma_{min}(D_{\ep})}{c_3} \beta_t \ve.
\end{equation}
Notice that the last term (sample error) in \eqref{eq:contraction_sketch} can be bounded 
by the first term if $\EE [\| \theta_t - \theta_{K_t} \|_F^2] \ge \frac{\ve}{2}$ (according to the first inequality of \eqref{eq:stepsizes}) or by the second term if $\EE[J(K_t) - J(K^*)] \ge \frac{\ve}{2}$ and we will obtain a contraction rate for the Lyapunov function: 
$$\mL_{t+1} - \mL_t \le - \mO(\beta_t) \mL_t.$$
If both $\EE[\| \theta_t - \theta_{K_t} \|_F^2] < \frac{\ve}{2}$ and $\EE [J(K_t) - J(K^*)] < \frac{\ve}{2}$, then $\EE[\mL_t] < \ve$ and we can easily show that $\EE[\mL_{t+1}]$ is also less than $\ve$. This finishes the proof. 
\end{proof}

In summary, the key point of the proof is that we can bound the positive term in the critic improvement by the negative term in the actor improvement and vice versa. In this way, we obtain a contraction rate of the Lyapunov function.

Before we turn to the analysis of critic and actor parts, we state the following lemma which provides bounds for matrices $D_{K_t}$, $P_{K_t}$, and $\Sigma_{K_t}$. 
\begin{lemma}\label{lem:bound}
Under Assumption \ref{assump}, the matrix $D_{K_t}$, $P_{K_t}$ and $\Sigma_{K_t}$ satisfy
\begin{equation}\label{eq:bounds}
    \sigma_{min}(D_{\ep}) \le D_{K_t} \le c_D, ~~~
    P_{K_t} \le c_P, ~~ \text{and} ~~ 
    \Sigma_{K_t} \le  c_{\Sigma}
\end{equation}
where the three constants $c_D, c_P, c_{\Sigma} = \mO(1)$ only depend on $A$, $B$, $D_{\ep}$, $Q$, $R$, $\rho$, $\sigma$, and $c_A$.
Furthermore, the first inequality also holds with $D_{K_t}$ replaced by $D_{K^*}$.
\end{lemma}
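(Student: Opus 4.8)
The plan is to derive all three estimates from the fixed-point (discrete Lyapunov) equations \eqref{eq:DK}, \eqref{eq:PK} together with the block structure \eqref{eq:stationary_z}, after first establishing a single uniform decay estimate for the powers of $M_t := A - B K_t$. The lower bound is immediate: writing $D_{K_t}$ as the series \eqref{eq:DK_series}, every summand $M_t^s D_{\ep} (M_t\tp)^s$ is positive semidefinite and the $s=0$ term equals $D_{\ep}$, so $D_{K_t} \ge D_{\ep} \ge \sigma_{min}(D_{\ep}) I$. Since the optimal policy $K^*$ is stabilizing (a standard LQR fact), $\rho(A-BK^*)<1$ and the same series argument yields $D_{K^*} \ge \sigma_{min}(D_{\ep}) I$.

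The crux is a uniform geometric bound $\|M_t^s\| \le C r^s$. By Assumption \ref{assump} we have $\rho(M_t) \le \rho < 1$ and $\|M_t\| \le c_A$ for all $t$. Fix $r := (1+\rho)/2 \in (\rho,1)$ and use the Cauchy integral $M_t^s = \frac{1}{2\pi i} \oint_{|z|=r} z^s (zI-M_t)^{-1} \rd z$. On the circle $|z|=r$ each eigenvalue $\lambda_j$ of $M_t$ satisfies $|z-\lambda_j| \ge r - \rho$, so $|\det(zI-M_t)| \ge (r-\rho)^d$, while the entries of $\mathrm{adj}(zI-M_t)$ are bounded by a quantity depending only on $d$, $r$, and $c_A$; hence $\|(zI-M_t)^{-1}\|$ is bounded uniformly in $t$ and $\|M_t^s\| \le C r^s$ with $C = C(d,\rho,c_A)$.

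With this estimate the remaining bounds follow from the series representations. For $D_{K_t}$, $\|D_{K_t}\| \le \|D_{\ep}\| \sum_{s\ge 0}\|M_t^s\|^2 \le \|D_{\ep}\|\, C^2/(1-r^2) =: c_D$. Equation \eqref{eq:PK} is again a Lyapunov equation with data $Q + K_t\tp R K_t \ge 0$, giving $P_{K_t} = \sum_{s\ge 0}(M_t\tp)^s (Q + K_t\tp R K_t) M_t^s$ and $\|P_{K_t}\| \le (\|Q\| + c_K^2\|R\|)\, C^2/(1-r^2) =: c_P$, using $\|K_t\|\le c_K$. Finally \eqref{eq:stationary_z} factors as $\Sigma_{K_t} = \begin{bmatrix} I \\ -K_t\end{bmatrix} D_{K_t} \begin{bmatrix} I & -K_t\tp \end{bmatrix} + \mathrm{diag}(0,\sigma^2 I_k)$, whence $\|\Sigma_{K_t}\| \le (1+\|K_t\|^2)\|D_{K_t}\| + \sigma^2 \le (1+c_K^2)c_D + \sigma^2 =: c_{\Sigma}$. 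All constants depend only on $A$, $B$, $D_{\ep}$, $Q$, $R$, $\rho$, $\sigma$, $c_A$ (and $c_K$, itself of order $\mO(1)$).

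The main obstacle is the uniform power bound of the second paragraph: the spectral-radius condition $\rho(M_t)\le\rho$ alone does not control the transient growth $\|M_t^s\|$, since $M_t$ may be highly non-normal, and the bound must hold uniformly over every $K_t$ produced by the algorithm. The resolvent estimate circumvents this by converting the spectral gap $r-\rho$ into an explicit lower bound on $|\det(zI-M_t)|$, producing a constant independent of $t$; an alternative route is to invoke continuity of $K \mapsto \|D_K\|$ on the compact set $\{\,\|K\|\le c_K,\ \rho(A-BK)\le\rho\,\}$, which gives existence of the uniform bound but less explicit dependence on the data.
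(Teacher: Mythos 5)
Your proof is correct and follows the same overall route as the paper: the lower bound from the $s=0$ term of the series \eqref{eq:DK_series}, the upper bounds on $D_{K_t}$ and $P_{K_t}$ from their Lyapunov-equation series representations, and the bound on $\Sigma_{K_t}$ from its block structure in terms of $D_{K_t}$ and $K_t$ (your factorization $\Sigma_{K_t} = \bigl[\begin{smallmatrix} I \\ -K_t\end{smallmatrix}\bigr] D_{K_t} \bigl[\begin{smallmatrix} I & -K_t\tp\end{smallmatrix}\bigr] + \mathrm{diag}(0,\sigma^2 I_k)$ is a cleaner way to get essentially the paper's constant $c_\Sigma$). The one place where you go beyond the paper is the uniform geometric bound $\|(A-BK_t)^s\|\le C r^s$: the paper justifies this only by citing $\lim_{s\to\infty}\|(A-BK_t)^s\|^{1/s}=\rho(A-BK_t)\le\rho$, which by itself does not give a constant $C$ independent of $t$ (the transient growth of a non-normal matrix is not controlled by its spectral radius alone). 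Your Cauchy-integral/resolvent argument, using $|\det(zI-M_t)|\ge(r-\rho)^d$ on $|z|=r$ together with $\|M_t\|\le c_A$ to bound the adjugate, supplies exactly the uniformity the paper's sketch presupposes, with explicit dependence on $d$, $\rho$, $c_A$ matching the paper's stated constants. One small omission: the lemma's final claim is that the full two-sided inequality $\sigma_{min}(D_{\ep})\le D_{K^*}\le c_D$ holds for $K^*$, and you only address the lower bound there; the upper bound is immediate from the same series argument since $K^*$ is a single fixed stabilizing matrix determined by the problem data.
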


\subsection{Analysis of the critic part}
In this subsection, we analyze the critic part of the algorithm. All the proofs are deferred to the appendix. 
Let us start with the following lemma, which gives the strong convexity property of the critic loss.
\begin{lemma}[Strong convexity of critic loss]\label{lem:critic_convex}
Suppose that $\rho(E) \le \rho < 1$, 
$L_K(\theta)$ is $\mu_{\sigma}$-strongly convex in $\theta$, where $\mu_{\sigma}>0$ only depends on $A$, $B$, $D_{\ep}$, $\rho$, $\sigma$, $c_K$, and $c_{\Sigma}$. Moreover, $\mu_{\sigma} = \mO(\sigma^4)$ when $\sigma$ is small. 
\end{lemma}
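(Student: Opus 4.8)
The plan is to work directly with the Hessian $\nabla^2 L_K(\theta)=\EE_K[\psi(x,u)\otimes\psi(x,u)]$ and show that its quadratic form is bounded below, i.e. $\EE_K[\langle\psi(x,u),\theta\rangle^2]\ge\mu_\sigma\|\theta\|_F^2$ for every symmetric $\theta$. (The restriction to symmetric $\theta$ is harmless and in fact necessary: since the feature $\phi$ in \eqref{eq:feature} is symmetric, $\psi$ is symmetric and $L_K$ ignores the antisymmetric part of $\theta$; the critic iterates stay symmetric because $\theta_0$ and every sampled gradient are symmetric.) The first step is to make $\psi$ explicit. Using $z_{s+1}=Ez_s+\widetilde{\ep}_s$ with $\widetilde{\ep}_s\sim N(0,\Sigma_{\ep})$, I compute $\EE_K[\phi(x',u')\mid x,u]=E\phi(x,u)E\tp+\Sigma_{\ep}$, so that with $z=[x\tp,u\tp]\tp$,
\[
\langle\psi(x,u),\theta\rangle=\langle\phi(x,u),E\tp\theta E-\theta\rangle+\langle\Sigma_{\ep},\theta\rangle=z\tp M z+\langle\Sigma_{\ep},\theta\rangle,\qquad M:=E\tp\theta E-\theta .
\]
Since $\EE_K[\psi]=0$, the constant $\langle\Sigma_{\ep},\theta\rangle$ equals $-\EE_K[z\tp Mz]$ (this is just the Lyapunov identity $\Sigma_K=\Sigma_{\ep}+E\Sigma_K E\tp$), so $\langle\psi,\theta\rangle$ is a centered quadratic form in the Gaussian $z\sim N(0,\Sigma_K)$.

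Second, I apply the Gaussian variance identity: for $z\sim N(0,\Sigma_K)$ and symmetric $M$,
\[
\EE_K[\langle\psi,\theta\rangle^2]=\mathrm{Var}(z\tp Mz)=2\,\Tr(M\Sigma_K M\Sigma_K)=2\,\|\Sigma_K^{1/2}M\Sigma_K^{1/2}\|_F^2\ge 2\,\sigma_{min}(\Sigma_K)^2\,\|M\|_F^2 .
\]
This reduces the lemma to two quantitative estimates: a lower bound on $\sigma_{min}(\Sigma_K)$ and a lower bound of $\|M\|_F=\|E\tp\theta E-\theta\|_F$ in terms of $\|\theta\|_F$.

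Third, for $\sigma_{min}(\Sigma_K)$ I use the factorization $\Sigma_K=\begin{bmatrix}I_d\\-K\end{bmatrix}D_K\begin{bmatrix}I_d&-K\tp\end{bmatrix}+\mathrm{diag}(0,\sigma^2 I_k)$, whence $[a\tp,b\tp]\Sigma_K[a\tp,b\tp]\tp\ge\sigma_{min}(D_{\ep})\,|a-K\tp b|^2+\sigma^2|b|^2$ using $D_K\ge\sigma_{min}(D_{\ep})$ from Lemma~\ref{lem:bound}; a Young-inequality split of $|a-K\tp b|^2$ with a split parameter tuned to $\sigma$ then gives $\sigma_{min}(\Sigma_K)\ge c\,\sigma^2$ for small $\sigma$, with $c$ depending on $c_K$ and $\sigma_{min}(D_{\ep})$. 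For the bound on $\|M\|_F$, I view $M=-(I-\mathcal E)\theta$ with the linear operator $\mathcal E(\theta):=E\tp\theta E$ acting on symmetric matrices. Because $\rho(\mathcal E)=\rho(E)^2\le\rho^2<1$, the Neumann series $(I-\mathcal E)^{-1}=\sum_{n\ge0}\mathcal E^n$ converges and $\|M\|_F\ge\|\theta\|_F/\|(I-\mathcal E)^{-1}\|$; to bound $\|(I-\mathcal E)^{-1}\|\le\sum_{n\ge0}\|E^n\|^2$ by the allowed constants I reduce the decay of $E^n$ to that of $(A-BK)^n$ through the identity $E^n=\begin{bmatrix}I_d\\-K\end{bmatrix}(A-BK)^{n-1}\begin{bmatrix}A&B\end{bmatrix}$ and extract geometric decay $\|(A-BK)^n\|\le C r^{n}$ (with $r<1$) from the Lyapunov equation \eqref{eq:DK} for $D_K$ together with $D_{\ep}>0$. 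Combining the three steps yields $\mu_\sigma=2\,\sigma_{min}(\Sigma_K)^2/\|(I-\mathcal E)^{-1}\|^2$, which scales like $\sigma^4$ as $\sigma\to0$.

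The main obstacle I anticipate is the quantitative control of $(I-\mathcal E)^{-1}$: because $E$ is generally non-normal, its spectral radius being below $1$ does not by itself bound $\sum_n\|E^n\|^2$ by the listed constants, so the reduction to $(A-BK)$ and a Lyapunov certificate for its geometric decay (rather than a bare Gelfand argument) is the delicate part, and it is also where $c_D$ and $c_\Sigma$ enter. A secondary point requiring care is the $\sigma$-dependence: each factor $\sigma_{min}(\Sigma_K)$ contributes one power of $\sigma^2$, and I must verify that the split parameter in the Young inequality does not introduce a worse power, so that the sharp rate $\mu_\sigma=\mO(\sigma^4)$ is recovered.
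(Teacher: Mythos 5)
Your proposal is correct and follows essentially the same route as the paper's proof: both reduce the Hessian lower bound to the variance of the centered Gaussian quadratic form $z\tp(E\tp\theta E-\theta)z$ under $z\sim N(0,\Sigma_K)$, lower-bound $\sigma_{min}(\Sigma_K)$ by $\mO(\sigma^2)$ via a Young-type split of the factorized covariance, and invert the map $\theta\mapsto\theta-E\tp\theta E$ through its Neumann series to recover $\|\theta\|_F$, giving $\mu_{\sigma}=\mO(\sigma^4)$. Your explicit restriction to symmetric $\theta$ (justified by the symmetry of the iterates) and your care about the non-normality of $E$ when bounding $\sum_n\|E^n\|^2$ are, if anything, slightly more rigorous than the corresponding steps in the paper, which reduces to the symmetric part via $\Tr[M\psi]=\Tr[\tfrac12(M+M\tp)\psi]$ and invokes Gelfand's formula directly.
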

Actually, one technical reason of using a stochastic policy for exploration is to guarantee the strong convexity. 
The next lemma gives a quantitative description of the accuracy of critic gradient sampling proposed in \S\ref{sec:critic_alg}. 
\begin{lemma}[Gradient sample accuracy]\label{lem:critic_gradient_accuracy}
Under Assumption \ref{assump}, for any $\delta >0$ that is sufficiently small, let $\widehat{\nabla L_t}(\theta_t)$ be the sample of $\nabla L_t(\theta_t)$ with complexity $N, N_1 = \mO(1)$ and $N_0 = \mO(\log \frac{1}{\delta})$. Then, we have
\begin{equation}\label{eq:error_grad}
    \left\| \EE \left[ \widehat{\nabla L_t}(\theta_t) - \nabla L_t(\theta_t) ~\Big|~ \mG_t \right] \right\|_F \le \delta 
\end{equation}
and 
\begin{equation}\label{eq:critic_grad_bound}
    \EE \left[ \| \widehat{\nabla L_t}(\theta_t) \|^2_F ~\Big|~ \mG_t \right] \le c_L^2,
\end{equation}
where $c_L = \mO(1)$ is a positive constant that only depends on $A$, $B$, $D_{\ep}$, $Q$, $R$, $\sigma$, $c_K$, and $c_{\theta}$. 
\end{lemma}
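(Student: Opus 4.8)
The plan is to treat the two estimates separately and, for both, to reduce the $N$-trajectory average in \eqref{eq:gradient_sample} to a single sample, since the $\widehat{f}(x^{(i)}_{N_0}, u^{(i)}_{N_0})$ are i.i.d.\ conditioned on $\mG_t$ (which fixes $\theta_t$ and $K_t$). The heart of the bias bound \eqref{eq:error_grad} is the observation that the \emph{only} source of error is the mismatch between the law of $(x_{N_0}, u_{N_0})$ and the stationary law $N(0,\Sigma_K)$. First I would verify that $\widehat{f}$ in \eqref{eq:f_hat} is conditionally unbiased given the pair $z = (x^{(i)}_{N_0}, u^{(i)}_{N_0})$: the $\widehat{\psi}^{(i)}_j$ are i.i.d.\ unbiased for $\psi(z)$, so the first term of \eqref{eq:f_hat} has conditional mean $c(z)\psi(z)$; for the second term, $\EE[\widehat{\psi}_j \otimes \widehat{\psi}_j \mid z] = \mathrm{Cov}(\widehat{\psi}_j \mid z) + \psi(z)\otimes\psi(z)$, and the subtracted unbiased sample covariance $\frac{1}{N_1-1}\sum_j (\widehat{\psi}_j - \bar\psi)\otimes(\widehat{\psi}_j - \bar\psi)$ cancels $\mathrm{Cov}(\widehat{\psi}_j\mid z)$ exactly, leaving $\psi(z)\otimes\psi(z)$. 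Hence $\EE[\widehat{f}(z)\mid z,\mG_t] = f(z)$ (this is why $N_1 \ge 2$ is needed), so $\EE[\widehat{\nabla L_t}(\theta_t)\mid\mG_t] = \EE_{z\sim N(0,\Sigma^{(N_0)}_K)}[f(z)]$, where $\Sigma^{(N_0)}_K$ is the law of $(x_{N_0},u_{N_0})$ started from $x_0=0$, whereas $\nabla L_t(\theta_t) = \EE_{z\sim N(0,\Sigma_K)}[f(z)]$.

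For the bias I would use that $x_{N_0}\sim N(0, D^{(N_0)}_K)$ with $D^{(N_0)}_K = \sum_{s=0}^{N_0-1}(A-BK_t)^s D_\ep ((A-BK_t)\tp)^s$, the truncation of \eqref{eq:DK_series}, and that $\Sigma^{(N_0)}_K$ has the same block form as \eqref{eq:stationary_z} with $D_K$ replaced by $D^{(N_0)}_K$, giving $\|\Sigma^{(N_0)}_K - \Sigma_K\|_F \le (1+\|K_t\|)^2 \|D^{(N_0)}_K - D_K\|_F$. Since $D_K - D^{(N_0)}_K = \sum_{s\ge N_0}(A-BK_t)^s D_\ep ((A-BK_t)\tp)^s$ and $\rho(A-BK_t)\le\rho<1$ yields $\|(A-BK_t)^s\|\le C\bar\rho^{\,s}$ for some fixed $\bar\rho\in(\rho,1)$, this tail is $\mO(\bar\rho^{\,2N_0})$. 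To transfer this to the expectations, note $\psi(z) = Ezz\tp E\tp + \Sigma_\ep - zz\tp$ is quadratic in $z$ and $c(z)$ is quadratic, so $f(z)$ is a degree-four polynomial in $z$ with $\mO(1)$ coefficients (controlled by $Q,R,c_K,c_\theta$); consequently $\Sigma\mapsto\EE_{z\sim N(0,\Sigma)}[f(z)]$ is a degree-two polynomial in the entries of $\Sigma$, hence Lipschitz with an $\mO(1)$ constant on $\{0\preceq\Sigma\preceq c_\Sigma I\}$ (Lemma~\ref{lem:bound}). The bias is therefore $\mO(\bar\rho^{\,2N_0})\le\delta$ once $N_0 = \mO(\log\frac1\delta)$, while $N,N_1=\mO(1)$ play no role in it.

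For the second moment \eqref{eq:critic_grad_bound}, convexity of $\|\cdot\|_F^2$ and i.i.d.\ sampling give $\EE[\|\widehat{\nabla L_t}(\theta_t)\|_F^2\mid\mG_t]\le\EE[\|\widehat{f}(x_{N_0},u_{N_0})\|_F^2\mid\mG_t]$, so $N=\mO(1)$ suffices. The estimator $\widehat{f}$ is a fixed degree-four polynomial in the jointly Gaussian collection $(x_{N_0},u_{N_0})$ and the one-step continuations $(x^{(i,j)},u^{(i,j)})$, with coefficients bounded through $c_K,c_\theta,Q,R$, so $\|\widehat{f}\|_F^2$ is a degree-eight Gaussian polynomial. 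All covariances are bounded uniformly in $N_0$ — indeed $\Sigma^{(N_0)}_K\preceq\Sigma_K\preceq c_\Sigma I$ by PSD monotonicity of the truncated series, and the conditional continuation law is likewise bounded — so standard Gaussian moment bounds (Wick/Isserlis) yield $\EE[\|\widehat{f}\|_F^2\mid\mG_t]\le c_L^2=\mO(1)$ with $c_L$ depending only on the stated constants, and any $N_1\ge 2$ works.

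I expect the bias estimate to be the main obstacle. Its conceptual content is the exact conditional unbiasedness via the sample-covariance correction, which isolates the remaining error as a pure geometric mixing error and forces $N_0\sim\log\frac1\delta$ while keeping $N,N_1=\mO(1)$; the Lipschitz-in-$\Sigma$ step is elementary (Gaussian moments are polynomial in $\Sigma$) but is where the $\mO(1)$ constants must be tracked against $c_\Sigma,c_K,c_\theta$. The second-moment bound is comparatively routine once the uniform bound $\Sigma^{(N_0)}_K\preceq c_\Sigma I$ is established.
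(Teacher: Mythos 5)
Your proposal is correct, and it reaches the bias bound by a genuinely different route than the paper. Both proofs share the same skeleton: reduce the bias to the discrepancy between the law $N(0,\Sigma^{(N_0)}_{K_t})$ of $(x_{N_0},u_{N_0})$ and the stationary law $N(0,\Sigma_{K_t})$, and show $\|\Sigma^{(N_0)}_{K_t}-\Sigma_{K_t}\|_F$ decays geometrically in $N_0$ via the truncated series for $D_K$. Where you diverge is the transfer step: the paper compares the two Gaussian \emph{densities} directly, splitting the integrand into a determinant-difference piece and an exponent-difference piece and bounding each integral by hand (using $1-e^{-x}\le x$, a diagonalization argument for the determinants, etc.). You instead observe that $f$ is a degree-four polynomial in $z$ with $\mO(1)$ coefficients, so by Isserlis/Wick the map $\Sigma\mapsto\EE_{z\sim N(0,\Sigma)}[f(z)]$ is a degree-two polynomial in the entries of $\Sigma$, hence Lipschitz on $\{0\preceq\Sigma\preceq c_{\Sigma}I\}$. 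This is cleaner and shorter, avoids the density manipulations entirely, and makes the dependence of the constants on $c_\Sigma$, $c_K$, $c_\theta$ more transparent; the paper's density argument is more general in that it would also handle integrands that are not polynomial, but that generality is not needed here. You also make explicit two points the paper only asserts in the algorithm description: the exact conditional unbiasedness of $\widehat{f}$ via the sample-covariance correction (and the resulting requirement $N_1\ge 2$), and the PSD monotonicity $\Sigma^{(N_0)}_{K_t}\preceq\Sigma_{K_t}$ that gives the uniform-in-$N_0$ covariance bound for the second-moment estimate. The second-moment argument itself (Jensen over the $N$ i.i.d.\ trajectories, then Gaussian moment bounds for a degree-eight polynomial) matches the paper's.
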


\begin{rmk}
When we apply this lemma later, we will set $\delta^2 = \frac{1}{24} \frac{\sigma_{min}(D_{\ep})}{\kappa c_3} \mu_{\sigma} \ve$, and thus $\delta = \mO(\ve^{\frac12})$. By definition of the step sizes \eqref{eq:stepsize}, we have
\begin{equation}\label{eq:sample_error_bound}
    2 \alpha_t^2 \EE\left[\| \widehat{\nabla L_t}(\theta_t) \|_F^2 ~\Big|~ \mG_t\right] \le \frac{1}{8} \beta_t \dfrac{\sigma_{min}(D_{\ep})}{c_3} \ve.
\end{equation}
when \eqref{eq:critic_grad_bound} holds. This inequality \eqref{eq:sample_error_bound} will be used later and we can see that the step size has to be of order $\mO(\ve)$ to guarantee \eqref{eq:sample_error_bound}.
\end{rmk}
Next, we show a Lipschitz property for $\theta_K$ with respect to $K$.
\begin{lemma}\label{lem:thetaK}
For any two actor parameters $K$ and $K'$ such that $\|K\|, \|K'\| \le c_K$, $\|A-BK\|, \|A-BK'\| \le c_A$, and $\rho(A-BK), \rho(A-BK') \le \rho < 1$, we have
\begin{equation*}
    \| \theta_K - \theta_{K'} \|_F \le c_1 \|K-K'\|_F,
\end{equation*}
where the constant $c_1 = \mO(1)$ only depends on $A$, $B$, $R$, $\rho$, $c_A$, $c_K$, and $c_P$.
\end{lemma}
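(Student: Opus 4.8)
The plan is to reduce the claim to a Lipschitz estimate for $P_K$, since $\theta_K$ depends on $K$ only through $P_K$. From the block form \eqref{eq:theta_K} each block of $\theta_K$ is affine in $P_K$ with coefficients built from the fixed matrices $A,B,Q,R$; writing $C := \begin{bmatrix} A & B \end{bmatrix}$ one has $\theta_K = \begin{bmatrix} Q & 0 \\ 0 & R \end{bmatrix} + C\tp P_K C$, and therefore
\begin{equation*}
    \theta_K - \theta_{K'} = C\tp (P_K - P_{K'}) C, \qquad \|\theta_K - \theta_{K'}\|_F \le \|C\|^2\,\|P_K - P_{K'}\|_F .
\end{equation*}
So it suffices to prove $\|P_K - P_{K'}\|_F \le c\,\|K-K'\|_F$ for a constant $c = \mO(1)$.

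To do this I would derive a perturbed Lyapunov equation for the difference. Set $A_K := A-BK$, $\Delta K := K-K'$, $\Delta A := A_K - A_{K'} = -B\,\Delta K$, and $\Delta P := P_K - P_{K'}$. Subtracting the two defining identities \eqref{eq:PK} and regrouping, I would rearrange into
\begin{equation*}
    \Delta P - A_{K'}\tp \Delta P\, A_{K'} = S ,
\end{equation*}
where $S$ is a symmetric source term collecting $K\ptp R\,\Delta K + \Delta K\tp R\,K' + \Delta K\tp R\,\Delta K$ (from $K\tp R K - K\ptp R K'$) together with $A_{K'}\tp P_K \Delta A + \Delta A\tp P_K A_{K'} + \Delta A\tp P_K \Delta A$ (obtained by expanding $A_K\tp P_K A_K - A_{K'}\tp P_{K'} A_{K'}$ and moving $A_{K'}\tp \Delta P A_{K'}$ to the left). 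Since $\rho(A_{K'}) \le \rho < 1$ the discrete Lyapunov operator $X \mapsto X - A_{K'}\tp X A_{K'}$ is invertible, so $\Delta P = \sum_{s\ge 0} (A_{K'}\tp)^s S\, A_{K'}^s$ and hence $\|\Delta P\|_F \le \bigl(\sum_{s\ge 0}\|A_{K'}^s\|^2\bigr)\,\|S\|_F$.

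It then remains to control the two factors. For the source, using $\|K\|,\|K'\|\le c_K$, $\|A_{K'}\|\le c_A$, $\|P_K\|\le c_P$ (Lemma \ref{lem:bound}) and $\|\Delta A\|\le\|B\|\,\|\Delta K\|_F$, a direct estimate gives $\|S\|_F \le (2c_K\|R\| + 2c_A c_P\|B\|)\|\Delta K\|_F + (\|R\| + c_P\|B\|^2)\|\Delta K\|_F^2$; since $\|\Delta K\|_F$ is bounded by a constant multiple of $c_K$, the quadratic term is absorbed and $\|S\|_F \le C_S\,\|\Delta K\|_F$. Combining everything yields the claim with $c_1 = \|C\|^2\, C_S \sum_{s\ge 0}\|A_{K'}^s\|^2$.

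The hard part is the remaining factor $\sum_{s\ge 0}\|A_{K'}^s\|^2$, i.e.\ the operator norm of the inverse Lyapunov operator, which must be bounded \emph{uniformly} in $K'$ by a constant depending only on the allowed quantities. The spectral bound $\rho(A_{K'})\le\rho<1$ by itself does not control the transient growth of $\|A_{K'}^s\|$, so I would extract geometric decay by one of two routes. Route (a), matching the stated dependence on $\rho,c_A$ (and dimension), uses the resolvent representation $A_{K'}^s = \frac{1}{2\pi i}\oint_{|z|=\tilde\rho} z^s (zI-A_{K'})^{-1}\rd z$ with $\tilde\rho = \tfrac{1+\rho}{2}$, bounding $\|(zI-A_{K'})^{-1}\|$ on the contour through the adjugate together with $|\det(zI-A_{K'})|\ge(\tilde\rho-\rho)^d$ (this is where the dimension factor enters), to get $\|A_{K'}^s\|\le C(d,\rho,c_A)\,\tilde\rho^{\,s}$ and hence a convergent geometric sum. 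Route (b) is the simpler monotone comparison: iterating \eqref{eq:PK} gives $P_{K'} = \sum_{s\ge 0}(A_{K'}\tp)^s(Q+K\ptp R K')A_{K'}^s \ge \sigma_{min}(Q)\sum_{s\ge 0}(A_{K'}\tp)^s A_{K'}^s$, so $\sum_{s\ge 0}\|A_{K'}^s\|^2 \le \Tr(P_{K'})/\sigma_{min}(Q) \le d\,c_P/\sigma_{min}(Q)$ at the cost of an extra $\sigma_{min}(Q)$ dependence. Either way one obtains a uniform $\mO(1)$ bound, completing the estimate.
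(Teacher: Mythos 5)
Your proposal is correct and follows essentially the same route as the paper: reduce the claim to a Lipschitz bound on $P_K$ via $\theta_K-\theta_{K'}=C\tp(P_K-P_{K'})C$, derive a perturbed discrete Lyapunov equation for $\Delta P$, invert it by a Neumann series, and bound the source using $c_K$, $c_A$, $c_P$. The only differences are minor: the paper centers the difference equation on the mixed operator $X \mapsto X-(A-BK)\tp X(A-BK')$, which makes the source exactly linear in $K-K'$ (so there is no quadratic remainder to absorb), and it treats the uniform bound on $\sum_{s}\|A_{K'}^s\|^2$ more tersely (by reference to an earlier argument, with a constant depending on $c_A$, $\rho$, and the dimension) than your two explicit routes, which if anything make that step more rigorous.
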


With the above lemmas, we can establish the improvement by the critic update.
\begin{lemma}\label{lem:critic_improvement}
Let the step size be defined as in \eqref{eq:stepsize} and Assumption \ref{assump} hold. For any $\ve > 0$ that is sufficiently small, assume that \eqref{eq:error_grad} and \eqref{eq:critic_grad_bound} hold with $\delta^2 = \frac{1}{24} \frac{\sigma_{min}(D_{\ep})}{\kappa c_3} \mu_{\sigma} \ve$ for all $t$, then we have 
\begin{multline}\label{eq:critic_imporvement0}
\EE \left[ \| \theta_{t+1} - \theta_{K_{t+1}} \|^2_F ~\big|~ \mG_t \right] - \| \theta_t - \theta_{K_t} \|_F^2 \\
     \le - \frac43 \alpha_t \mu_{\sigma} \| \theta_t - \theta_{K_t} \|_F^2 + \frac14 \dfrac{\sigma_{min}(D_{\ep})}{c_3} \beta_t \ve + \bigl(\dfrac{3}{\alpha_t \mu_{\sigma}} + 2\bigr) \| \theta_{K_t} - \theta_{K_{t+1}} \|_F^2.
\end{multline}
\end{lemma}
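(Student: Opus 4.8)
The plan is to analyze the one-step critic update $\theta_{t+1} = \theta_t - \alpha_t \widehat{\nabla L_t}(\theta_t)$ by first comparing $\theta_{t+1}$ against the target $\theta_{K_t}$ (the true parameter for the \emph{current} policy), and only afterward paying the cost of the target moving from $\theta_{K_t}$ to $\theta_{K_{t+1}}$ due to the actor step. Concretely, I would insert the intermediate quantity $\theta_{K_t}$ and split
\begin{equation*}
    \|\theta_{t+1} - \theta_{K_{t+1}}\|_F^2 \le (1+\eta)\|\theta_{t+1} - \theta_{K_t}\|_F^2 + (1+\eta^{-1})\|\theta_{K_t} - \theta_{K_{t+1}}\|_F^2
\end{equation*}
via Young's inequality with a parameter $\eta>0$ to be chosen. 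The second term is already the last term of \eqref{eq:critic_imporvement0}, so the heart of the argument is a descent estimate for $\EE[\|\theta_{t+1} - \theta_{K_t}\|_F^2 \mid \mG_t]$, i.e.\ an SGD-style one-step contraction toward the minimizer $\theta_{K_t}$ of the $\mu_\sigma$-strongly convex loss $L_t$ (Lemma \ref{lem:critic_convex}).

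For that descent estimate I would expand
\begin{equation*}
    \|\theta_{t+1} - \theta_{K_t}\|_F^2 = \|\theta_t - \theta_{K_t}\|_F^2 - 2\alpha_t \langle \widehat{\nabla L_t}(\theta_t), \theta_t - \theta_{K_t}\rangle + \alpha_t^2 \|\widehat{\nabla L_t}(\theta_t)\|_F^2,
\end{equation*}
take conditional expectation given $\mG_t$, and handle the three pieces separately. The cross term is the main engine: writing $\EE[\widehat{\nabla L_t}(\theta_t)\mid\mG_t] = \nabla L_t(\theta_t) + b_t$ where $\|b_t\|_F \le \delta$ is the bias from Lemma \ref{lem:critic_gradient_accuracy}, I would use strong convexity, $\langle \nabla L_t(\theta_t), \theta_t - \theta_{K_t}\rangle \ge \mu_\sigma \|\theta_t - \theta_{K_t}\|_F^2$ (since $\nabla L_t(\theta_{K_t})=0$), to extract the negative drift $-2\alpha_t\mu_\sigma\|\theta_t-\theta_{K_t}\|_F^2$, while the bias contributes $2\alpha_t\delta\|\theta_t-\theta_{K_t}\|_F$, which I would absorb by another Young's inequality into a small fraction of the drift plus a term of order $\alpha_t\delta^2$. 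The quadratic term is bounded by $\alpha_t^2 c_L^2$ via \eqref{eq:critic_grad_bound}; with the choice $\delta^2 = \frac{1}{24}\frac{\sigma_{min}(D_\ep)}{\kappa c_3}\mu_\sigma\ve$ and the step-size relations, both the $\alpha_t\delta^2$ term and the $\alpha_t^2 c_L^2$ term fold into the allotted $\frac14\frac{\sigma_{min}(D_\ep)}{c_3}\beta_t\ve$ budget (compare \eqref{eq:sample_error_bound}).

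The delicate bookkeeping is choosing $\eta$ and distributing the $\mu_\sigma$ drift so that after the $(1+\eta)$ inflation the coefficient still reads exactly $-\frac43\alpha_t\mu_\sigma$ and the cross-term coefficient becomes exactly $\frac{3}{\alpha_t\mu_\sigma}+2$; taking $\eta = \Theta(\alpha_t\mu_\sigma)$ small makes $1+\eta^{-1} \approx \frac{3}{\alpha_t\mu_\sigma}+2$ and leaves $(1+\eta)(1-2\alpha_t\mu_\sigma+\tfrac12\cdot 2\alpha_t\mu_\sigma) \le 1-\frac43\alpha_t\mu_\sigma$ after retaining half the drift to kill the bias cross-term. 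I expect the \textbf{main obstacle} to be precisely this constant-chasing: verifying that, for all sufficiently small $\ve$ (hence small $\alpha_t$), the several Young's-inequality splits and the step-size inequalities \eqref{eq:stepsizes} simultaneously yield the clean coefficients $-\frac43\alpha_t\mu_\sigma$ and $\frac{3}{\alpha_t\mu_\sigma}+2$ rather than merely order-correct bounds, with all slack routed into the single $\frac14\frac{\sigma_{min}(D_\ep)}{c_3}\beta_t\ve$ error term.
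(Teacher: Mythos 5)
Your proposal is correct and follows essentially the same route as the paper's proof: both expand the one-step update, use the $\mu_\sigma$-strong convexity of $L_t$ for the negative drift, split off the sampling bias and second moment via Lemma \ref{lem:critic_gradient_accuracy}, and pay for the target drift $\theta_{K_t}\to\theta_{K_{t+1}}$ through a Young-type inequality with parameter of order $\alpha_t\mu_\sigma$ (the paper applies Cauchy--Schwarz to the explicit cross terms rather than the multiplicative $(1+\eta)$ split, but this is the same estimate in different packaging). The constant-chasing you flag does close: with $\eta=\tfrac13\alpha_t\mu_\sigma$ the inflated error budget is $(1+\eta)\bigl(\tfrac{3\alpha_t}{\mu_\sigma}\delta^2+\alpha_t^2c_L^2\bigr)$, which the choice of $\delta$ and \eqref{eq:sample_error_bound} keep below $\tfrac14\tfrac{\sigma_{min}(D_{\ep})}{c_3}\beta_t\ve$ for $\ve$ small.
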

Recall that $K_{t+1}$ is $\mG_t$-measurable.

\subsection{Analysis of the actor part}
In this subsection, we give the convergence result for the actor part. All proofs are deferred to the appendix. 
The first lemma demonstrates that the cost functional is roughly quadratic in $G_K$. Inequality \eqref{eq:gradient_dominant} has also been established in earlier works \citep{fazel2018global, fu2020single}. 
\begin{lemma}\label{lem:gradient_dominant}
Let $K$ be an actor parameter such that $\rho(A-BK) < 1$, we have  
\begin{equation}\label{eq:gradient_dominant}
    c_2 \Tr(G_K G_K\tp) \le J(K) - J(K^*) \le c_3 \Tr(G_K G_K\tp),
\end{equation}
with positive constants $c_2 = \frac{\sigma_{min}(D_{\ep})}{\|R\| + c_P \|B\|^2}$ and $c_3 = \frac{\|D_{K^*}\|}{\sigma_{min}(R)}$. 
\end{lemma}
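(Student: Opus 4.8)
The plan is to derive both inequalities from a single \emph{cost difference identity}: for any two stabilizing controllers $K$ and $K'$ (i.e.\ $\rho(A-BK), \rho(A-BK')<1$), writing $M := R + B\tp P_K B = \theta^{22}_K$ and $\Delta := K'-K$,
\begin{equation}\label{eq:cost_diff_plan}
    J(K') - J(K) = \Tr\bigl( D_{K'} \left[ \Delta\tp M \Delta + 2 \Delta\tp G_K \right] \bigr).
\end{equation}
To prove \eqref{eq:cost_diff_plan} I would use the average-cost performance difference lemma $J(K') - J(K) = \EE_{x\sim N(0,D_{K'}),\, u\sim\pi_{K'}}[Q_K(x,u) - V_K(x)]$, which follows by taking the stationary expectation under $K'$ of the advantage $Q_K - V_K$ and cancelling the $\EE[V_K(x')\mid x,u] - V_K(x)$ contribution by stationarity. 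Plugging in the explicit forms from Proposition \ref{prop:value_expression}, integrating out $u = -K'x + \sigma\omega$, and simplifying the resulting quadratic form in $x$ with the consistency relation $P_K = \theta^{11}_K - \theta^{12}_K K - K\tp\theta^{21}_K + K\tp\theta^{22}_K K$ (which is \eqref{eq:PK} rewritten via \eqref{eq:theta_K}), the coefficient matrix collapses to $\Delta\tp M\Delta + \Delta\tp G_K + G_K\tp\Delta$; taking $\EE_{x\sim N(0,D_{K'})}[x\tp(\cdot)x] = \Tr(D_{K'}(\cdot))$ and using trace symmetry of the cross terms yields \eqref{eq:cost_diff_plan}. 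The $\sigma^2$ offsets in $Q_K$ and the noise term in $u$ cancel because $\Tr(\theta^{22}_K) = \Tr(R + B\tp P_K B) = \Tr(R + P_K BB\tp)$.

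For the upper bound I would set $K' = K^*$ and complete the square in $\Delta$: under the trace against the symmetric matrix $D_{K^*}$,
\begin{equation*}
    \Delta\tp M\Delta + 2\Delta\tp G_K = (\Delta + M^{-1}G_K)\tp M (\Delta + M^{-1}G_K) - G_K\tp M^{-1} G_K.
\end{equation*}
Since $P_K\succeq 0$ (series expansion of \eqref{eq:PK}) gives $M\succeq R\succ0$, the first term is positive semidefinite, so its trace against $D_{K^*}\succeq0$ is nonnegative and \eqref{eq:cost_diff_plan} yields $J(K) - J(K^*) \le \Tr(D_{K^*} G_K\tp M^{-1} G_K)$. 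Bounding $M^{-1}\preceq R^{-1}\preceq \sigma_{min}(R)^{-1} I$ and $D_{K^*}\preceq \|D_{K^*}\| I$ then gives $J(K) - J(K^*) \le \frac{\|D_{K^*}\|}{\sigma_{min}(R)}\Tr(G_K G_K\tp)$, which is the claimed bound with $c_3$.

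For the lower bound I would instead take the one-step greedy (policy-improvement) controller $K' = K - M^{-1}G_K = (R + B\tp P_K B)^{-1} B\tp P_K A$, for which $\Delta = -M^{-1}G_K$ and the bracket in \eqref{eq:cost_diff_plan} simplifies to $-G_K\tp M^{-1}G_K$, giving $J(K') - J(K) = -\Tr(D_{K'} G_K\tp M^{-1}G_K)$. By optimality of $K^*$ we have $J(K')\ge J(K^*)$, hence $J(K) - J(K^*) \ge J(K) - J(K') = \Tr(D_{K'}G_K\tp M^{-1}G_K)$. Using $D_{K'}\succeq \sigma_{min}(D_{\ep}) I$ (from \eqref{eq:DK_series}, $D_{K'}\succeq D_{\ep}$) together with $M = R + B\tp P_K B \preceq (\|R\| + c_P\|B\|^2) I$ (via $P_K\le c_P$ from Lemma \ref{lem:bound}), so that $M^{-1}\succeq (\|R\| + c_P\|B\|^2)^{-1}I$, yields $J(K) - J(K^*) \ge \frac{\sigma_{min}(D_{\ep})}{\|R\| + c_P\|B\|^2}\Tr(G_K G_K\tp)$, matching $c_2$.

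The main obstacle is establishing \eqref{eq:cost_diff_plan} cleanly and, relatedly, justifying that the greedy controller $K' = (R+B\tp P_K B)^{-1}B\tp P_K A$ is itself stabilizing, so that $D_{K'}$ and $J(K')$ are well defined and finite and the bound $D_{K'}\succeq\sigma_{min}(D_{\ep})I$ applies. The latter is the classical fact that one step of LQR policy iteration from a stabilizing $K$ produces a stabilizing controller of no larger cost, which I would either cite or verify through the monotonicity $P_{K'}\preceq P_K$ of the Riccati map. The algebraic collapse of the advantage's quadratic form to $\Delta\tp M\Delta + \Delta\tp G_K + G_K\tp\Delta$, while routine, is the most error-prone computation, and is precisely where the structure $G_K = \theta^{22}_K K - \theta^{21}_K$ must be tracked carefully.
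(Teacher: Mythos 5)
Your proposal is correct and follows essentially the same route as the paper: the cost-difference identity \eqref{eq:cost_diff_plan} is exactly the paper's formula \eqref{eq:JK_diff_gen} (the paper derives it by telescoping the series for $P_K$ along the trajectory generated by $K'$, which is the performance difference lemma you invoke), and both proofs then complete the square, take $K'=K^*$ for the upper bound and the greedy $K'=K-(R+B\tp P_KB)^{-1}G_K$ with $J(K')\ge J(K^*)$ for the lower bound, arriving at the same constants. Your remark about needing to verify that the greedy controller is stabilizing is a point the paper itself glosses over, so flagging it is a small improvement rather than a deviation.
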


We recall that $\|\cdot\|$ denotes the operator norm of a matrix.
We also recall that $K^*$ is the optimal control parameter that is given by $K^* = (R + B\tp P^*B)^{-1} B\tp P^* A$ (see \eqref{eq:Riccati} for definition of $P^*$). Next lemma establishes the improvement of the actor update.

\begin{lemma}[Improvement in the actor update]\label{lem:actor_improvement}
Let the actor update be defined by \eqref{eq:actor_update} and Assumption \ref{assump} hold, then 
\begin{align*}
    & J(K_{t+1}) - J(K_t) \le -\beta_t \dfrac{\sigma_{min}(D_{\ep})}{c_3} (J(K_t) - J(K^*))\\
    & \quad - \beta_t \left[\sigma_{min}(D_{\ep}) - \beta_t c_D (\|R\| + c_P \|B\|^2) \right]  \|\widehat{G}_{K_t}\|_F^2 + \beta_t c_D \|G_{K_t} - \widehat{G}_{K_t}\|_F^2
\end{align*}
\end{lemma}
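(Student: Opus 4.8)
The plan is to start from an exact \emph{cost difference formula} expressing $J(K_{t+1}) - J(K_t)$ as a quadratic in the increment $K_{t+1} - K_t = -\beta_t \widehat{G}_{K_t}$. Concretely, I would first establish (in the spirit of \cite{fazel2018global}) the identity
\[
J(K_{t+1}) - J(K_t) = -2\beta_t \Tr\bigl(D_{K_{t+1}} \widehat{G}_{K_t}\tp G_{K_t}\bigr) + \beta_t^2 \Tr\bigl(D_{K_{t+1}} \widehat{G}_{K_t}\tp (R + B\tp P_{K_t} B) \widehat{G}_{K_t}\bigr),
\]
where $G_{K_t} = (R + B\tp P_{K_t} B) K_t - B\tp P_{K_t} A$ is the true natural-gradient direction. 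This identity would be derived from the explicit cost expression $J(K) = \Tr(D_{\ep} P_K) + \sigma^2 \Tr(R)$ in Proposition~\ref{prop:cost_expression}, together with the value-function formula in Proposition~\ref{prop:value_expression} and the Lyapunov equation \eqref{eq:PK}, by telescoping the one-step advantage of $\pi_{K_{t+1}}$ relative to $V_{K_t}$ along a trajectory drawn from the stationary distribution $N(0, D_{K_{t+1}})$. Validity of the formula requires $\rho(A - BK_{t+1}) < 1$, which is granted by Assumption~\ref{assump}.

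The quadratic term is controlled directly. Since $D_{K_{t+1}} \le c_D$ by Lemma~\ref{lem:bound} and $R + B\tp P_{K_t} B \le \|R\| + c_P \|B\|^2$ (using $P_{K_t} \le c_P$, again Lemma~\ref{lem:bound}), I would bound
\[
\beta_t^2 \Tr\bigl(D_{K_{t+1}} \widehat{G}_{K_t}\tp (R + B\tp P_{K_t} B) \widehat{G}_{K_t}\bigr) \le \beta_t^2 c_D (\|R\| + c_P\|B\|^2) \|\widehat{G}_{K_t}\|_F^2,
\]
which is exactly the positive $\beta_t^2$-piece appearing inside the bracket $-\beta_t[\sigma_{min}(D_{\ep}) - \beta_t c_D(\|R\| + c_P\|B\|^2)]\|\widehat{G}_{K_t}\|_F^2$ of the claim.

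For the cross term I would use the polarization identity, valid under the trace against the symmetric matrix $D_{K_{t+1}}$ (which makes $\Tr(D_{K_{t+1}} \widehat{G}_{K_t}\tp G_{K_t}) = \Tr(D_{K_{t+1}} G_{K_t}\tp \widehat{G}_{K_t})$):
\[
-2\Tr\bigl(D_{K_{t+1}} \widehat{G}_{K_t}\tp G_{K_t}\bigr) = \Tr\bigl(D_{K_{t+1}} (G_{K_t} - \widehat{G}_{K_t})\tp (G_{K_t} - \widehat{G}_{K_t})\bigr) - \Tr\bigl(D_{K_{t+1}} G_{K_t}\tp G_{K_t}\bigr) - \Tr\bigl(D_{K_{t+1}} \widehat{G}_{K_t}\tp \widehat{G}_{K_t}\bigr).
\]
The first term is at most $c_D \|G_{K_t} - \widehat{G}_{K_t}\|_F^2$ by $D_{K_{t+1}} \le c_D$, producing the critic-error term. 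For the second term, $D_{K_{t+1}} \ge \sigma_{min}(D_{\ep})$ (Lemma~\ref{lem:bound}) gives $\Tr(D_{K_{t+1}} G_{K_t}\tp G_{K_t}) \ge \sigma_{min}(D_{\ep}) \Tr(G_{K_t} G_{K_t}\tp)$, and then Lemma~\ref{lem:gradient_dominant} in the form $\Tr(G_{K_t} G_{K_t}\tp) \ge \frac{1}{c_3}(J(K_t) - J(K^*))$ converts this into the $-\beta_t \frac{\sigma_{min}(D_{\ep})}{c_3}(J(K_t) - J(K^*))$ contribution. The third term is at least $\sigma_{min}(D_{\ep}) \|\widehat{G}_{K_t}\|_F^2$, giving the remaining $-\beta_t \sigma_{min}(D_{\ep})\|\widehat{G}_{K_t}\|_F^2$. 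Multiplying the identity by $\beta_t$ and collecting the four contributions reproduces the claimed inequality verbatim.

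The main obstacle is the first step: deriving the exact cost difference formula in the \emph{stochastic} policy setting. Unlike the deterministic LQR of \cite{fazel2018global}, here every trajectory carries the exploration noise $\sigma \omega_s$, so the telescoping advantage computation produces extra $\sigma^2$-dependent constant contributions. The crux is to verify that these constants are identical for $\pi_{K_t}$ and $\pi_{K_{t+1}}$ and hence cancel in the difference, leaving precisely the quadratic form above with the next-iterate state covariance $D_{K_{t+1}}$. Once this identity is in hand, the remaining estimates are elementary trace inequalities built only on Lemma~\ref{lem:bound} and Lemma~\ref{lem:gradient_dominant}.
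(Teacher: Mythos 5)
Your proposal is correct and follows essentially the same route as the paper: the exact cost-difference identity you describe is precisely \eqref{eq:JK_diff_gen}, which the paper derives (via the telescoping/advantage computation along a trajectory driven by the new policy, with the $\sigma^2\Tr(R)$ constants cancelling in $J(K)-J(K')$) in the proof of Lemma~\ref{lem:gradient_dominant}, and the subsequent polarization, the bounds $\sigma_{min}(D_{\ep}) \le D \le c_D$, $P_{K_t} \le c_P$, and the conversion $\Tr(G_{K_t}G_{K_t}\tp) \ge \frac{1}{c_3}(J(K_t)-J(K^*))$ are exactly the paper's steps. The only cosmetic difference is that you consistently keep the covariance $D_{K_{t+1}}$ where the paper's displayed computation switches to $D_{K_t}$; since both matrices obey the same two-sided bounds from Lemma~\ref{lem:bound}, the final inequality is unaffected.
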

\begin{rmk}
This actor improvement lemma is a generalization of Lemma 15 in \cite{fazel2018global}. Their lemma shows an improvement of policy gradient with accurate critic, while our lemma shows that there are extra terms when we have stochastic estimate of the critic.
\end{rmk}

\section{Numerical Examples}\label{sec:numerical}
In this section, we present some numerical examples to validate our theoretical results. The code can be found at \cite{zhou2021actorcriticPDE-git}. 
We consider two examples: the first one has $d=2$ and $k=3$:
$$A = \begin{bmatrix} 0.5&0\\ 0&0.5 \end{bmatrix}, ~~ B = \begin{bmatrix} 0.2&0&0.1\\ 0&0.2&0.1 \end{bmatrix}, ~~ Q=\begin{bmatrix} 1&0\\ 0&0.8 \end{bmatrix}, ~~ R=\begin{bmatrix} 1&0&0\\ 0&1&0 \\ 0&0&0.5 \end{bmatrix}, ~~ D_{\xi} = \begin{bmatrix} 1&0\\ 0&1 \end{bmatrix},$$
and $\sigma=1$.
The other one has $d=4$ and $k=3$:
$$A = \begin{bmatrix} 0.5&0.1&0&0\\ 0.1&0.5&0.1&0\\ 0&0.1&0.5&0\\ 0&0&0&0.5 \end{bmatrix}, ~~ B = \begin{bmatrix} 0.3&0.1&0\\ 0.1&0.3&0.1\\ 0&0.1&0.3\\ 0.1&0.1&0.1 \end{bmatrix}, ~~ Q=\begin{bmatrix} 1&0&0&0\\ 0&1&0.1&0\\ 0&0.1&1&0.1\\ 0&0&0.1&1 \end{bmatrix}, $$
$$R=\begin{bmatrix} 1&0.1&0\\ 0.1&1&0.1 \\ 0&0.1&1 \end{bmatrix}, ~~ D_{\xi} = \begin{bmatrix} 1&0&0.1&0\\ 0&1&0&0\\ 0.1&0&1&0.1\\ 0&0&0.1&1 \end{bmatrix},$$
and $\sigma=1$. In all the tests, we set $N = N_0 = N_1 = 100$ for simplicity. We test for $T=125, 250, 500, 1000, 2000, 4000$. In each example, we set the step sizes to be $\alpha_t = \beta_t = \frac{4}{T}$. In order to save time, we multiply the step sizes by 3 for the first $T/2$ steps.

Figure \ref{fig:learning_curve} shows the learning curves for the two example with step size $\alpha_t=\beta_t=0.001$. The error is the average of $10$ independent runs, and we also show the standard deviations. In the beginning, the error curves are nearly straight lines, which coincide with our one-step improvement analysis in the previous section. Then the errors become static because the algorithm has reached its capacity.

\begin{figure*}[t!]
    \centering
    \includegraphics[width=0.49\textwidth]{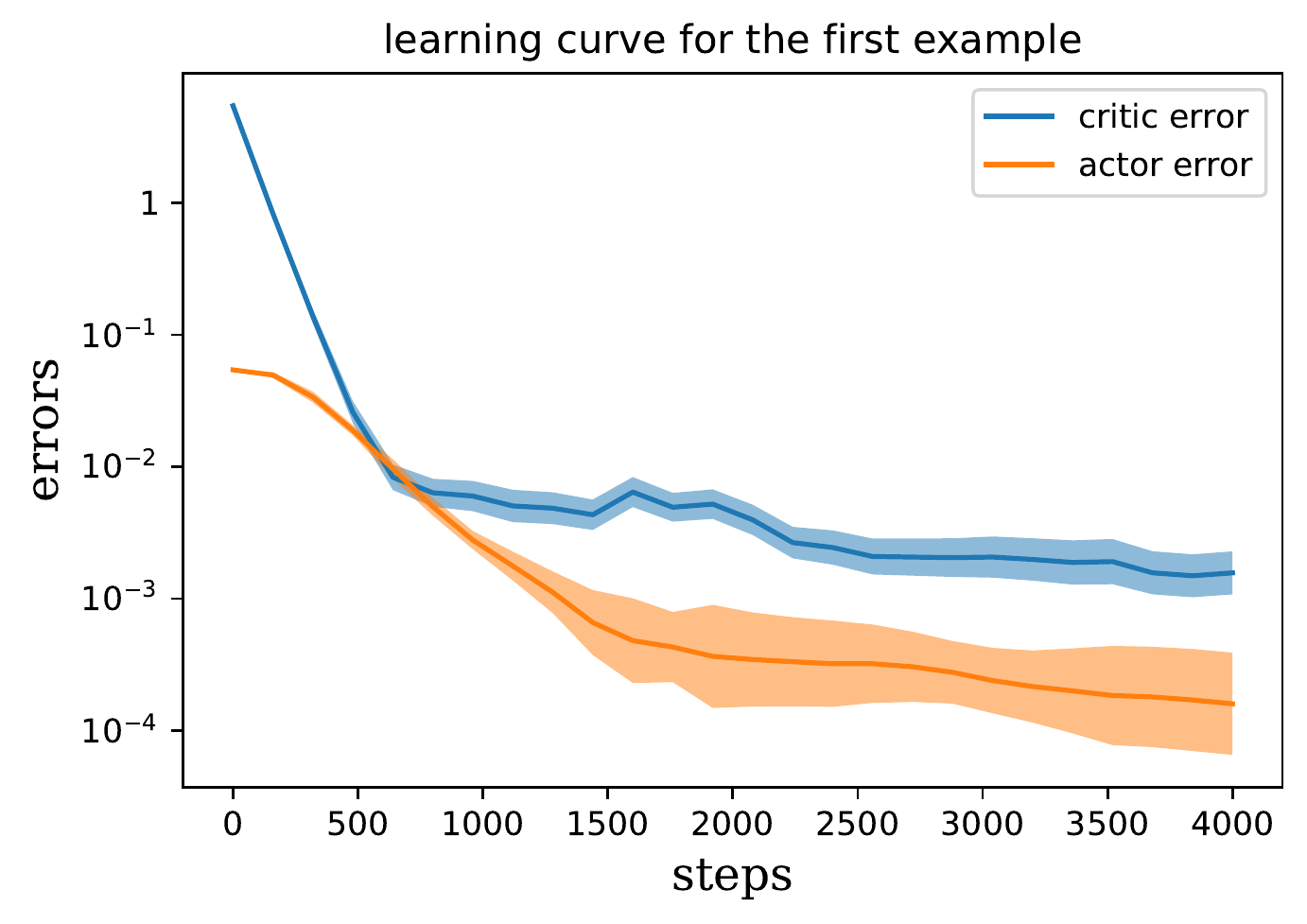}
    \includegraphics[width=0.49\textwidth]{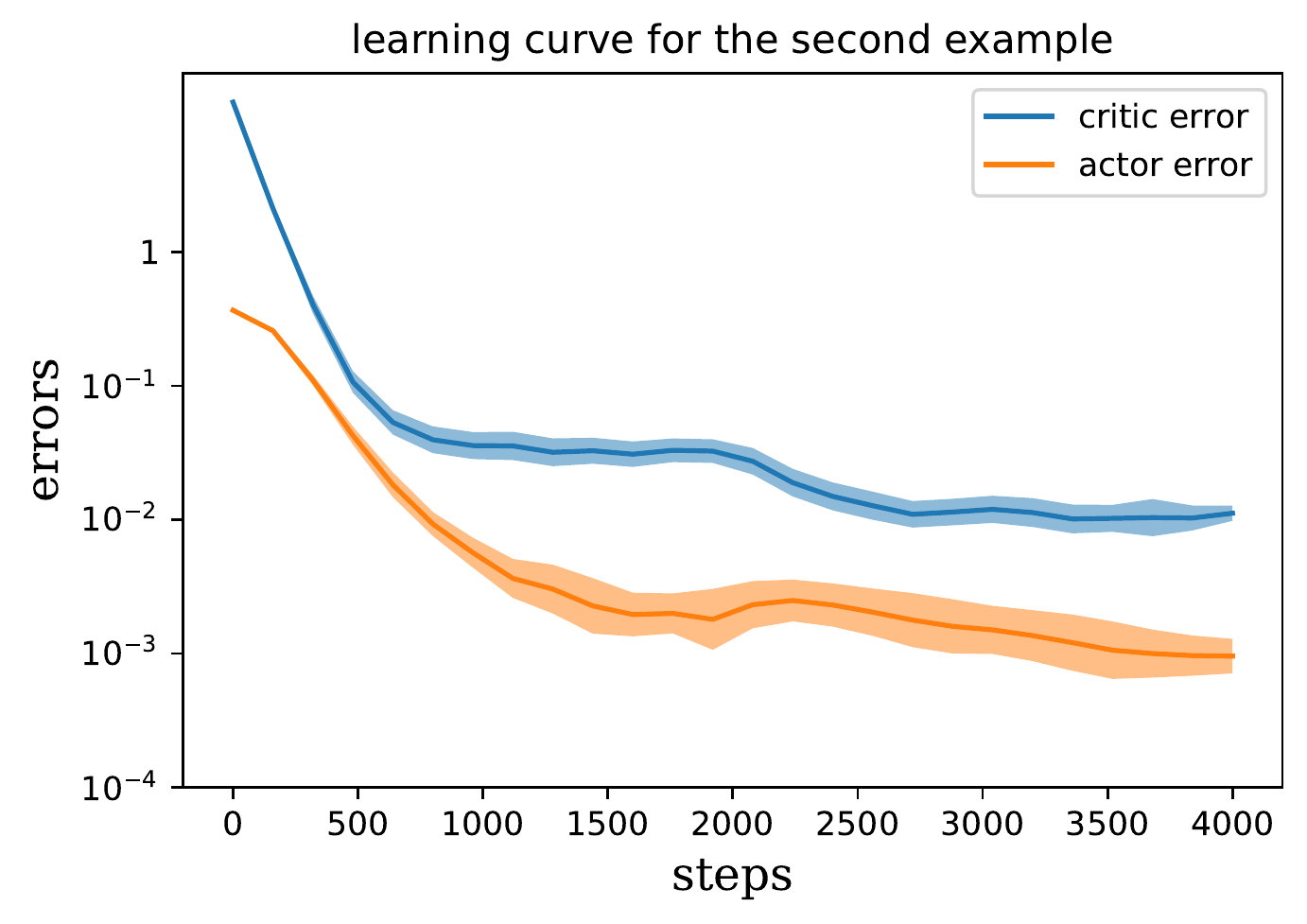}
    \caption{The error curves for the two examples with step size $\alpha_t=\beta_t=0.001$. The errors are the average of $10$ independent runs, with standard deviation plotted.}
    \label{fig:learning_curve}
\end{figure*}

In order to obtain a convergence rate, we also test different step sizes, which is shown in Figure \ref{fig:convergence_rate}. In the tests, we keep $T \alpha_t = T \beta_t$ as a constant. The horizontal axis marks the number of steps $T$, ranging from 125 to 4000. We take a $log_2$ transform of $T$. The vertical axis is the final critic and actor errors (after a $log_2$ transform). A linear regression indicates that the slopes of the four error curves are all $-1.0$, which confirms our theoretical results in the previous section.

\begin{figure*}[t!]
    \centering
    \includegraphics[width=0.49\textwidth]{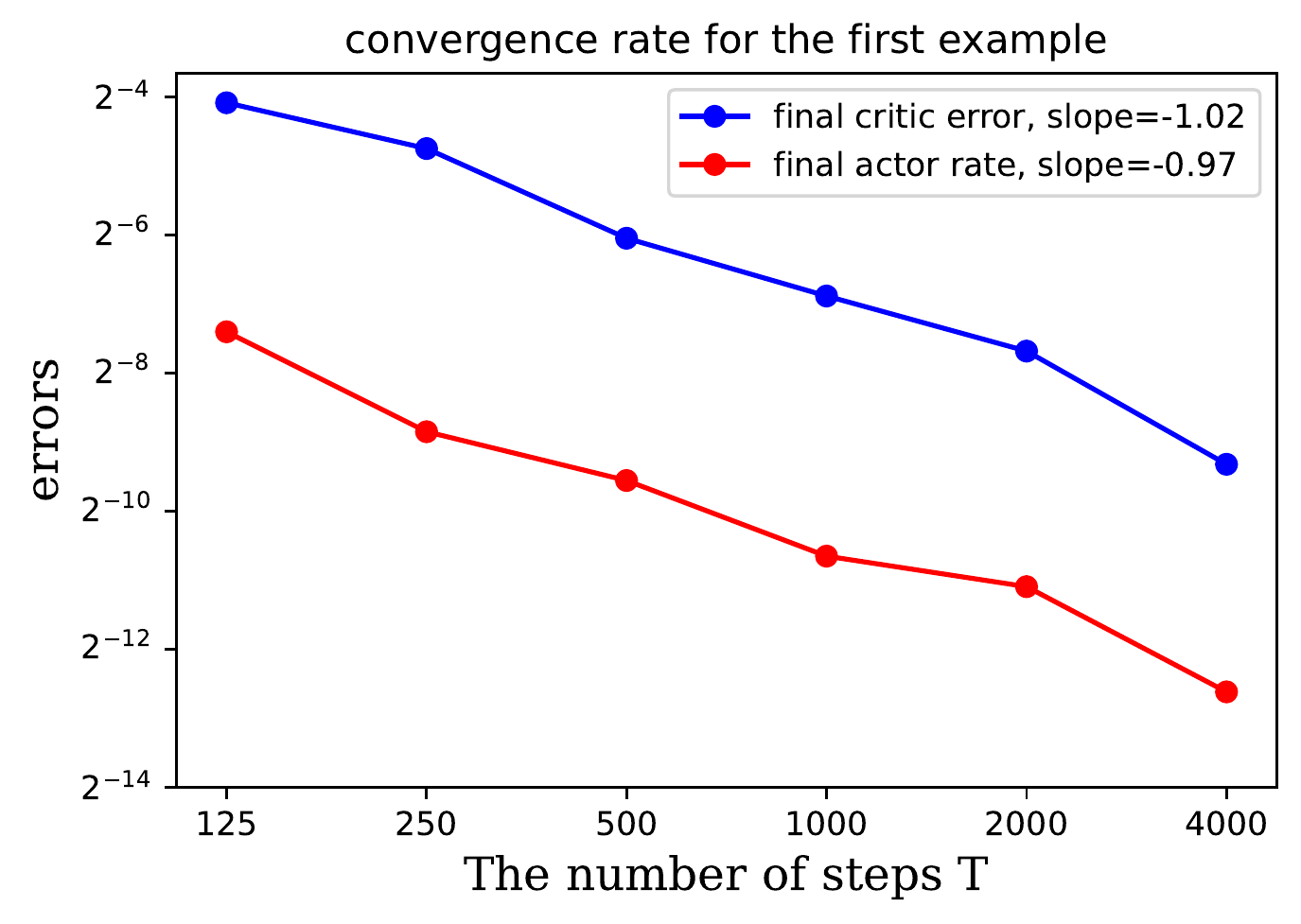}
    \includegraphics[width=0.49\textwidth]{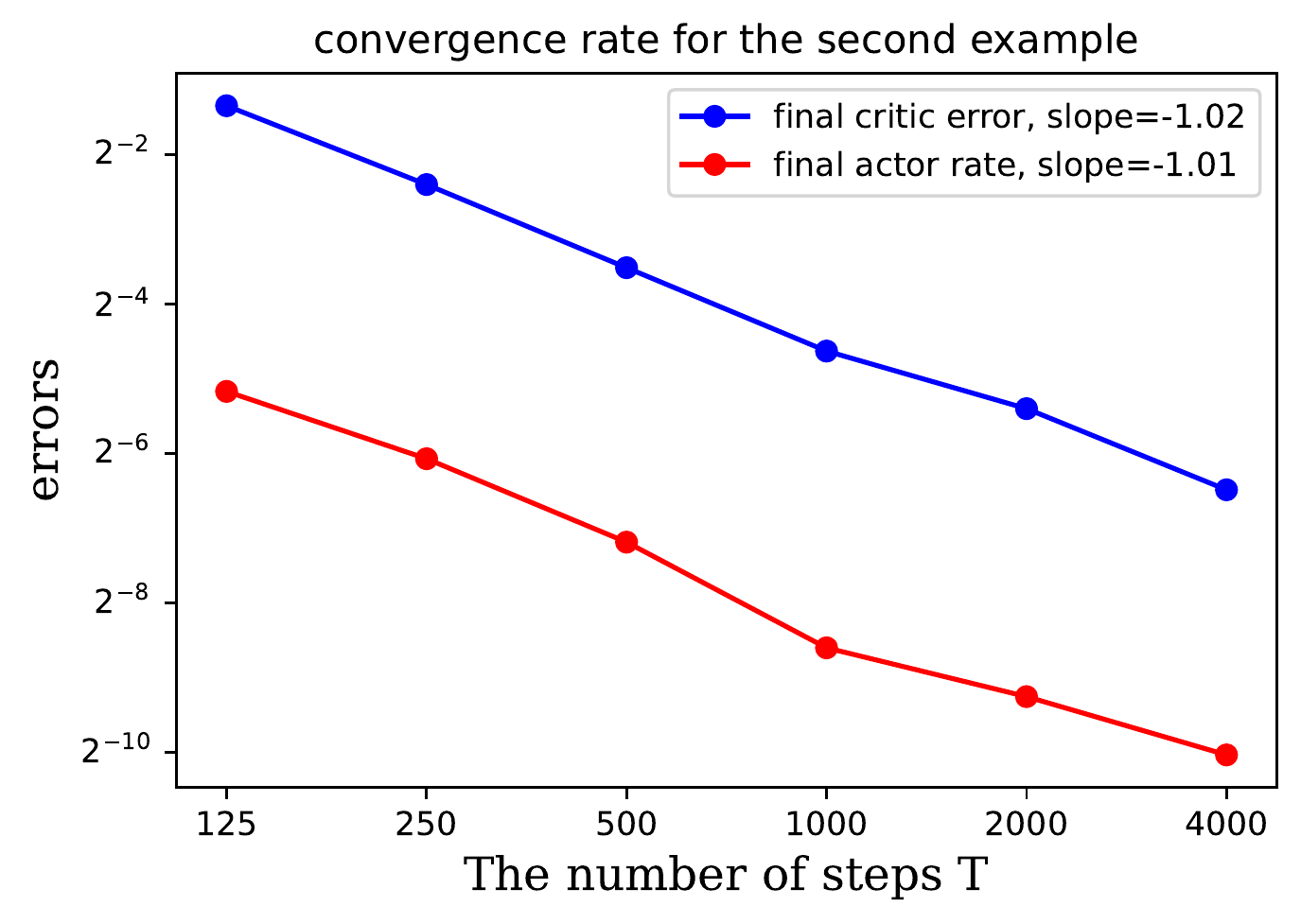}
    \caption{The convergence rate for the two examples with the numbers of steps ranging from $T=125$ to $T=4000$ and step size $\frac{4}{T}$. Each error is the average of $10$ independent runs. The slope for the four error curves are all $-1.0$.}
    \label{fig:convergence_rate}
\end{figure*}

We also track the norm in Assumption \ref{assump}. In the numerical tests, the maximum of $\rho(A-BK_t)$, $\|A-BK_t\|$, $\|E_t\|$, $\|K_t\|$, and $\|\theta_t\|_F$ for the first and second examples are $0.524$, $0.529$, $0.586$, $0.329$, $2.641$ and $0.662$, $0.662$, $0.867$, $0.498$, $4.254$ respectively. This further confirms that Assumption \ref{assump} is reasonable.


\acks{This work is supported in part by the National Science Foundation via grants DMS-2012286 and CCF-1934964 (Duke TRIPODS).}


\newpage

\appendix
\section{Proofs}
Throughout the proof, we will frequently use two basic properties in linear algebra. So we state them here. The first one is that if $X$ is a (symmetric and) positive semi-definite matrix and $Y$ is of the same shape, then $\Tr(XY) \le \Tr(X) \|Y\|$, where we recall that $\|\cdot\|$ is the operator norm of a matrix. The second property is a direct corollary of the first one: for any matrices $X$ and $Y$ of proper shapes, we have $\|XY\|_F \le \|X\| \, \|Y\|_F$
\subsection{Proofs for results in Section \ref{sec:background} and Section \ref{sec:algorithm}}
\begin{proof}[Proof of Proposition \ref{prop:cost_expression}]
Since $\rho(A-BK) < 1$, we know from definition \eqref{eq:PK} that the expression for $P_K$ in series is
\begin{equation}\label{eq:PK_series}
    P_K = \sum_{s=0}^{\infty} ((A-BK)\tp)^s (Q + K\tp R K) (A-BK)^s.
\end{equation}
Give the state $x_s$, the conditional expectation of one-step cost is
\begin{equation}\label{eq:one_step_cost}
\begin{aligned}
    \EE[c(x_s, u_s) | x_s] &= x_s\tp Q x_s + \EE_{\omega_s \sim N(0, Id)} [(-K x_s + \sigma \omega_s)\tp R (-K x_s + \sigma \omega_s)] \\
    &= x_s\tp (Q + K\tp R K) x_s + \sigma^2 \Tr(R).
\end{aligned}
\end{equation}
So the total cost is
\begin{equation*}
\begin{aligned}
     J(K) & = \lim_{S \to \infty} \EE_K \left[ \dfrac{1}{S} \sum_{s=0}^{S-1} c(x_s, u_s) \right] = \lim_{S \to \infty} \EE_K \left[ \dfrac{1}{S} \sum_{s=0}^{S-1} \EE[c(x_s, u_s) | x_s] \right] \\
     & = \lim_{S \to \infty} \EE_K \left[ \dfrac{1}{S} \sum_{s=0}^{S-1} x_s\tp (Q + K\tp R K) x_s\right] + \sigma^2 \Tr(R) \\
     & = \EE_K[x\tp (Q + K\tp R K) x] + \sigma^2 \Tr(R) \\
     & = \Tr\left[ \EE_K[xx\tp] (Q + K\tp R K) \right] + \sigma^2 \Tr(R) = \Tr\left[ D_K (Q + K\tp R K) \right] + \sigma^2 \Tr(R) \\
     & = \Tr\left[ D_K (P_K - (A-BK)\tp P_K (A-BK)) \right] + \sigma^2 \Tr(R) \\
     & = \Tr\left[ (D_K - (A-BK) D_K (A-BK)\tp) P_K \right] + \sigma^2 \Tr(R) = \Tr[D_{\ep} P_K] + \sigma^2 \Tr(R).
\end{aligned}
\end{equation*}
So \eqref{eq:JK} holds. Next, we derive the expression for $\nabla_K J(K)$. We need a simple formula: if the shape of $M$ is the same as the shape of $K$, then $\nabla_K \Tr(M\tp K) = \nabla_K \Tr(M K\tp) = M$. Since $J(K) = \Tr\left[ D_K (Q + K\tp R K) \right] + \sigma^2 \Tr(R)$, we have
\begin{equation}\label{eq:nabla_J}
    \nabla_K J(K) = 2RK D_K + \nabla_K \Tr[D_K Q_0] |_{Q_0 = Q + K\tp R K}.
\end{equation}
We recall that 
$$D_K = D_{\ep} + (A-BK) D_K (A-BK)\tp.$$
Therefore, 
\begin{equation}\label{eq:recursion}
\begin{aligned}
     & \quad \nabla_K \Tr[D_K Q_0] = \nabla_K \Tr[(D_{\ep} + (A-BK) D_K (A-BK)\tp) Q_0] \\
     & = -B\tp (Q_0 + Q_0\tp) (A-BK) D_K + \nabla_K \Tr[D_K Q_1] |_{Q_1 = (A-BK)\tp Q_0 (A-BK)} \\
     & = -2B\tp Q_0 (A-BK) D_K + \nabla_K \Tr[D_K Q_1] |_{Q_1 = (A-BK)\tp Q_0 (A-BK)}
\end{aligned}
\end{equation}
where we used $Q_0 = Q_0\tp$ in the last equality. Therefore, we can apply \eqref{eq:recursion} recursively and obtain
\begin{equation}\label{eq:recursion2}
\begin{aligned}
     & \quad \nabla_K \Tr[D_K Q_0] |_{Q_0 = Q + K\tp R K} \\
     & = -2B\tp (Q + K\tp R K) (A-BK) D_K + \nabla_K \Tr[D_K Q_1] |_{Q_1 = (A-BK)\tp (Q + K\tp R K) (A-BK)} \\
     & = -2B\tp (Q + K\tp R K) (A-BK) D_K -2B\tp (A-BK)\tp (Q + K\tp R K) (A-BK)^2 D_K \\
     & \quad + \nabla_K \Tr[D_K Q_2] |_{Q_2 = ((A-BK)\tp)^2 (Q + K\tp R K) (A-BK)^2} \\
     & = \cdots \\
     & = -\sum_{s=0}^{\infty} 2B\tp ((A-BK)\tp)^s (Q + K\tp R K) (A-BK)^{s+1} D_K \\
     & = -2 B\tp P_K (A-BK) D_K
\end{aligned}
\end{equation}
where the assumption $\rho(A-BK) < 1$ guarantees that the series converges and the remaining term vanishes.
Substituting \eqref{eq:recursion2} into \eqref{eq:nabla_J}, we obtain
$$\nabla_K J(K) = 2RK D_K - 2 B\tp P_K (A-BK) D_K = 2 \left[ (R + B\tp P_K B)K - B\tp P_K A \right] D_K.$$
\end{proof}

\begin{proof}[Proof of Proposition \ref{prop:value_expression}]
If we start with $x_0 = x$, since the state dynamic is
$$x_{s+1} = (A -BK) x_s + \ep_s$$
with $\ep_s \sim N(0, D_{\ep})$, the state distribution is
$$x_s \sim N\left( (A-BK)^s x,~ \sum_{i=0}^{s-1} (A-BK)^i D_{\ep} ((A-BK)\tp)^i \right) =: N\left((A-BK)^s x,~ D^{(s)}_K\right).$$
Therefore, by definition, the value function is
\begin{equation*}
\begin{aligned}
     V_K(x) & = \sum_{s=0}^{\infty} \left\{ \EE_K \left[ c(x_s, u_s) ~|~ x_0=x \right] - J(K) \right\} \\
     & = \sum_{s=0}^{\infty} \left\{ \EE_K \left[ x_s\tp (Q + K\tp R K) x_s ~|~ x_0=x \right] + \sigma^2 \Tr(R) - J(K) \right\} \\
     & = \sum_{s=0}^{\infty} \left\{ \Tr\left( \EE_K \left[x_s x_s\tp  ~|~ x_0=x \right] (Q + K\tp R K) \right) - \Tr[D_{\ep} P_K] \right\} \\
     & = \sum_{s=0}^{\infty} \left\{ \Tr\left[ \left((A-BK)^s xx\tp ((A-BK)\tp)^s + D^{(s)}_K \right) (Q + K\tp R K) \right]  - \Tr[D_{\ep} P_K] \right\},
\end{aligned}
\end{equation*}
where the second equality is by \eqref{eq:one_step_cost}, the third equality is by \eqref{eq:JK}.
Therefore,
\begin{equation*}
\begin{aligned}
     & \quad V_K(x) \\
     & = x\tp P_K x + \sum_{s=0}^{\infty} \left\{  \Tr\left[ \left(\sum_{i=0}^{s-1} (A-BK)^i D_{\ep} ((A-BK)\tp)^i \right) (Q + K\tp R K) \right] \right.\\
     & \hspace{1.1in} \left.- \Tr\left[D_{\ep} \left( \sum_{i=0}^{\infty} ((A-BK)\tp)^i (Q + K\tp R K) (A-BK)^i \right)\right] \right\} \\
     & = x\tp P_K x - \sum_{s=0}^{\infty} \Tr\left[ \left(\sum_{i=s}^{\infty} (A-BK)^i D_{\ep} ((A-BK)\tp)^i \right) (Q + K\tp R K) \right]\\
     & = x\tp P_K x - \sum_{s=0}^{\infty} \sum_{j=0}^{\infty} \Tr\left[ \left( (A-BK)^s D_{\ep} ((A-BK)\tp)^s \right) \right. \\
     & \hspace{1.5in} \left. \left( ((A-BK)\tp)^j (Q + K\tp R K) (A-BK)^j \right)  \right]\\
     & = x\tp P_K x - \sum_{s=0}^{\infty} \left\{  \Tr\left[ \left( (A-BK)^s D_{\ep} ((A-BK)\tp)^s \right) P_K \right] \right\} =  x\tp P_K x - \Tr[D_K P_K],
\end{aligned}
\end{equation*}
where we have used the series expressions for $P_K$ \eqref{eq:PK_series} and $D_K$ \eqref{eq:DK_series}. The assumption $\rho(A-BK)<1$ guarantees that all the series above converge. Next, we compute the state-value function $Q_K(x, u)$. Recall that $Q_K(x,u)$ is the expected extra cost if we start at $x_0=x$, take a first action $u_0=u$ and then follow the policy $\pi_K$. Therefore,
\begin{equation*}
\begin{aligned}
     & \quad Q_K(x, u) = c(x,u) - J(K) + \EE[V_K(x') ~|~ x,u] \\
     & = x\tp Q x + u\tp R u - \Tr[D_{\ep} P_K] - \sigma^2 \Tr(R) + \EE_{x' \sim N(Ax+Bu, D_{\xi})}[x\ptp P_K x' - \Tr[D_K P_K]] \\
     & = x\tp Q x + u\tp R u - \Tr[D_{\ep} P_K] - \sigma^2 \Tr(R) + \Tr\left[\EE_{x' \sim N(Ax+Bu, D_{\xi})}[x' x\ptp ] P_K \right] - \Tr[D_K P_K] \\
     & = x\tp Q x + u\tp R u - \Tr[D_{\ep} P_K + \sigma^2 R + D_K P_K] + \Tr\left[ \left((Ax+Bu) (Ax+Bu)\tp + D_{\xi}\right) P_K \right] \\
     & = x\tp Q x + u\tp R u - \Tr[(D_{\ep} - D_{\xi}) P_K + \sigma^2 R + D_K P_K] + (Ax+Bu)\tp P_K (Ax+Bu)\\
     & = \begin{bmatrix}  x\tp & u\tp \end{bmatrix} \begin{bmatrix} Q + A\tp P_K A & A\tp P_K B \\ B\tp P_K A & R + B\tp P_K B \end{bmatrix} \begin{bmatrix}  x \\ u \end{bmatrix} - \sigma^2 \Tr(R + P_K BB\tp) - \Tr(D_K P_K).
\end{aligned}
\end{equation*}
\end{proof}

\begin{proof}[Proof of Proposition \ref{prop:rescaled_gradient}]
The distribution of policy is $\pi_K(u | x) \sim N(-Kx, \sigma^2 I_k)$, with probability density 
$$\pi_K(u | x) = (2\pi \sigma^2)^{-k/2} \exp\left( - \frac{1}{2\sigma^2} |u + Kx|^2 \right).$$
Therefore,
$$\log \pi_K(u | x) = -\frac{k}{2} \log(2\pi \sigma^2) - \frac{1}{2\sigma^2} |u + Kx|^2$$
and
$$\nabla_K \log \pi_K(u | x) =  - \frac{1}{\sigma^2} (u + Kx) x\tp.$$
Therefore, by the definition in \eqref{eq:Fisher_state}, the Fisher information matrix at state $x$ is
\begin{equation*}
\begin{aligned}
     F_x(K) &= \int_{\RR^k} (2\pi \sigma^2)^{-k/2} \exp\left( - \frac{1}{2\sigma^2} |u + Kx|^2 \right) \frac{1}{\sigma^4} [(u + Kx) x\tp] \otimes [(u + Kx) x\tp] \rd u\\
     & = \int_{\RR^k} (2\pi \sigma^2)^{-k/2} \exp\left( - \frac{1}{2\sigma^2} |u|^2 \right) \frac{1}{\sigma^4} [u x\tp] \otimes [u x\tp] \rd u.
\end{aligned}
\end{equation*}
Recall that the stationary state distribution is $N(0, D_K)$. Hence, the Fisher information matrix is
\begin{equation*}
\begin{aligned}
     & \quad F(K) = \int_{\RR^d} (2\pi)^{-d/2} (\det(D_K))^{-1/2} \exp\left( -\frac12 x\tp D_K^{-1} x \right) F_x(K) \rd x \\
     & = \int_{\RR^d} (2\pi)^{-d/2} (\det(D_K))^{-1/2} \exp\left( -\frac12 x\tp D_K^{-1} x \right) \\ 
     & \quad \int_{\RR^k} (2\pi \sigma^2)^{-k/2} \exp\left( - \frac{1}{2\sigma^2} |u|^2 \right) \frac{1}{\sigma^4} [u x\tp] \otimes [u x\tp] \rd u \rd x
\end{aligned}
\end{equation*}
Note that we can compute the integration w.r.t. $x$ and $u$ separately with
$$\int_{\RR^d} (2\pi)^{-d/2} (\det(D_K))^{-1/2} \exp\left( -\frac12 x\tp D_K^{-1} x \right) xx\tp \rd x = D_K$$
and
$$\int_{\RR^k} (2\pi \sigma^2)^{-k/2} \exp\left( - \frac{1}{2\sigma^2} |u|^2 \right) uu\tp \rd u = \sigma^2 I_k.$$
Therefore, by an elementwise analysis, we obtain
\begin{equation*}
    \sigma^2 F(K) \cdot G_K = G_K D_K.
\end{equation*}
Therefore, \eqref{eq:rescaled_gradient} holds.
\end{proof}

\subsection{Proofs for results in section 4}
We first prove the lemmas and then the main theorem \ref{thm:main1}.

\begin{proof}[Proof of Lemma \ref{lem:bound}]
Firstly
$$D_{K_t} = D_{\ep} + (A-BK_t) D_{K_t} (A-BK_t)\tp \ge D_{\ep} \ge \sigma_{min}(D_{\ep}).$$
$D_{K_t}$ also has an expression in series:
$$D_{K_t} = \sum_{s=0}^{\infty} (A-BK_t)^s D_{\ep} ((A-BK_t)\tp)^s.$$
Since $\lim_{k \to \infty} \| (A- BK_t)^k \|^{\frac1k} = \rho(A-BK_t) \le \rho < 1$ and $\|A-BK_t\| \le c_A$, (with an argument similar to the proof in Lemma \ref{lem:critic_convex} below,) we have
$$D_{K_t} = \sum_{s=0}^{\infty} (A-BK_t)^s D_{\ep} ((A-BK_t)\tp)^s \lesssim \dfrac{1}{1- \rho^2} \|D_{\ep}\|$$
with the constant depending on $c_A$ and $d$. Therefore, the first inequality in \eqref{eq:bounds} holds. The constant $c_D$ is proportional to $\dfrac{1}{1- \rho^2} \|D_{\ep}\|$ and also depends on $c_A$ and $d$.
The argument above also holds for $K^*$, so the inequality also holds with $K_t$ replaced by $K^*$. For $P_{K_t}$, we also have an expression in series:
\begin{equation*}
    P_{K_t} = \sum_{s=0}^{\infty} ((A-BK_t)\tp)^s (Q + B\tp R B) (A-BK_t)^s.
\end{equation*}
So the argument to prove the second inequality of \eqref{eq:bounds} is the same. Finally, since $\Sigma_{K_t}$ has expression \eqref{eq:stationary_z} with $\|D_{K_t}\| \le c_D$ and $\|K_t\| \le c_K$, $\|\Sigma_{K_t}\|$ has a bound $c_{\Sigma} = (1+c_K)^2 c_D + \sigma^2$ automatically.
\end{proof}

\subsubsection{Proofs for critic}
Here we prove the results for the critic.

\begin{proof}[Proof of Lemma \ref{lem:critic_convex}]
In order to show $\nabla^2 L_K(\theta) =  \EE_K \left[ \psi(x, u) \otimes \psi(x, u) \right] \ge \mu_{\sigma}$, we only need to show that for any $M \in \RR^{(d+k) \times (d+k)}$, we have
$$\EE_K \left[ (\Tr[M \psi(x, u)])^2 \right] \ge \mu_{\sigma} \|M\|_F^2.$$
Since $\psi(x,u)$ is symmetric, we have $\Tr[M \psi(x, u)] = \Tr[M\tp \psi(x, u)] = \Tr[\frac12(M + M\tp) \psi(x, u)]$. We also have $2 \| \frac12(M + M\tp) \|_F^2 \ge \|M\|_F^2$. Therefore, we only need to show 
\begin{equation}\label{eq:goal_convex}
    \EE_K \left[ (\Tr[M \psi(x, u)])^2 \right] \ge 2\mu_{\sigma} \|M\|_F^2
\end{equation}
for all symmetric matrix $M$.
Recall that 
\begin{equation*}
    z_{s+1} = E z_s + \widetilde{\ep}_s.
\end{equation*}
Since
$$\psi(z) = \EE_K[(Ez + \widetilde{\ep})(Ez + \widetilde{\ep})\tp] - zz\tp = E zz\tp E\tp + \Sigma_{\ep} - zz\tp,$$
we have
$$\Tr[M \psi(x, u)] = \Tr[ME zz\tp E\tp + M\Sigma_{\ep} - Mzz\tp] = z\tp (E\tp M E - M) z + \Tr[M\Sigma_{\ep}].$$
Recall that $z \sim N(0 ,\Sigma_K)$ under the stationary distribution where $\Sigma_K$ is defined in \eqref{eq:stationary_z}. By definition, for any $x \in \RR^d$, $u \in \RR^k$, and $\gamma \neq 0$, we have
\begin{multline}
    \begin{bmatrix}  x\tp & u\tp \end{bmatrix} \Sigma_K \begin{bmatrix}  x \\ u \end{bmatrix} = (\gamma x - \frac{1}{\gamma} K\tp u)\tp D_K (\gamma x - \frac{1}{\gamma} K\tp u)\\
    + (1 - \gamma^2) x\tp D_K x + u\tp [\sigma^2 I_k - (\frac{1}{\gamma^2} - 1) K D_K K\tp] u.
\end{multline}
Therefore, we can smartly choose a $\gamma \in (0,1)$ s.t. $(1 - \gamma^2)D_K \ge \mu_{\Sigma}$ and $\sigma^2 I_k - (\frac{1}{\gamma^2} - 1) K D_K K\tp \ge \mu_{\Sigma}$ for some positive constant $\mu_{\Sigma} \in \RR$. Therefore, $\Sigma_K \ge \mu_{\Sigma}$.
Using the same method, we can also show that $\Sigma_{\ep} \ge \mu_{\Sigma}$.
This $\mu_{\Sigma}$ depends on $\sigma$, $\sigma_{min}(D_K)$ ($\sigma_{min}(D_{\ep})$ for $\Sigma_{\ep}$) and $\|K\|$. Since $\sigma_{min}(D_K) \ge \sigma_{min}(D_{\ep}) = \mO(1)$, $\mu_{\Sigma}$ is of order $\mO(1)$ as long as we have an upper bound for $\|K\|$. We can also find that $\mu_{\Sigma} = \mO(\sigma^2)$ when $\sigma$ is small.
Next, we start to compute \eqref{eq:goal_convex}.
\begin{equation}\label{eq:trace_square}
\begin{aligned}
     & \quad \EE_K \left[ (\Tr[M \psi(x, u)])^2 \right]\\
     & = \EE_K \left[ \left( z\tp (E\tp M E - M) z + \Tr[M\Sigma_{\ep}] \right) \left( z\tp (E\tp M E - M) z + \Tr[M\Sigma_{\ep}] \right) \right]\\
     & = \EE_K \left[ z\tp (E\tp M E - M) z z\tp (E\tp M E - M) z + 2 z\tp (E\tp M E - M) z \Tr[M\Sigma_{\ep}] + \Tr[M\Sigma_{\ep}]^2\right].
\end{aligned}
\end{equation}
We will compute each term respectively. We recall the stationary distribution is $z \sim N(0, \Sigma_K)$. If we define $w = \Sigma_K^{-\frac12} z$, then $w \sim N(0, I_{d+k})$. Denote $(m_{ij}) = \widetilde{M} = \Sigma_K^{\frac12} (E\tp M E - M) \Sigma_K^{\frac12}$, then $\widetilde{M}$ is symmetric and
\begin{equation}\label{eq:trace_term1}
\begin{aligned}
     & \quad \EE_K \left[ z\tp (E\tp M E - M) z z\tp (E\tp M E - M) z\right]\\
     & = \EE_{w \sim N(0, I_{d+k})} \left[ w\tp \Sigma_K^{\frac12} (E\tp M E - M) \Sigma_K^{\frac12} ww\tp \Sigma_K^{\frac12} (E\tp M E - M) \Sigma_K^{\frac12} w \right] \\
     & = \EE_{w \sim N(0, I_{d+k})} \left[ w\tp \widetilde{M} ww\tp \widetilde{M} w \right] \\
     & = \int_{\RR^{d+k}} (2\pi)^{-\frac{d+k}{2}} w\tp \widetilde{M} ww\tp \widetilde{M} w \exp\left( -\dfrac{|w|^2}{2} \right) \rd w\\
     & = 3 \sum_{i=1}^{d+k} m_{ii}^2 + \sum_{i \neq j} m_{ii} m_{jj} + 2\sum_{i \neq j} m_{ij}^2 = 2\Tr[\widetilde{M}^2] + \Tr[\widetilde{M}]^2 \\
     & = 2\Tr\left[ \Sigma_K (E\tp M E - M) \Sigma_K (E\tp M E - M) \right] + \Tr\left[ \Sigma_K (E\tp M E - M) \right]^2.
\end{aligned}
\end{equation}
Also,
\begin{equation}\label{eq:trace_term2}
    \EE_K \left[z\tp (E\tp M E - M) z \right] = \EE_K \left[ \Tr(z z\tp (E\tp M E - M)) \right] = \Tr[\Sigma_K (E\tp M E - M))].
\end{equation}
Recall that $\Sigma_K = \Sigma_{\ep} + E \Sigma_K E\tp$, so
\begin{equation}\label{eq:trace_term}
    \Tr[\Sigma_K (E\tp M E - M))] = -\Tr[M (\Sigma_K - E \Sigma_K E\tp)] = -\Tr[M \Sigma_{\ep}]
\end{equation}
Therefore, substituting \eqref{eq:trace_term1}, \eqref{eq:trace_term2} and \eqref{eq:trace_term} into \eqref{eq:trace_square}, we obtain
\begin{equation}\label{eq:trace_hessian}
\begin{aligned}
     & \quad \EE_K \left[ (\Tr[M \psi(x, u)])^2 \right] \\
     & = 2\Tr\left[ \Sigma_K (E\tp M E - M) \Sigma_K (E\tp M E - M) \right] + \Tr\left[ M \Sigma_{\ep}\right]^2 - 2\Tr\left[ M \Sigma_{\ep}\right]^2 + \Tr\left[ M \Sigma_{\ep}\right]^2 \\
     & = 2\Tr\left[ \Sigma_K (E\tp M E - M) \Sigma_K (E\tp M E - M) \right] \\
     & \ge 2\mu_{\Sigma} \Tr\left[ (E\tp M E - M) \Sigma_K (E\tp M E - M) \right] \\
     & \ge 2\mu_{\Sigma}^2 \, \| E\tp M E - M \|_F^2
\end{aligned}
\end{equation}
for all symmetric matrix $M$.
Next, we want to show $\|M\|_F \lesssim \|E\tp M E - M\|_F$. Since the Frobenius norm is equivalent to the operator norm (with the constant depending on the dimension), we only need to show $\|M\| \lesssim \|E\tp M E - M\|$. Note that this step makes $\mu_{\sigma}$ depend polynomially on $d+k$. We define an operator $\mT_E: \RR^{(d+k) \times (d+k)} \to \RR^{(d+k) \times (d+k)}$ such that
$$\mT_E(X) = \sum_{s=0}^{\infty} (E\tp)^s X E^s.$$
Since $1 > \rho \ge \rho(E) = \lim_{s \to \infty} \|E^s\|^{\frac1s}$, the norm of the operator should satisfy
$$\|\mT_E\| = sup_{X \neq 0} \dfrac{\|\mT_E(X)\|}{\|X\|} \le \dfrac{c}{1-\rho^2}$$
where $c$ depends on $\|E\|$ and $d+k$.
Notice that
$$\mT_E(M - E\tp M E) = \sum_{s=0}^{\infty} (E\tp)^s (M - E\tp M E) E^s = M,$$
we conclude that
$$\|M\| = \|\mT_E(M - E\tp M E)\| \le \|\mT_E\| \|M - E\tp M E\| \le \frac{c}{1-\rho^2} \|M - E\tp M E\|.$$
So, $\|M\|_F \lesssim \|E\tp M E - M\|_F$.
Therefore, by \eqref{eq:trace_hessian}, $\nabla^2 L_K(\theta) =  \EE_K \left[ \psi(x, u) \otimes \psi(x, u) \right] \ge \mu_{\sigma}$ holds with $\mu_{\sigma}$ proportional to $\sigma^4 / (1 - \rho^2)$ and depending on $\|E\|$ and $d+k$. Moreover, $\mu_{\sigma}$ grows polynomially as $d+k$ becomes large.
\end{proof}

\begin{proof}[Proof of Lemma \ref{lem:critic_gradient_accuracy}]
Similar to \eqref{eq:f}, we define
$$\nabla L_t(\theta_t) = \EE_{K_t} [f(x,u)],$$
where $f$ depends on both $\theta_t$ and $K_t$. We denote $\EE_{N_0} [f(x,u)]$ the expectation of the same function under the distribution of $(x_{N_0}, u_{N_0})$, which starts at $x_0 = 0$ and follows the policy $\pi_{K_t}$.
We prove \eqref{eq:critic_grad_bound} first. We recall that the feature matrix $\phi(x,u)$ defined in \eqref{eq:feature} is quadratic in $(x,u)$. So, $\psi(x, u) = \EE\left[ \phi(x', u') | x, u \right] - \phi(x,u)$ also grows at most quadratically in $(x,u)$ since $(x', u')$ are normally distributed. Therefore, $f(x,u)$, defined in \eqref{eq:f} grows at most quartically in $(x,u)$. By assumption \ref{assump}, $\|\theta_t\|_F \le c_{\theta} = \mO(1)$ and $\|K_t\| \le c_K = \mO(1)$, so the coefficients for this quadratic growth are of order $\mO(1)$.
A similar argument tells us that $\widehat{f}(x^{(i)}_{N_0}, u^{(i)}_{N_0})$ defined in \eqref{eq:f_hat} grows at most quartically in $\{(x^{(i,j)}, u^{(i,j)})\}_{j=1}^{N_1}$ and $(x^{(i)}_{N_0}, u^{(i)}_{N_0})$, with $\mO(1)$ coefficients. Note that $\{(x^{(i,j)}, u^{(i,j)})\}_{j=1}^{N_1}$ and $(x^{(i)}_{N_0}, u^{(i)}_{N_0})$ are normally distributed with $0$ mean and $\mO(1)$ covariance matrix. Therefore,
\begin{equation*}
\begin{aligned}
    & \quad \EE \left[ \left\| \widehat{\nabla L_t}(\theta_t) \right\|^2_F ~\Big|~ \mG_t \right] = \EE \left[ \left\| \frac1N \sum_{i=1}^N \widehat{f}(x^{(i)}_{N_0}, u^{(i)}_{N_0}) \right\|^2_F ~\Big|~ \mG_t \right] = \mO(1)
\end{aligned}
\end{equation*}
So \eqref{eq:critic_grad_bound} holds with $c_L = \mO(1)$. We also see that $c_L = \text{poly}(d+k)$ as the dimensions increase. We will show \eqref{eq:error_grad} next.
By definition,
\begin{equation}\label{eq:bias}
    \left\| \EE \left[ \widehat{\nabla L_t}(\theta_t) - \nabla L_t(\theta_t) ~\Big|~ \mG_t \right] \right\|_F =  \| \EE_{N_0} [f(x,u)] - \EE_{K_t} [f(x,u)] \|_F.
\end{equation}
Here, we remind the reader that the expectation on the left in \eqref{eq:bias} is taken w.r.t. the training filtration $\mG_t$ while those on the right are taken w.r.t. the state and action distributions.

We remark that existing results \citep{arnold1968ergodic} bound \eqref{eq:bias} directly. However, it can be computed directly, so we give an elementary proof. Recall that the state trajectory is given by 
\begin{equation*}
    x_{s+1} =  (A -BK_t) x_s + \ep_s
\end{equation*}
with $x_0 = 0$ where $\ep_s \sim N(0, D_{\ep})$. Therefore, the distribution of $x_{N_0}$ is
$$x_{N_0} \sim N\left(0, \sum_{s=0}^{N_0-1} (A -BK_t)^s D_{\ep} ((A -BK_t)\tp)^s \right) =: N\left(0, D^{(N_0)}_{K_t}\right)$$
and the stationary distribution of $x_s$ is
$$x_{\infty} \sim N\left(0, \sum_{s=0}^{\infty} (A -BK_t)^s D_{\ep} ((A -BK_t)\tp)^s \right) = N\left(0, D_{K_t}\right).$$
Since $\rho(A-BK_t) \le \rho < 1$, $D_{\ep} > 0$, and $N_0 = \mO(\log \frac{1}{\delta})$, we have $D_{K_t} > \sigma_{min}(D_{\ep})$, $D^{(N_0)}_{K_t} > \sigma_{min}(D_{\ep})$, $D_{K_t} - D^{(N_0)}_{K_t} \ge 0$ and $\| D_{K_t} - D^{(N_0)}_{K_t} \|_F \lesssim \delta$.
Since $u_s \sim N(-K_t x_s, \sigma^2 I_k)$, we have the joint distribution for $z_{N_0} = (x_{N_0}\tp, u_{N_0}\tp)\tp$
$$z_{N_0} \sim N\left( 0, \begin{bmatrix}  D^{(N_0)}_{K_t} & - D^{(N_0)}_{K_t} K_t\tp \\ - K_t D^{(N_0)}_{K_t} & K_t  D^{(N_0)}_{K_t} K_t\tp + \sigma^2 I_k \end{bmatrix} \right) =: N\left( 0, \Sigma^{(N_0)}_{K_t} \right)$$
and the joint stationary distribution
$$z \sim N\left( 0, \begin{bmatrix}  D_{K_t} & - D_{K_t} K_t\tp \\ - K_t D_{K_t} & K_t  D_{K_t} K_t\tp + \sigma^2 I_k \end{bmatrix} \right) =: N\left( 0, \Sigma_{K_t} \right)$$
Since $\| D_{K_t} - D^{(N_0)}_{K_t} \|_F \lesssim \delta$ and $\|K_t\| \le c_K$, we have $\| \Sigma_{K_t} - \Sigma^{(N_0)}_{K_t} \|_F \le c_6 \delta$. 
Here the positive constant $c_6 = \mO(1)$ decrease geometrically as $N_0$ increases algebraically.
Furthermore, using the same argument when we prove $\Sigma_K \ge \mu_{\Sigma}$ in Lemma \ref{lem:critic_convex}, we can find a positive constant $\mu_{\Sigma} = \mO(1)$ such that $\Sigma_{K_t} \ge \mu_{\Sigma}$ and $\Sigma^{(N_0)}_{K_t} \ge \mu_{\Sigma}$. Therefore
\begin{equation}\label{eq:diff_exp}
\begin{aligned}
     & \quad \left\| \EE_{N_0} [f(x,u)] - \EE_{K_t} [f(x,u)] \right\|_F \\
     & = \left\| \int_{\RR^{d+k}} f(z) (2\pi)^{-\frac{d+k}{2}} \left[ \det(\Sigma^{(N_0)}_{K_t})^{-\frac12} \exp\left( -\frac12 z\tp (\Sigma^{(N_0)}_{K_t})^{-1} z \right) \right.\right.\\
     & \left.\left. \hspace{1in} - \det(\Sigma_{K_t})^{-\frac12} \exp\left( -\frac12 z\tp (\Sigma_{K_t})^{-1} z \right) \right] \rd z \right\|_F \\
     & \le \int_{\RR^{d+k}} c(1 + |z|^4) \Bigg| \det(\Sigma^{(N_0)}_{K_t})^{-\frac12} \exp\left( -\frac12 z\tp (\Sigma^{(N_0)}_{K_t})^{-1} z \right) \\
     &  \hspace{1in} - \det( \Sigma_{K_t})^{-\frac12} \exp\left( -\frac12 z\tp (\Sigma_{K_t})^{-1} z \right) \Bigg| \rd z \\
     & \le \int_{\RR^{d+k}} c(1 + |z|^4) \left[ \det(\Sigma^{(N_0)}_{K_t})^{-\frac12} - \det(\Sigma_{K_t})^{-\frac12} \right] \exp\left( -\frac12 z\tp (\Sigma^{(N_0)}_{K_t})^{-1} z \right) \rd z \\
     & \quad + \int_{\RR^{d+k}} c(1 + |z|^4) \det(\Sigma_{K_t})^{-\frac12} \left[ \exp\left( -\frac12 z\tp (\Sigma_{K_t})^{-1} z \right) - \exp\left( -\frac12 z\tp (\Sigma^{(N_0)}_{K_t})^{-1} z \right) \right] \rd z 
\end{aligned}
\end{equation}
There is no absolute value at the end of \eqref{eq:diff_exp} because each term is non-negative. Next, we will bound the two integrals respectively. For the first one, we have
\begin{equation*}
\begin{aligned}
     & \quad \det(\Sigma^{(N_0)}_{K_t})^{-\frac12} - \det(\Sigma_{K_t})^{-\frac12}\\
     & = \dfrac{\det(\Sigma_{K_t}) - \det(\Sigma^{(N_0)}_{K_t})}{ \sqrt{\det(\Sigma^{(N_0)}_{K_t})\det(\Sigma_{K_t})} \left(\sqrt{\det(\Sigma_{K_t})} + \sqrt{\det(\Sigma^{(N_0)}_{K_t})} \right) }\\
     & = \mO(1) \left( \det(\Sigma_{K_t}) - \det(\Sigma^{(N_0)}_{K_t}) \right).
\end{aligned}
\end{equation*}
Next, we will show $\det(\Sigma_{K_t}) - \det(\Sigma^{(N_0)}_{K_t}) = \mO(\delta)$. We can find a unitary matrix $U$ such that $U\tp \Sigma^{(N_0)}_{K_t} U$ is a diagonal matrix, 
$$\left\| U\tp \Sigma_{K_t} U - U\tp \Sigma^{(N_0)}_{K_t} U \right\|_F = \left\| \Sigma_{K_t} - \Sigma^{(N_0)}_{K_t} \right\|_F \le c_6 \delta,$$
and
$$\det(\Sigma_{K_t}) - \det(\Sigma^{(N_0)}_{K_t}) = \det(U\Sigma_{K_t}U\tp) - \det(U\Sigma^{(N_0)}_{K_t}U\tp). $$
If we assume that the diagonal element of $U\Sigma_{K_t}U\tp$ to be $a_1, \cdots, a_{d+k}$ and
$$U\Sigma^{(N_0)}_{K_t}U\tp = \text{diag} (b_1, \cdots, b_{d+k}).$$
Then $a_i \ge b_i$ and $a_i - b_i = \mO(\delta)$. Therefore
$$0 \le \det(U\Sigma_{K_t}U\tp) - \det(U\Sigma^{(N_0)}_{K_t}U\tp) \le \prod_{i=1}^{d+k} a_i - \prod_{i=1}^{d+k} b_i = \mO(\delta).$$
Therefore, $\det(\Sigma_{K_t}) - \det(\Sigma^{(N_0)}_{K_t}) = \mO(\delta)$ and hence
$$\det(\Sigma^{(N_0)}_{K_t})^{-\frac12} - \det(\Sigma_{K_t})^{-\frac12} \le c \delta$$
with positive constant $c$ being as small as we want (through increasing $N_0$). 
Therefore, the first integral in \eqref{eq:diff_exp} satisfies
\begin{equation}\label{eq:diff_exp1}
\begin{aligned}
     & \quad \int_{\RR^{d+k}} c(1 + |z|^4) \left[ \det(\Sigma^{(N_0)}_{K_t})^{-\frac12} - \det(\Sigma_{K_t})^{-\frac12} \right] \exp\left( -\frac12 z\tp (\Sigma^{(N_0)}_{K_t})^{-1} z \right) \rd z \\
    & \le c \delta \int_{\RR^{d+k}} (1 + |z|^4) \exp\left( -\frac12 z\tp (\Sigma^{(N_0)}_{K_t})^{-1} z \right) \rd z = c \delta \mO(1) \le \frac{1}{2} \delta.
\end{aligned}
\end{equation}
Here, again, the constant $c$ may differ according to the context. 
A more detailed computation shows that
$$\det(\Sigma^{(N_0)}_{K_t})^{-\frac12} - \det(\Sigma_{K_t})^{-\frac12} \le \det(\Sigma^{(N_0)}_{K_t})^{-\frac12} ~\text{poly}(d+k)~c_6 \delta.$$
Therefore, $N_0$ should scale with $\log(d+k)$ as the dimensions increase.
Next, we bound the second integration in \eqref{eq:diff_exp}. Using the inequality $1 - e^{-x} \le x$, we have
\begin{equation*}
\begin{aligned}
     & \quad \exp\left( -\frac12 z\tp (\Sigma_{K_t})^{-1} z \right) - \exp\left( -\frac12 z\tp (\Sigma^{(N_0)}_{K_t})^{-1} z \right) \\
     & = \exp\left( -\frac12 z\tp (\Sigma_{K_t})^{-1} z \right) \left[ 1 - \exp\left( -\frac12 z\tp \left((\Sigma^{(N_0)}_{K_t})^{-1} - (\Sigma_{K_t})^{-1}\right) z \right) \right] \\
     & \le \frac12 z\tp \left((\Sigma^{(N_0)}_{K_t})^{-1} - (\Sigma_{K_t})^{-1}\right) z \exp\left( -\frac12 z\tp (\Sigma_{K_t})^{-1} z \right) \\
     & =  \frac12 \exp\left( -\frac12 z\tp (\Sigma_{K_t})^{-1} z \right) \Tr\left[ \left((\Sigma^{(N_0)}_{K_t})^{-1} - (\Sigma_{K_t})^{-1}\right) z z\tp \right] \\
     & = \frac12 \exp\left( -\frac12 z\tp (\Sigma_{K_t})^{-1} z \right) \Tr\left[ (\Sigma^{(N_0)}_{K_t})^{-1} \left(\Sigma_{K_t} - \Sigma^{(N_0)}_{K_t} \right) (\Sigma_{K_t})^{-1}  z z\tp \right] \\
     & \le \frac12 \exp\left( -\frac12 z\tp (\Sigma_{K_t})^{-1} z \right) \|(\Sigma^{(N_0)}_{K_t})^{-1} \left(\Sigma_{K_t} - \Sigma^{(N_0)}_{K_t} \right) (\Sigma_{K_t})^{-1}\| \, \Tr[z z\tp]\\
     & \le \frac12 \exp\left( -\frac12 z\tp (\Sigma_{K_t})^{-1} z \right) \dfrac{1}{\mu_{\Sigma}^2} c_6 \delta |z|^2.
\end{aligned}
\end{equation*}
Therefore, the second integration in \eqref{eq:diff_exp} satisfies
\begin{equation}\label{eq:diff_exp2}
\begin{aligned}
     & \quad \int_{\RR^{d+k}} c(1 + |z|^4) \det(\Sigma_{K_t})^{-\frac12} \left[ \exp\left( -\frac12 z\tp (\Sigma_{K_t})^{-1} z \right) - \exp\left( -\frac12 z\tp (\Sigma^{(N_0)}_{K_t})^{-1} z \right) \right] \rd z \\
     & \le \delta \int_{\RR^{d+k}} c(|z|^2 + |z|^6) \det(\Sigma_{K_t})^{-\frac12} \exp\left( -\frac12 z\tp (\Sigma_{K_t})^{-1} z \right) \rd z = \delta c \mO(1) \le \frac12 \delta.
\end{aligned}
\end{equation}
Plugging \eqref{eq:diff_exp1} and  \eqref{eq:diff_exp2} into \eqref{eq:diff_exp}, we obtain
\begin{equation*}
    \left\| \EE_{N_0} [f(x,u)] - \EE_{K_t} [f(x,u)] \right\|_F \le \delta.
\end{equation*}
\end{proof}

\begin{proof}[Proof of Lemma \ref{lem:thetaK}]
By definition
\begin{equation*}
    \theta_K - \theta_{K'} = \begin{bmatrix}  A\tp (P_K - P_{K'}) A & A\tp (P_K - P_{K'}) B \\ B\tp (P_K - P_{K'}) A & B\tp (P_K - P_{K'}) B \end{bmatrix} = \begin{bmatrix} A\tp \\ B\tp \end{bmatrix} \begin{bmatrix} P_K - P_{K'} \end{bmatrix} \begin{bmatrix} A & B \end{bmatrix}
\end{equation*}
Therefore,
\begin{equation}\label{eq:thetaP}
\begin{aligned}
     & \quad \| \theta_K - \theta_{K'} \|_F^2 = \Tr[(\theta_K - \theta_{K'})\tp (\theta_K - \theta_{K'})] \\
     & = \Tr\left( [(AA\tp + BB\tp) (P_K - P_{K'})]^2   \right) \le (\|A\|^2 + \|B\|^2)^2 \|P_K - P_{K'}\|_F^2
\end{aligned}
\end{equation}
Therefore, our goal is to bound $\|P_K - P_{K'}\|_F$ by $\|K - K'\|_F$. By definition in \eqref{eq:PK},
\begin{equation*}
\begin{aligned}
     & \quad P_K - P_{K'} \\
     & = K\tp R K - K\ptp R K' + (A-BK)\tp P_K (A-BK) - (A-BK')\tp P_{K'} (A-BK') \\
     & = K\tp R K - K\tp R K' + K\tp R K' - K\ptp R K' \\
     & \quad + (A-BK)\tp P_K (A-BK) - (A-BK)\tp P_K (A-BK')\\
     & \quad +  (A-BK)\tp P_K (A-BK') - (A-BK)\tp P_{K'} (A-BK')\\
     & \quad + (A-BK)\tp P_{K'} (A-BK') - (A-BK')\tp P_{K'} (A-BK')\\
     & = K\tp R (K - K') + (K - K')\tp R K' - (A-BK)\tp P_K B (K - K') \\
     & \quad + (A-BK)\tp (P_K - P_{K'}) (A-BK') - (K - K')\tp B\tp P_{K'} (A-BK')
\end{aligned}
\end{equation*}
Therefore, 
\begin{equation}\label{eq:PK_diff}
\begin{aligned}
     & \quad P_K - P_{K'} - (A-BK)\tp (P_K - P_{K'}) (A-BK') \\
     & = K\tp R (K - K') + (K - K')\tp R K' \\
     & \quad - (A-BK)\tp P_K B (K - K') - (K - K')\tp B\tp P_{K'} (A-BK')
\end{aligned}
\end{equation}
Next, we want to take $\|\cdot\|_F$ on both sides of \eqref{eq:PK_diff}. For the left hand side, since $\rho(A-BK), \rho(A-BK') \le \rho < 1$ and $\|A-BK\|, \|A-BK'\| \le c_A$, we can repeat the last part in the proof of Lemma \ref{lem:critic_convex} and prove that
\begin{equation}\label{eq:PK_diff_LHS}
\|P_K - P_{K'}\|_F \le c \|(P_K - P_{K'}) - (A-BK)\tp (P_K - P_{K'}) (A-BK') \|_F
\end{equation}
where $c$ is proportional to $1 / (1 - \rho^2)$ and also depends on $c_A$ and $d$.
For the right hand side of \eqref{eq:PK_diff}, since $\|P_K\| \le c_P$, $\|P_{K'}\| \le c_P$, $\|K\| \le c_K$ and $\|K'\| \le c_K$,
\begin{equation}\label{eq:PK_diff_RHS}
\begin{aligned}
     & \quad \|K\tp R (K - K') + (K - K')\tp R K' \\
     & \quad - (A-BK)\tp P_K B (K - K') - (K - K')\tp B\tp P_{K'} (A-BK')\|_F \\
     & \le 2 (c_K \|R\| + c_P c_A \|B\|) \, \| K - K' \|_F.
\end{aligned}
\end{equation}
Plugging \eqref{eq:PK_diff_LHS} and \eqref{eq:PK_diff_RHS} into \eqref{eq:PK_diff}, we obtain
\begin{equation}\label{eq:PK_diff2}
    \|P_K - P_{K'}\|_F \le 2c (c_K \|R\| + c_P c_A \|B\|) \, \| K - K' \|_F.
\end{equation}
Finally, combining \eqref{eq:thetaP} and \eqref{eq:PK_diff2}, we obtain
\begin{equation}
    \| \theta_K - \theta_{K'} \|_F \le c_1 \|K-K'\|_F
\end{equation}
with $c_1 = 2c (c_K \|R\| + c_P c_A \|B\|) \, (\|A\|^2 + \|B\|^2)$. This $c_1$ grows polynomially as the dimensions increase.
\end{proof}

\begin{proof}[Proof of Lemma \ref{lem:critic_improvement}]
Note that
\begin{equation}\label{eq:long}
\begin{aligned}
    & \quad \| \theta_{t+1} - \theta_{K_{t+1}} \|^2_F = \| \theta_t - \alpha_t \widehat{\nabla L_t}(\theta_t) - \theta_{K_t} + \theta_{K_t} - \theta_{K_{t+1}} \|^2_F \\
    & = \| \theta_t - \theta_{K_t} \|^2_F - 2 \alpha_t \Tr\left[ (\theta_t - \theta_{K_t})\tp \widehat{\nabla L_t}(\theta_t) \right] \\
    & \quad + \alpha_t^2 \| \widehat{\nabla L_t}(\theta_t) \|^2_F + \| \theta_{K_t} - \theta_{K_{t+1}} \|^2_F + 2\Tr\left[ (\theta_{K_t} - \theta_{K_{t+1}})\tp (\theta_t - \theta_{K_t} - \alpha_t \widehat{\nabla L_t}(\theta_t)) \right] \\
    & = \| \theta_t - \theta_{K_t} \|^2_F - 2 \alpha_t \Tr\left[ (\theta_t - \theta_{K_t})\tp \nabla L_t(\theta_t) \right] + 2 \alpha_t \Tr\left[ (\theta_t - \theta_{K_t})\tp (\nabla L_t(\theta_t) - \widehat{\nabla L_t}(\theta_t))) \right] \\
    &\quad + \alpha_t^2 \| \widehat{\nabla L_t}(\theta_t) \|^2_F + \| \theta_{K_t} - \theta_{K_{t+1}} \|^2_F + 2\Tr\left[ (\theta_{K_t} - \theta_{K_{t+1}})\tp (\theta_t - \theta_{K_t} - \alpha_t \widehat{\nabla L_t}(\theta_t)) \right] \\
    & \le (1 - 2\alpha_t \mu_{\sigma}) \| \theta_t - \theta_{K_t} \|^2_F + 2 \alpha_t \Tr\left[ (\theta_t - \theta_{K_t})\tp (\nabla L_t(\theta_t) - \widehat{\nabla L_t}(\theta_t))) \right] + \alpha_t^2 \| \widehat{\nabla L_t}(\theta_t) \|^2_F \\
    & \quad + \| \theta_{K_t} - \theta_{K_{t+1}} \|^2_F + 2\Tr\left[ (\theta_{K_t} - \theta_{K_{t+1}})\tp (\theta_t - \theta_{K_t}) \right] - 2\alpha_t  \Tr\left[ (\theta_{K_t} - \theta_{K_{t+1}})\tp \widehat{\nabla L_t}(\theta_t) \right]\\
    & \le (1 - \frac53 \alpha_t \mu_{\sigma}) \| \theta_t - \theta_{K_t} \|^2_F + 2 \alpha_t \Tr\left[ (\theta_t - \theta_{K_t})\tp (\nabla L_t(\theta_t) - \widehat{\nabla L_t}(\theta_t))) \right] + 2\alpha_t^2 \| \widehat{\nabla L_t}(\theta_t) \|^2_F \\
    & \quad + (\dfrac{3}{\alpha_t \mu_{\sigma}} + 2) \| \theta_{K_t} - \theta_{K_{t+1}} \|^2_F 
\end{aligned}
\end{equation}
The first inequality is because $L_t(\theta)$ is $\mu_{\sigma}-$ strongly convex and hence
$$\Tr\left[ (\theta_t - \theta_{K_t})\tp \nabla L_t(\theta_t) \right] = \Tr\left[ (\theta_t - \theta_{K_t})\tp (\nabla L_t(\theta_t) - \nabla L_t(\theta_{K_t})) \right] \ge \mu_{\sigma} \| \theta_t - \theta_{K_t} \|_F^2.$$
The second inequality in \eqref{eq:long} is a simple application of Cauchy-Schwartz inequality. Taking expectation w.r.t. $\mG_t$ in \eqref{eq:long}, we obtain
\begin{equation*}
\begin{aligned}
     & \quad \EE \left[ \| \theta_{t+1} - \theta_{K_{t+1}} \|^2_F ~\big|~ \mG_t \right]\\
     & \le (1 - \frac53 \alpha_t \mu_{\sigma}) \| \theta_t - \theta_{K_t} \|^2_F + 2 \alpha_t \Tr\left[ (\theta_t - \theta_{K_t})\tp \EE\left[\nabla L_t(\theta_t) - \widehat{\nabla L_t}(\theta_t) ~\Big|~ \mG_t \right] \right] \\
    & \quad + 2\alpha_t^2 \EE\left[\left\| \widehat{\nabla L_t}(\theta_t) \right\|^2_F ~\Big|~ \mG_t \right] + (\dfrac{3}{\alpha_t \mu_{\sigma}} + 2) \| \theta_{K_t} - \theta_{K_{t+1}} \|^2_F\\
    & \le (1 - \frac43 \alpha_t \mu_{\sigma}) \| \theta_t - \theta_{K_t} \|^2_F + \dfrac{3 \alpha_t}{\mu_{\sigma}} \left\| \EE\left[\nabla L_t(\theta_t) - \widehat{\nabla L_t}(\theta_t) ~\Big|~ \mG_t \right] \right\|_F^2  \\
    & \quad + 2\alpha_t^2 \EE\left[\left\| \widehat{\nabla L_t}(\theta_t) \right\|^2_F ~\Big|~ \mG_t \right] + (\dfrac{3}{\alpha_t \mu_{\sigma}} + 2) \| \theta_{K_t} - \theta_{K_{t+1}} \|^2_F.
\end{aligned}
\end{equation*}
Therefore,
\begin{equation*}
\begin{aligned}
    & \quad \EE \left[ \| \theta_{t+1} - \theta_{K_{t+1}} \|^2_F ~\big|~ \mG_t \right] - \| \theta_t - \theta_{K_t} \|_F^2 \\
    & \le  - \frac43 \alpha_t \mu_{\sigma} \| \theta_t - \theta_{K_t} \|^2_F + \dfrac{3 \alpha_t}{\mu_{\sigma}} \left\| \EE\left[\nabla L_t(\theta_t) - \widehat{\nabla L_t}(\theta_t) ~\Big|~ \mG_t \right] \right\|_F^2\\
    & \quad + 2\alpha_t^2 \EE\left[\left\| \widehat{\nabla L_t}(\theta_t) \right\|^2_F ~\Big|~ \mG_t \right] + (\dfrac{3}{\alpha_t \mu_{\sigma}} + 2) \| \theta_{K_t} - \theta_{K_{t+1}} \|^2_F.
\end{aligned}
\end{equation*}
Combining with \eqref{eq:error_grad}, \eqref{eq:critic_grad_bound}, and the definition of $\alpha_t$, we obtain \eqref{eq:critic_imporvement0}:
\begin{equation*}
\begin{aligned}
     & \quad \EE \left[ \| \theta_{t+1} - \theta_{K_{t+1}} \|^2_F ~\big|~ \mG_t \right] - \| \theta_t - \theta_{K_t} \|_F^2 \\
     & \le - \frac43 \alpha_t \mu_{\sigma} \| \theta_t - \theta_{K_t} \|_F^2 + \frac14 \dfrac{\sigma_{min}(D_{\ep})}{c_3} \beta_t \ve + \bigl(\dfrac{3}{\alpha_t \mu_{\sigma}} + 2\bigr) \| \theta_{K_t} - \theta_{K_{t+1}} \|_F^2.
\end{aligned}
\end{equation*}
\end{proof}

\subsubsection{Proofs for the Actor}
Next, we prove the results for the actor.

\begin{proof}[Proof of Lemma \ref{lem:gradient_dominant}]
We prove the upper bound first. According to \eqref{eq:JK},
\begin{equation}\label{eq:JK_dif}
    J(K) - J(K^*) = \Tr((P_K - P_{K^*}) D_{\ep}) = \EE_{x \sim N(0, D_{\ep})} [x\tp (P_K - P_{K^*}) x]
\end{equation}
where we recall that $P_K = (Q + K\tp R K) + (A-BK)\tp P_K (A-BK)$ and $P_{K^*}$ satisfies a similar equation. So, $P_{K^*}$ also has the following expression in series
\begin{equation*}
    P_{K^*} = \sum_{s=0}^{\infty} [(A-BK^*)^s]\tp (Q + K^{* \top} R K^*) (A-BK^*)^s.
\end{equation*}
Therefore, if we define a sequence $\{y_s\}_{s=0}^{\infty}$ with $y_0 = x$ and $y_{s+1} = (A-BK^*) y_s$, then
\begin{equation*}
    x\tp P_{K^*} x = \sum_{s=0}^{\infty} x\tp [(A-BK^*)^s]\tp (Q + K^{* \top} R ^*K) (A-BK^*)^s x = \sum_{s=0}^{\infty} y_s\tp (Q + K^{* \top} R K^*) y_s.
\end{equation*}
Combining with 
$$x\tp P_{K} x = \sum_{s=0}^{\infty} \left( y_s\tp P_K y_s - y_{s+1}\tp P_K y_{s+1} \right) = \sum_{s=0}^{\infty} y_s\tp (P_K - (A-BK^*)\tp P_K (A-BK^*)) y_s$$
and \eqref{eq:JK_dif}, we obtain
\begin{equation}\label{eq:JK_dif2}
\begin{aligned}
    & \quad J(K) - J(K^*) \\
    &= \EE_{D_{\ep}, K^*} \left[ \sum_{s=0}^{\infty} y_s\tp \left(-Q - K^{* \top} R K^* + P_K - (A-BK^*)\tp P_K (A-BK^*)\right) y_s \right]\\
    & = \Tr\left[ \EE_{D_{\ep}, K^*} \left[ \sum_{s=0}^{\infty} y_s  y_s\tp \right] \cdot \left(-Q - K^{* \top} R K^* + P_K - (A-BK^*)\tp P_K (A-BK^*)\right) \right]
\end{aligned}
\end{equation}
where $\EE_{D_{\ep}, K^*}$ denotes the expectation with $y_0 \sim N(0, D_{\ep})$ and $y_{s+1} = (A-BK^*) y_s$. Next, we analyze the two terms in \eqref{eq:JK_dif2} respectively. The first term is easy, recall that $D_{K^*}$ is the solution of 
$$D_{K^*} = D_{\ep} + (A-BK^*) D_{K^*} (A-BK^*)\tp$$
so that
$$D_{K^*} = \sum_{s=0}^{\infty} (A-BK^*)^s D_{\ep} [(A-BK^*)\tp]^s.$$ 
Therefore,
\begin{equation}\label{eq:DK_star}
    \EE_{D_{\ep}, K^*} \left[ \sum_{s=0}^{\infty} y_s  y_s\tp \right] = \EE_{x \sim N(0, D_{\ep})} \left[ \sum_{s=0}^{\infty} (A-BK^*)^s x x\tp [(A-BK^*)\tp]^s \right] = D_{K^*}.
\end{equation}
Next, we consider the second term in \eqref{eq:JK_dif2}. By direct computation,
\begin{equation}\label{eq:JK_dif3}
\begin{aligned}
    & \quad -Q - K^{* \top} R K^* + P_K - (A-BK^*)\tp P_K (A-BK^*)\\
    & = -Q - (K^{*}-K+K)\tp R (K^{*}-K+K) + P_K \\
    & \quad- (A -BK+BK-BK^*)\tp P_K (A-BK+BK-BK^*)\\
    & = (K-K^*)\tp (RK - B\tp P_K(A-BK)) + (RK - B\tp P_K(A-BK))\tp (K-K^*)\\
    & \quad - (K-K^*)\tp (R + B\tp P_K B) (K-K^*)\\
    & = (K-K^*)\tp G_K + G_K\tp (K-K^*) - (K-K^*)\tp (R + B\tp P_K B) (K-K^*)\\
    & = G_K\tp (R + B\tp P_K B)^{-1} G_K \\
    & \quad - (K-K^*-(R + B\tp P_K B)^{-1}G_K)\tp (R + B\tp P_K B) (K-K^*-(R + B\tp P_K B)^{-1}G_K)\\
    & \le G_K\tp (R + B\tp P_K B)^{-1} G_K
\end{aligned}
\end{equation}
where we have used the equation \eqref{eq:PK} for $P_K$ in the second equality and the definition of $G_K$ \eqref{eq:GK} in the third equality. The $\le$ above means the difference of the two matrix is positive semi-definite. Plugging \eqref{eq:DK_star} and \eqref{eq:JK_dif3} into \eqref{eq:JK_dif2}, we obtain
\begin{equation*}
    J(K) - J(K^*) \le \Tr(D_{K^*} \, G_K\tp (R + B\tp P_K B)^{-1} G_K) \le \|D_{K^*}\| / \sigma_{min}(R) \Tr(G_K G_K\tp).
\end{equation*}
This finishes the proof of the upper bound. Next, we prove the lower bound. Note that the argument above does not rely on the optimality of $K^*$. Therefore, we can obtain a general formula (that is useful in the proof later):
\begin{multline}\label{eq:JK_diff_gen}
    J(K) - J(K') \\
    = \Tr\left[ D_{K'} \left( (K - K')\tp G_K + G_{K}\tp (K - K') - (K - K')\tp (R + B\tp P_{K} B) (K - K') \right) \right].
\end{multline}
Specifically, we can set $K' = K-(R + B\tp P_K B)^{-1}G_K$ (i.e., let \eqref{eq:JK_dif3} hold with equality), then by the optimality of $K^*$ and \eqref{eq:JK_diff_gen}, we obtain
\begin{equation*}
\begin{aligned}
    & \quad J(K) - J(K^*) \ge J(K) - J(K') = \Tr(D_{K'} \, G_K\tp (R + B\tp P_K B)^{-1} G_K)\\
    & \ge \sigma_{min}(D_{\ep}) \, \| R + B\tp P_K B \|^{-1} \Tr(G_K G_K\tp) \ge \dfrac{\sigma_{min}(D_{\ep})}{\|R\| + c_P \|B\|^2} \Tr(G_K G_K\tp)
\end{aligned}
\end{equation*}
\end{proof}

\begin{proof}[Proof of Lemma \ref{lem:actor_improvement}]
By \eqref{eq:JK_diff_gen},
\begin{equation*}
\begin{aligned}
    & \quad J(K_{t}) - J(K_{t+1}) \\
    & = \Tr\left[ D_{K_{t+1}} \left( (K_t - K_{t+1})\tp G_{K_t} + G_{K_t}\tp (K_t - K_{t+1}) \right. \right.\\
    & \quad \left. \left. - (K_t - K_{t+1})\tp (R + B\tp P_{K_t} B) (K_t - K_{t+1}) \right) \right]\\
    & = \Tr\left[ D_{K_{t+1}} \left( \beta_t \widehat{G}_{K_t}\tp G_{K_t} + \beta_t G_{K_t}\tp \widehat{G}_{K_t} - \beta_t^2 \widehat{G}_{K_t}\tp (R + B\tp P_{K_t} B) \widehat{G}_{K_t} \right) \right]
\end{aligned}
\end{equation*}
Therefore,
\begin{equation*}
\begin{aligned}
    & \quad J(K_{t+1}) - J(K_t) \\
    & = - \beta_t \Tr\left[ D_{K_t} \left( \widehat{G}_{K_t}\tp G_{K_t} + G_{K_t}\tp \widehat{G}_{K_t} - \beta_t \widehat{G}_{K_t}\tp (R + B\tp P_{K_t} B) \widehat{G}_{K_t} \right) \right] \\
    & = -\beta_t \Tr\left[ D_{K_t} \left( G_{K_t}\tp G_{K_t} + \widehat{G}_{K_t}\tp \widehat{G}_{K_t} - ( G_{K_t} - \widehat{G}_{K_t})\tp ( G_{K_t} - \widehat{G}_{K_t}) - \beta_t \widehat{G}_{K_t}\tp (R + B\tp P_{K_t} B) \widehat{G}_{K_t} \right) \right]
\end{aligned}
\end{equation*}
Recall that we proved
$$\sigma_{min}(D_{\ep}) I_{d} \le D_{K_t} \le c_D I_{d} ~~ \text{and} ~~ P_{K_t} \le c_P$$
in Lemma \ref{lem:bound}. Therefore, 
$$\Tr\left[ D_{K_t} G_{K_t}\tp G_{K_t} \right] \ge \sigma_{min}(D_{\ep}) \| G_{K_t} \|^2_F,$$
$$\Tr\left[ D_{K_t} \widehat{G}_{K_t}\tp \widehat{G}_{K_t} \right] \ge \sigma_{min}(D_{\ep}) \| \widehat{G}_{K_t} \|^2_F,$$
$$\Tr\left[ D_{K_t} \widehat{G}_{K_t}\tp (R + B\tp P_{K_t} B) \widehat{G}_{K_t} \right] \le c_D(\|R\| + c_P \|B\|^2) \| \widehat{G}_{K_t} \|^2_F,$$
and
$$\Tr\left[ D_{K_t} ( G_{K_t} - \widehat{G}_{K_t})\tp ( G_{K_t} - \widehat{G}_{K_t}) \right] \le c_D \| G_{K_t} - \widehat{G}_{K_t} \|^2_F.$$
Therefore,
\begin{align*}
    J(K_{t+1}) - J(K_t) & \le -\beta_t \sigma_{min}(D_{\ep}) (\|G_{K_t}\|_F^2 + \|\widehat{G}_{K_t}\|_F^2) + \beta_t c_D \|G_{K_t} - \widehat{G}_{K_t}\|_F^2\\
    & \quad + \beta_t^2 c_D(\|R\| + c_P \|B\|^2) \| \widehat{G}_{K_t} \|^2_F
\end{align*}
Finally, by Lemma \ref{lem:gradient_dominant}, we can conclude that 
\begin{align*}
    & J(K_{t+1}) - J(K_t) \le -\beta_t \dfrac{\sigma_{min}(D_{\ep})}{c_3} (J(K_t) - J(K^*))\\
    & \quad - \beta_t  \left[\sigma_{min}(D_{\ep}) - \beta_t c_D (\|R\| + c_P \|B\|^2) \right] \|\widehat{G}_{K_t}\|_F^2 + \beta_t c_D \|G_{K_t} - \widehat{G}_{K_t}\|_F^2
\end{align*}
\end{proof}
\subsubsection{Proofs for the main theorem}
Finally we can prove our main theorem.
\begin{proof}[Proof of Theorem \ref{thm:main1}]
By lemma \ref{lem:critic_gradient_accuracy}, \eqref{eq:error_grad} and \eqref{eq:critic_grad_bound} hold for all $t \le T$. We define a Lyapunov function
\begin{equation*}
    \mL_t = \mL(\theta_t, K_t) = \|\theta_t - \theta_{K_t}\|_F^2 + J(K_t) - J(K^*).
\end{equation*}
Firstly, $\mL_0 = \mO(1)$ because
$$ \|\theta_0 - \theta_{K_0}\|_F^2 = \| \theta_{K_0} \|_F^2 = \left\| \begin{bmatrix} Q + A\tp P_{K_0} A & A\tp P_{K_0} B \\ B\tp P_{K_0} A & R + B\tp P_{K_0} B \end{bmatrix} \right\|_F^2 = \mO(1) $$
(note that $P_{K_0} = Q + A\tp P_{K_0} A$ implies $\|P_{K_0}\|_F = \mO(1)$) and
$$J(K_0) - J(K^*) \le J(K_0) = \Tr(D_{\ep} P_{K_0}) + \sigma^2 \Tr(R) \le c_P \Tr[D_{\ep}] + \sigma^2 \Tr(R) = \mO(1).$$
Next, we want to show a decrease rate of the Lyapunov function. According to Lemma \ref{lem:critic_improvement} and Lemma \ref{lem:actor_improvement},
\begin{equation}\label{eq:Lyap_diff}
\begin{aligned}
    & \quad \EE \left[ \mL_{t+1} ~|~ \mG_t \right] - \mL_t  \\
    & \le - \frac43 \alpha_t \mu_{\sigma} \| \theta_t - \theta_{K_t} \|_F^2 + \frac14 \dfrac{\sigma_{min}(D_{\ep})}{c_3} \beta_t \ve + (\dfrac{3}{\alpha_t \mu_{\sigma}} + 2) \| \theta_{K_t} - \theta_{K_{t+1}} \|_F^2 \\
    & \quad -\beta_t \dfrac{\sigma_{min}(D_{\ep})}{c_3} (J(K_t) - J(K^*)) - \beta_t \left[\sigma_{min}(D_{\ep}) - \beta_t c_D (\|R\| + c_P \|B\|^2) \right] \|\widehat{G}_{K_t}\|_F^2 \\
    & \quad + \beta_t c_D \|G_{K_t} - \widehat{G}_{K_t}\|_F^2.
\end{aligned}
\end{equation}
Fortunately, we can use the negative term in the actor estimate to bound the positive term in the critic estimate and use the negative term in the critic estimate to bound the positive term in the actor estimate. Specifically, by Lemma \ref{lem:thetaK},
$$\| \theta_{K_t} - \theta_{K_{t+1}} \|^2_F \le c_1^2 \| K_t - K_{t+1} \|_F^2 = c_1^2 \beta_t^2 \| \widehat{G}_{K_t} \|_F^2.$$
So, by the second inequality in \eqref{eq:stepsizes}
\begin{equation}\label{eq:interbound1}
    \beta_t \left[\sigma_{min}(D_{\ep}) - \beta_t c_D (\|R\| + c_P \|B\|^2) \right] \|\widehat{G}_{K_t}\|_F^2 \ge (\dfrac{3}{\alpha_t \mu_{\sigma}} + 2) \| \theta_{K_t} - \theta_{K_{t+1}} \|_F^2.
\end{equation}
In addition,
$$\|G_{K_t} - \widehat{G}_{K_t}\|_F^2 = \| (\theta^{22}_{K_t} - \theta^{22}_t) K_t - (\theta^{21}_{K_t} - \theta^{21}_t) \|_F^2 \le c_K^2 \| \theta_t - \theta_{K_t} \|_F^2.$$
So, by the third inequality in \eqref{eq:stepsizes}
\begin{equation}\label{eq:interbound2}
    \frac13 \alpha_t \mu_{\sigma} \| \theta_t - \theta_{K_t} \|_F^2 \ge \beta_t c_D \|G_{K_t} - \widehat{G}_{K_t}\|_F^2.
\end{equation}
Substituting \eqref{eq:interbound1} and \eqref{eq:interbound2} into \eqref{eq:Lyap_diff}, we obtain
\begin{equation*}
\begin{aligned}
     & \quad \EE \left[ \mL_{t+1} ~|~ \mG_t \right] - \mL_t \\
     & \le - \alpha_t \mu_{\sigma} \| \theta_t - \theta_{K_t} \|_F^2 + \frac14 \dfrac{\sigma_{min}(D_{\ep})}{c_3} \beta_t \ve - \beta_t \dfrac{\sigma_{min}(D_{\ep})}{c_3} (J(K_t) - J(K^*)).
\end{aligned}
\end{equation*}
Taking expectation, we obtain
\begin{equation}\label{eq:Lyap_contraction}
    \EE[\mL_{t+1} - \mL_t] \le - \EE \left[ \alpha_t \mu_{\sigma} \| \theta_t - \theta_{K_t} \|_F^2 + \beta_t \dfrac{\sigma_{min}(D_{\ep})}{c_3} (J(K_t) - J(K^*)) \right] + \frac14 \dfrac{\sigma_{min}(D_{\ep})}{c_3} \beta_t \ve.
\end{equation}
Next, we consider three cases. The first case is when $\EE[\|\theta_t - \theta_{K_t}\|^2_F] \ge \frac12 \ve$. In this case, by \eqref{eq:Lyap_contraction} and the first inequality of \eqref{eq:stepsizes}, 
\begin{equation*}
    \EE[\mL_{t+1} - \mL_t] \le - \EE \left[ \frac13 \alpha_t \mu_{\sigma} \| \theta_t - \theta_{K_t} \|_F^2 + \beta_t \dfrac{\sigma_{min}(D_{\ep})}{c_3} (J(K_t) - J(K^*)) \right].
\end{equation*}
The second case is when $\EE[J(K_t) - J(K^*)] \ge \frac12 \ve$. In this case
\begin{equation*}
    \EE[\mL_{t+1} - \mL_t] \le - \EE \left[ \alpha_t \mu_{\sigma} \| \theta_t - \theta_{K_t} \|_F^2 + \frac12 \beta_t \dfrac{\sigma_{min}(D_{\ep})}{c_3} (J(K_t) - J(K^*)) \right].
\end{equation*}
In both the first and the second cases, we have
\begin{equation*}
    \EE[\mL_{t+1} - \mL_t] \le - \EE \left[ \frac13 \alpha_t \mu_{\sigma} \| \theta_t - \theta_{K_t} \|_F^2 + \frac12 \beta_t \dfrac{\sigma_{min}(D_{\ep})}{c_3} (J(K_t) - J(K^*)) \right].
\end{equation*}
Note that $\frac12 \beta_t \dfrac{\sigma_{min}(D_{\ep})}{c_3} \le \frac13 \alpha_t \mu_{\sigma}$, we obtain a contraction rate for the Lyaponov function in both cases:
\begin{equation*}
    \EE[\mL_{t+1} - \mL_t] \le  - \frac12 \beta_t \dfrac{\sigma_{min}(D_{\ep})}{c_3} \EE[\mL_t] =: - \beta_t c_4 \EE[\mL_t]
\end{equation*}
where we remind the reader that $L(\theta_{K^*}, K^*) = 0$. Let us rewrite it into a contraction form
\begin{equation}\label{eq:Lyap_contraction5}
    \EE[\mL_{t+1}] \le  (1 - \beta_t c_4) \EE[\mL_t].
\end{equation}
Next, we consider the third case, when both $\EE[\|\theta_t - \theta_{K_t}\|^2_F] < \frac12 \ve$ and $\EE[J(K_t) - J(K^*)] < \frac12 \ve$. In this case we have $\EE[ \mL_t ] < \ve$. Therefore, by \eqref{eq:Lyap_contraction}, we obtain
\begin{equation*}
\begin{aligned}
    & \quad \EE[\mL_{t+1}] \\
    & \le (1 - \alpha_t \mu_{\sigma}) \EE \left[ \| \theta_t - \theta_{K_t} \|_F^2 \right] + \frac14 \dfrac{\sigma_{min}(D_{\ep})}{c_3} \beta_t \ve + \left(1 - \beta_t \dfrac{\sigma_{min}(D_{\ep})}{c_3} \right) \EE \left[ (J(K_t) - J(K^*))\right]\\
    & < \frac12 \ve + \frac12 \ve \left(\frac12 \dfrac{\sigma_{min}(D_{\ep})}{c_3} \beta_t + 1 - \beta_t \dfrac{\sigma_{min}(D_{\ep})}{c_3} \right) < \ve.
\end{aligned}
\end{equation*}
Therefore, we have shown that under \eqref{eq:error_grad} and \eqref{eq:critic_grad_bound}, the Lyapunov function is decreasing at rate \eqref{eq:Lyap_contraction5} as long as $\EE[\|\theta_t - \theta_{K_t}\|^2_F] \ge \frac12 \ve$ or $\EE[J(K_t) - J(K^*)] \ge \frac12 \ve$, or else, the Lyapunov function will keep being smaller than $\ve$. Since $(1 - \beta_t c_4)^T \mL_0 < \ve$ (recall that $\beta_t$ is constant in $t$), we have $\EE[\mL_T] \le \ve$. Since $\EE[\mL_T]$ is the sum of two non-negative numbers, both of them are less than $\ve$.
\end{proof}

\vskip 0.2in
\bibliography{ref}

\end{document}